\documentclass[a4paper,11pt,oneside]{scrartcl}
\usepackage[utf8]{inputenc}
\usepackage[T1]{fontenc}
\usepackage{lmodern}
\usepackage[english]{babel}
\usepackage{amssymb}
\usepackage{amsmath}
\usepackage{amsthm}
\usepackage[dvipsnames,svgnames,table]{xcolor}
\usepackage[colorlinks=true,linkcolor=RoyalBlue,citecolor=PineGreen,urlcolor=RoyalBlue]{hyperref}
\usepackage[paper=a4paper,left=25mm,right=25mm,top=25mm,bottom=25mm]{geometry}
\usepackage{graphicx}

\usepackage{float}
\usepackage{makecell}
\usepackage{csquotes}
\usepackage{caption}
\usepackage{booktabs}
\usepackage{tikz}
\usetikzlibrary{calc}
\usetikzlibrary{decorations.pathreplacing, decorations.markings}
\usetikzlibrary{shapes.misc}
\usepackage{thmtools, thm-restate}
\usepackage[nameinlink]{cleveref}

\usepackage{algpseudocode}
\usepackage{algorithm}
\usepackage{algcompatible}

\definecolor{colR}{HTML}{CC6677}
\definecolor{colB}{HTML}{6699CC}
\colorlet{colG}{DarkSeaGreen}
\definecolor{colO}{HTML}{DDCC77}
\colorlet{colbg}{white}
\colorlet{colfg}{black}
\colorlet{colgraphv}{colfg!75!colbg}
\colorlet{colgraphe}{colfg!65!colbg}

\tikzstyle{vertex}=[circle, draw=colbg, fill=colgraphv, inner sep=0pt, minimum size=7pt]
\tikzstyle{edge}=[line width=1.5pt,colgraphe]
\tikzstyle{labelsty}=[font=\scriptsize]
\tikzstyle{crossvertex}=[cross out, draw,minimum size = 10pt, line width=1pt]
\tikzstyle{genericgraph}=[dashed,black!70!white]
\tikzstyle{rededge}=[edge, color=colR]

\theoremstyle{definition}
\newtheorem{definition}{Definition}

\newtheorem{theorem}[definition]{Theorem}
\newtheorem{proposition}[definition]{Proposition}
\newtheorem{lemma}[definition]{Lemma}
\newtheorem{r-lemma}{R-Lemma}
\newtheorem{si-lemma}{SI-Lemma}
\newtheorem{corollary}[definition]{Corollary}

\newtheorem*{theorem*}{Theorem}

\theoremstyle{remark}

\newcommand{\Z}{\mathbb{Z}}

\DeclareMathOperator{\mindeg}{mindeg}
\DeclareMathOperator{\Iso}{Iso}
\DeclareMathOperator{\iso}{iso}
\DeclareMathOperator{\sur}{sur}
\DeclareMathOperator{\pot}{pot}
\DeclareMathOperator{\gsdeg}{sdeg}
\DeclareMathOperator{\pccgsdeg}{psdeg}
\DeclareMathOperator{\pccpot}{pccpot}
\newcommand{\surp}[3]{\sur_{#1}(#2,#3)}

\newcommand{\nauty}{\textsc{Nauty}}
\newcommand{\geng}{\textsc{Geng}}
\newcommand{\pickg}{\textsc{Pickg}}
\newcommand{\pcc}{PCC}
\newcommand{\pccgs}{\pcc\ game state}

\newcommand{\gsix}[1]{\texttt{#1}}

\algrenewcommand{\algorithmiccomment}[1]{\hfill$\triangleright$ #1}
\newcommand{\lineref}[1]{\hyperref[#1]{Line~\ref{#1}}}

\newcommand{\unsure}{\texttt{Unsure}}

\pgfkeys{/pgf/number format/.cd,
int detect,
/pgf/number format/set thousands separator={\,}}

\Crefformat{equation}{#2(#1)#3}

\title{The minimum degree question for the Maker Breaker Domination Game}
\date{}
\usepackage{authblk}

\author[1]{Jakob Führer}
\author[2]{Georg Grasegger}
\author[3]{Paul Hametner}
\author[1]{Oliver Roche-Newton}
\affil[1]{Johannes Kepler University Linz, Institute for Algebra}
\affil[2]{Johann Radon Institute for Computational and Applied Mathematics, Austrian Academy of Sciences}
\affil[3]{Institute of Science and Technology, Austria}

\begin{document}
\maketitle
\begin{abstract}
    The Maker Breaker Domination Game is a two player game played on a graph $G$ in which the players take turns to claim a vertex from the graph. The aim of the Dominator is to claim the vertices of a dominating set, and the aim of the Staller is to prevent this. In this paper, we consider the following problem: for a given integer $d$, what is the size of the smallest (with respect to the number of vertices) graph with minimum degree $d$ such that the Dominator loses going first? We write $\beta(d)$ to denote the answer to this question. We determine the precise value of $\beta(d)$ for $d\leq 3$. For general $d$ it was known that $2^{d+1} \leq \beta(d) \leq 2^{d+1}+2d$; the upper bound is due to a construction communicated to us by Valentin Gledel, while the lower bound follows from a simple application of the Erd\H{o}s-Selfridge Theorem. We improve the lower bound to $\beta(d) \geq 2^{d+1}+2$.
\end{abstract}

\section{Introduction}\label{sec:introduction}

A Maker-Breaker game is a two-player positional game. The rules of the game are as follows. We are given a ground set $X$ and a family $\mathcal F \subset \mathcal P(X)$ of \textit{winning sets}. The players take turns to move, and each move consists of the player claiming an element of $X$. Once an element has been claimed by a player, it cannot be claimed again. The goal of the Maker is to claim all of the elements of a winning set $F \in \mathcal F$, while the goal of the Breaker is to prevent this from happening. One can consider the situations where the Maker or the Breaker has the first move, and moving this advantage from one player to another has the potential to significantly change the outcome of the game. See \cite{HKSS} for more background on Maker-Breaker games, as well as the more general topic of Positional Games.

In this paper, we study the Maker-Breaker Domination game on a finite graph $G=(V,E)$. This game was introduced by Duchêne, Gledel, Parreau and Renault \cite{DGPR}, based closely on earlier work of Bre\v{s}ar, Klav\v{z}ar and Rall \cite{BKR}. Recall that $U \subset V$ is a \textit{dominating set} if
\[
    \forall \, v \in V \setminus U, \, \exists \, u \in U \text{ such that } uv \in E.
\]
The two players in the Maker-Breaker Domination game are called the \textit{Dominator} and the \textit{Staller}, and the players take turns to choose a previously unclaimed vertex from $V$. The goal of the Dominator is to claim all of the elements of a dominating set, and the goal of the Staller is to stop the Dominator from achieving this. One typically thinks of the Dominator as the Maker and the Staller as the Breaker, in which case, the family of winning sets is the family $\mathcal F$ consisting of all dominating sets. It is also possible to view the Staller as the Maker by adjusting the definition of $\mathcal F$ suitably, as we see later in this paper. The game ends when either the Dominator has claimed a dominating set (in which case the Dominator wins), or when all vertices have been claimed and the Dominator has not claimed a dominating set (in which case the Staller wins). 

In particular, each game has exactly one winner. The following table is used to introduce the shorthand notation that we frequently use to describe the outcome on a given graph $G$.
\begin{center}
    \begin{tabular}{p{2cm}p{11cm}}
        \toprule
        Notation & Meaning \\
        \midrule
        $o(G)= \mathcal D$ & The Dominator has a strategy which ensures victory going second. \\
        $o(G)= \mathcal N$ & The Dominator has a strategy which ensures victory going first, and the Staller has a strategy which ensures victory going first. \\
        $o(G)= \mathcal S$ & The Staller has a strategy which ensures victory going second. \\
        \bottomrule
    \end{tabular}
\end{center}

It was shown in \cite{DGPR} that each graph $G$ has exactly one of these three possible outcome statuses. For ease of language and explanation, we shorten the phrasing above. For example, we say that \textit{the dominator wins going second} if $o(G)= \mathcal D$. We use the notation $o(G) \geq \mathcal N$ to say that the Dominator wins going first (while giving no information about who wins when the Dominator has the second move) and the notation $o(G) \leq \mathcal N$ is used to describe the case when the Staller wins going first (while giving no information about who wins when the Staller has the second move).

For example, it is not difficult to show that $o(C_n) \geq \mathcal N$ for all $n \geq 3$. Indeed, one can show this with a \textit{pairing strategy} (see also \cite{DGPR}).
If $n$ is even, then the Dominator can pair off neighbouring vertices of the cycle and proceed to ensure that at least one element from each pair is claimed.
If $n$ is odd, then the Dominator chooses a first vertex arbitrarily and then has a similar pairing strategy for the remaining vertices (see \Cref{fig:C7}).
Moreover, it is not much more difficult to check that the Dominator wins on $C_n$ even going second (see also \cite{DGPR}).

\begin{figure}[ht]
    \centering
    \begin{tikzpicture}
        \foreach \c [count=\i] in {colO,colO,,colR,colR,colB,colB}
        {
            \node[vertex,fill=\c] (v\i) at ({360/7*(\i+4)+90}:2) {};
        }
        \foreach \i [remember=\i as \io (initially 7)] in {1,2,...,7}
        {
            \draw[edge] (v\i)--(v\io);
        }
    \end{tikzpicture}

    \caption{This colouring of a $C_7$ shows a pairing strategy to ensure the Dominator wins on this graph. After choosing the black vertex on the first move, the remaining moves are direct responses to the previous Staller move. If the Staller chooses a red vertex, the Dominator replies by choosing the other red vertex, and the same for the other colours. All vertices are guaranteed to be dominated via this strategy, and so we can write $o(C_7) \geq \mathcal N$.}
    \label{fig:C7}
\end{figure}

On the other hand, consider the graph $G=P_3 \cup P_3$ formed by two disjoint copies of $P_3$. The Staller has a winning strategy for this graph going second (see \Cref{fig:p3p3}). We therefore write $o(P_3 \cup P_3)= \mathcal S$.

\begin{figure}[ht] 
    \centering
    \begin{tikzpicture}[]
      \node[vertex,label={[labelsty]above:$u_1$}] (a1) at (0,0) {};
      \node[vertex,label={[labelsty]above:$u_2$}] (a2) at (2,0) {};
      \node[vertex,label={[labelsty]above:$u_3$}] (a3) at (4,0) {};
    
      \draw[edge] (a1) -- (a2) (a2) -- (a3);
    
      \node[vertex,label={[labelsty]below:$v_1$}] (b1) at (0,-1.5) {};
      \node[vertex,label={[labelsty]below:$v_2$}] (b2) at (2,-1.5) {};
      \node[vertex,label={[labelsty]below:$v_3$}] (b3) at (4,-1.5) {};
    
      \draw[edge] (b1) -- (b2) (b2) -- (b3);
    \end{tikzpicture}
    
    \caption{The Staller wins on $P_3+P_3$ going second as follows. After the Dominator chooses a first vertex (without loss of generality, we may assume it is a vertex $u_i$), the Staller responds by choosing $v_2$. For the second move, the Staller chooses whichever of $v_1$ and $v_3$ is unclaimed. This traps one of the end vertices of the second path, and ensures that there is a vertex which is not dominated by the Dominator.}
    \label{fig:p3p3}
\end{figure}

A natural and general problem is to classify the graphs which have each possible outcome. However, obtaining such a classification appears to be a very difficult problem; determining the outcome of the game on a given graph is PSPACE-complete (see \cite{DGPR}). The examples above give some indication that the outcome of the Maker-Breaker Domination game is heavily dependent on the structure (i.e. the edge distribution) of the graph $G$. In particular, it appears that well-connected graphs offer an advantage to the Dominator. Meanwhile, low degree vertices are helpful for the Staller. The main aim of this paper is to study the relationship between the minimum degree of a graph $G$ and the outcome of the game on $G$. Let $\delta(G)$ denote the minimum degree of $G$.

A naive first question to ask is: do there exist graphs $G$ with arbitrarily large minimum degree $d$ such that $o(G)= \mathcal S$? The answer to this question is yes, and a beautiful construction of Gledel (private communication) shows that, for every $d \in \mathbb N$, there exists a graph $G$ with $2^{d+1}+2d$ vertices such that the Staller wins going second. See \Cref{sec:g-construction} for more details concerning this construction. On the other hand, it was observed in \cite{DGPR} that the Erd\H{o}s-Selfridge Theorem (see \Cref{cor:ES}) implies that the Dominator wins going first on any graph with minimum degree $d$ and fewer than $2^{d+1}$ vertices.

The question that we seek to answer in this paper is the following; what is the size (in terms of the number of vertices) of the smallest graph with minimum degree $d$ such that the Staller wins going first. Let $\beta(d)$ denote the answer to this question, that is,
\[
    \beta(d):= \min \{ n \in \mathbb N:  \exists \,  G=(V,E), \,|V|=n, \delta(G)=d,\, o(G)= \mathcal S \}.
\]
The previous paragraph gives
\begin{equation} \label{betarange}
    2^{d+1} \leq \beta(d) \leq 2^{d+1}+2d.  
\end{equation}The aim of this paper is to determine the precise value of $\beta(d)$ for some small values of $d$. Our main result shows that the upper bound in \Cref{betarange} is optimal for $0 \leq d \leq 3$.

\begin{theorem} \label{thm:beta} 
    $\beta(0)=2$, $\beta(1)=6$, $\beta(2)=12$, $\beta(3)=22$.
\end{theorem}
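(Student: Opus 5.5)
Each equality splits into an upper bound and a lower bound. The upper bounds $\beta(d)\le 2^{d+1}+2d$ come from Gledel's construction in \Cref{sec:g-construction}, which gives $2,6,12,22$ for $d=0,1,2,3$. For $d=0$ the graph is $2K_1$; for $d=1$ we would like $P_3\cup P_3$, which has $6$ vertices and, by \Cref{fig:p3p3}, $o=\mathcal S$. So the real content is the lower bounds: every graph with $\delta(G)=d$ and fewer than $2^{d+1}+2d$ vertices satisfies $o(G)\ge\mathcal N$. The Erd\H{o}s--Selfridge bound (\Cref{cor:ES}) already handles graphs with fewer than $2^{d+1}$ vertices. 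For $d=0$, a one-vertex graph is trivially won by the Dominator going first, so $\beta(0)=2$.

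For $d=1$ we must rule out graphs on $4$ and $5$ vertices. Here I would do a direct case analysis on a spanning forest or tree structure. If some component has a Hamiltonian-type pairing, we use a pairing strategy. If $G$ has a vertex adjacent to two leaves (a cherry), the Dominator claims the centre first and pairs the rest. With at most $5$ vertices and no isolated vertex there are few components: either $G$ is connected, or it is $P_2\cup(\text{graph on }\le 3)$. In each case the Dominator's first move plus pairings of the form $\{v,N(v)\}$ suffices.

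For $d=2$ (sizes $8$–$11$) and $d=3$ (sizes $16$–$21$), hand analysis is infeasible. I would therefore combine structural reduction lemmas with a computer search. The structural step is to show that a minimal counterexample may be assumed edge-minimal. Deleting edges only helps the Staller, so we may restrict to graphs in which every edge has an endpoint of degree exactly $d$. Further reductions would say that a vertex with a closed neighbourhood strictly containing another's, or a pair of degree-$d$ vertices with nested closed neighbourhoods, can be handled by forced moves. These shrink the family considerably. We then enumerate the remaining graphs with \geng{} (minimum degree $d$, edge-critical) and evaluate each one with a game solver. The solver should be a depth-first search over game states with memoisation under automorphisms (via \nauty{}). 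It should also use pruning rules: the Staller wins immediately once she has claimed a closed neighbourhood, and the Dominator's forced responses are played whenever an unclaimed vertex has only one free vertex left in its closed neighbourhood. For speed, a potential-function (Erd\H{o}s--Selfridge-style) heuristic can certify Dominator wins on most states, with full search used only on the \unsure{} cases.

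The main obstacle is $d=3$ at $20$ and $21$ vertices, where the number of graphs with minimum degree $3$ is astronomically large. The plan there is to push the reductions hardest. First, we may assume every vertex has degree $3$ or is adjacent only to degree-$3$ vertices. Second, we use a refined Erd\H{o}s--Selfridge count: the Staller's winning sets are the closed neighbourhoods, each of size at least $4$, and the potential argument shows that a Staller win on $n<2^{d+1}+2d$ vertices forces many overlapping small neighbourhoods. This should restrict attention to near-cubic graphs with specific local structure, which can then be generated and checked exhaustively.
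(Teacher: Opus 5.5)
Your upper bounds, the Erd\H{o}s--Selfridge cut-off at $2^{d+1}$ vertices, the small cases $d\le 1$, the reduction to graphs in which every edge has an endpoint of degree $d$, and brute-force enumeration with a game solver for $d=2$ all match what the paper does. Your hand sketch for $d=1$ is thin, but it is a routine finite check.

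The gap is $\beta(3)\ge 22$, which is the real content of the theorem. There you describe a hope that reductions will make exhaustive search feasible, not an argument. The nested-neighbourhood and forced-move reductions are neither proved nor quantified. The ``refined Erd\H{o}s--Selfridge count'' cannot do what you want of it: even applied after one or two moves, that criterion only sees the sizes of the winning sets, i.e.\ the degrees. For a near-cubic graph on $16\le n\le 21$ vertices the potential is roughly $n/16\ge 1$ whatever its local structure, and claiming a vertex of moderate degree does not push it below $\tfrac12$. So potential arguments can prune degree sequences (the paper's two-step lemma does exactly this, for large surplus). They force no local structure on near-regular graphs, and those are precisely what survives. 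You also give no reason to think that this surviving family on $20$--$21$ vertices is small enough to enumerate and solve graph by graph.

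The missing idea is a tool that works where Erd\H{o}s--Selfridge fails. The paper uses a Tutte-type obstruction.
\begin{itemize}
\item \textbf{$v$-factors.} Suppose $G$ has a $v$-factor: a star centred at a maximum-degree vertex $v$ together with a perfect $[1,2]$-factor of the remaining vertices. Then the Dominator wins going first by claiming $v$ and then pairing. The only obstruction to a $v$-factor is an RSI-decomposition, i.e.\ a set $S$ with $I=\Iso(G-S)\setminus\{v\}$ and $|I|>|S|$.
\item \textbf{Low surplus.} A counting identity shows that if the surplus is at most $8$, then at most $5$ edges join $R$ and $S$. So $G$ essentially splits into $G[R]$ and a bipartite graph on $S\cup I$. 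Each piece is small enough to be handled by separate computer checks (the R- and SI-lemmas).
\item \textbf{High surplus.} For surplus at least $9$, the two-step lemma leaves $118$ degree sequences. Instead of full graphs, the paper enumerates only the bipartite cores (edges between degree-$3$ and higher-degree vertices). It certifies all completions of a core at once by a potential-pruned search.
\end{itemize}
Without a decomposition of this kind for the low-surplus case, and some way of avoiding enumeration of full graphs in the high-surplus case, your plan does not establish $\beta(3)=22$.
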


The proofs that $\beta(0)=2$ and $\beta(1)=6$ are straightforward and can be done by hand. Proving that $\beta(2)=12$ could also still be done by hand, however, the computational tools we develop in order to determine $\beta(3)$ deal with $\beta(2)$ quite straightforwardly, and the manual proof requires quite some work, so we give only the computer assisted proof that $\beta(2)=12$.

We develop some theory in this paper which gives some structural information about graphs for which $o(G)= \mathcal S$, namely that such graphs have what we call an \textit{RSI decomposition}, see \Cref{section:RSI_theory}. The proof of $\beta(3)=22$ does then heavily rely on computer-aided computations. Breaking up graphs into its \textit{RSI decomposition} allows us to greatly reduce the required computing resources.

One may notice that all values of $\beta(d)$ that are determined by \Cref{thm:beta} align with the upper bound from \eqref{betarange}, which does not rule out the possibility that Gledel's construction gives the best possible bound for all $d$. On the other hand, we are able to give a very small improvement to the trivial lower bound from \eqref{betarange}. The lower bound $\beta(d) \geq 2^{d+1}+1$ was implicit in the work of \cite{BDGLP}, and this bound can be nudged forward slightly.

\begin{restatable}{theorem}{plusone}\label{thm:plusone} 
    For all $d \in \mathbb N$,
    $\beta(d) \geq 2^{d+1}+2$.
\end{restatable}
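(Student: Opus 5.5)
We prove the contrapositive: if $\delta(G)=d$ and $|V|\le 2^{d+1}+1$, then $o(G)\ge\mathcal N$. The plan is to run the Erd\H{o}s--Selfridge potential argument behind \Cref{cor:ES} explicitly, in Breaker-first form, and squeeze one more unit of slack out of it than the corollary gives.

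\textbf{Potential and pairing.} We view the Staller as Maker, whose winning sets are the closed neighbourhoods $N[v]$, each of size at least $d+1$. At any moment, define
\[\Phi=\sum_{v\,:\,N[v]\text{ contains no Dominator vertex}} 2^{-u(v)},\]
where $u(v)$ is the number of unclaimed vertices of $N[v]$. Initially $\Phi_0\le |V|\,2^{-(d+1)}\le 1+2^{-(d+1)}$. The Dominator always claims a vertex $b$ of maximum current weight $w(b)=\sum_{v:\,b\in N[v]\text{ alive}}2^{-u(v)}$.

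We group the moves into pairs, each consisting of a Dominator move followed by the Staller's reply $s$. The change of $\Phi$ over such a pair is
\[-w(b)+w(s)-\sum_{v:\,b,s\in N[v]\text{ alive}}2^{-u(v)}\le 0 .\]
Here $w(s)$ is the weight measured before $b$ was claimed, so $w(s)\le w(b)$ by the greedy choice. Equality in a pair forces two things: $w(s)=w(b)$, and no live neighbourhood contains both $b$ and $s$. At the end of the game every live set has $u=0$, so the final $\Phi$ is exactly the number of undominated vertices. The Staller wins if and only if this number is at least $1$.

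\textbf{Case split on parity.} If $|V|$ is odd, all moves are paired, since the Dominator moves first and last. Hence the final $\Phi$ is at most $\Phi_0<2$, so the Staller wins only if the final $\Phi$ equals exactly $1$. If $|V|\le 2^{d+1}$ is even, the Staller makes one unpaired last move $s^*$. Its increment is at most the sum over live $N[v]\ni s^*$ with $u(v)=1$, each term contributing at most $1/2$. I would handle this case by the same slack-counting argument as below; alternatively I would first show $\Phi_0<1$ unless $G$ is $d$-regular on exactly $2^{d+1}$ vertices, and then treat that regular case directly. The aim in both cases is to show that the total decrease accumulated over the game exceeds the excess $\Phi_0-1\le 2^{-(d+1)}$ (respectively the unpaired increment).

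\textbf{Slack from the first move.} The key gain comes at the start. The Dominator's first move $x$ kills all $\deg(x)+1\ge d+1$ sets $N[v]$ with $v\in N[x]$. In the paired accounting, this first pair already loses $w(x)\ge (d+1)2^{-(d+1)}$ minus the Staller's reply gain $w(s)\le w(x)$. The loss is strict unless the Staller replies with a vertex of equal weight sharing no neighbourhood with $x$, that is, at distance at least $3$ from $x$. More generally, every pair in which the Staller plays within distance $2$ of the Dominator's last vertex drops $\Phi$ by at least $2^{-u}\ge 2^{-(d+1)}$ for some live set. This is already enough to push $\Phi$ below $1$ in the odd case. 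Thus the Staller must, on every single turn, answer with an equal-weight vertex at distance at least $3$ from the Dominator's previous vertex.

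\textbf{Main obstacle: the equality case.} The hardest step is showing that this rigid behaviour cannot be sustained until the end of the game. The equality conditions force all closed neighbourhoods to have size exactly $d+1$, so $G$ is $d$-regular, and they force $|V|=2^{d+1}+1$. A $d$-regular graph on an odd number of vertices requires $d$ to be even. Beyond that parity constraint, I would attack this step by exploiting the Dominator's freedom among tied maximum-weight vertices. The first attempt would be to choose $b$ adjacent to, or at distance $2$ from, an unclaimed vertex that the Staller must eventually take. Near the end of the game few vertices remain, so the Staller's required distance-$3$ reply does not exist, and one strict decrease of size at least $2^{-(d+1)}$ follows, giving a final $\Phi<1$. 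Making this endgame counting precise, together with the analogous treatment of the unpaired last Staller move in the even case, is where I expect most of the work to lie.
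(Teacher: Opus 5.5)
\textbf{The proposal has a genuine gap: it never proves that the Dominator wins in the near-extremal cases, which is the entire content of the theorem beyond the Erd\H{o}s--Selfridge bound $\beta(d)\ge 2^{d+1}$.} Your reduction to the cases with $\Phi_0\ge 1$ is fine. Concretely, these cases are:
\begin{itemize}
\item $d$-regular graphs on $2^{d+1}$ vertices, where $\Phi_0=1$;
\item graphs on $2^{d+1}+1$ vertices with degree sequence $(d,\dots,d,D)$, $D\ge d$, where $\Phi_0=1+2^{-(D+1)}$;
\item graphs on $2^{d+1}+1$ vertices with degree sequence $(d,\dots,d,d+1,d+1)$, where $\Phi_0=1$.
\end{itemize}
So your claim that the equality conditions force $d$-regularity on $2^{d+1}+1$ vertices is wrong.

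For these cases you only offer a hope that a suitably tie-broken greedy Dominator eventually forces a strict loss. The heuristic behind that hope is unfounded. The Staller's loss-free condition is not ``distance at least $3$'' but that no \emph{live} closed neighbourhood contains both $b$ and $s$. That condition gets easier, not harder, to meet as neighbourhoods die, so there is no reason her admissible replies run out near the end.

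Several intermediate claims are also incorrect:
\begin{itemize}
\item With the Dominator first and $|V|$ even, the moves split into complete (Dominator, Staller) pairs, so there is no unpaired Staller move. For $|V|$ odd, the unpaired move is the Dominator's last one, which is harmless.
\item A single drop of $2^{-(d+1)}$ from $\Phi_0=1+2^{-(d+1)}$ only gives $\Phi\le 1$, not $\Phi<1$. So the Staller is not forced to be loss-free on every turn.
\item A reply within distance $2$ of $b$ costs her nothing if every closed neighbourhood containing both vertices is already dead.
\end{itemize}

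\textbf{The missing idea is a structural pairing certificate, which is what the paper uses instead of a sharper potential argument.} Erd\H{o}s--Selfridge only serves to pin down the degree sequences above.

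For $n=2^{d+1}$, the graph is regular, and a regular graph satisfies Tutte's condition, since $d\cdot\iso(G-S)=e(S,\Iso(G-S))\le d|S|$. Hence it has a perfect $[1,2]$-factor, and the Dominator wins even going second by pairing.

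For $n=2^{d+1}+1$, take $v$ of maximum degree. If $G$ has a $v$-factor, the Dominator claims $v$ and then pairs on the $[1,2]$-factor of the rest. By \Cref{proposition:v-factor_RSI-decomp}, the only obstruction to a $v$-factor is an RSI-decomposition $(R,S,I)$ with $v\in R$, $I=\Iso(G-S)\setminus\{v\}$ and $|I|>|S|$. At most one vertex other than $v$ has degree above $d$, and that one has degree exactly $d+1$. Double counting therefore gives
\[
d(|S|+1)\le d|I|\le e(S,I)\le\sum_{s\in S}\deg(s)\le d|S|+1,
\]
which forces $d\le 1$.

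The paper simply quotes the computed values for $d\le 2$. Note also that the statement needs $d\ge 1$, since $\beta(0)=2$.
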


We also study the same problem for the case when the Staller has the first move. To this end, define
\[
    \beta'(d):= \min \{ n \in \mathbb N:  \exists \,  G=(V,E), \,|V|=n, \delta(G)=d,\, o(G) \leq  \mathcal N \}.
\]

It turns out $\beta'(d+1)$ can be easily calculated using $\beta(d)$. We prove the following result.

\begin{theorem} \label{thm:beta'+1}
    For all integers $d \geq 0$,
    \[
        \beta'(d+1)= \beta(d)+1.
    \]
\end{theorem}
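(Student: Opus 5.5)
The equality splits into two inequalities, $\beta'(d+1)\le\beta(d)+1$ and $\beta'(d+1)\ge\beta(d)+1$. Both compare a graph $G$ with $\delta(G)=d$ to a graph $G'$ obtained by adding or deleting one vertex $v$. Throughout I use the monotonicity fact that deleting edges from a graph can only help the Staller, since a dominating set of a spanning subgraph is dominating in the supergraph.

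\emph{Upper bound.} Let $G$ have $\beta(d)$ vertices, $\delta(G)=d$ and $o(G)=\mathcal S$. Form $G'$ by adding a vertex $u$ adjacent to every vertex of $G$. Then every old vertex has degree at least $d+1$, with equality at the old minimum, and $\deg u=\beta(d)\ge d+1$, so $\delta(G')=d+1$. Going first, the Staller claims $u$ and then follows his second-player winning strategy on $G$. At the end some vertex $w\in V(G)$ has no Dominator vertex in $N_G[w]$. Because $u$ belongs to the Staller, $w$ is undominated in $G'$ as well. Hence $o(G')\le\mathcal N$ and $\beta'(d+1)\le\beta(d)+1$.

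\emph{Lower bound.} Let $G'$ satisfy $\delta(G')=d+1$ and $o(G')\le\mathcal N$, and let $v$ be the Staller's winning first move. Set $G=G'-v$, so $\delta(G)\ge d$. I want to show $o(G)=\mathcal S$. Once this is done, I delete edges one at a time at a vertex of current minimum degree, lowering the minimum degree by exactly one each step until it equals $d$. By monotonicity the outcome stays $\mathcal S$, so $\beta(d)\le|V(G')|-1$.

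To prove $o(G)=\mathcal S$, suppose instead the Dominator has a first-player winning strategy $\tau$ on $G$. I would build a Dominator strategy in $G'$ after the Staller has taken $v$, contradicting the choice of $v$. The Dominator plays $\tau$ on $G'-v$. At the end his set dominates every vertex of $G$ within $G$, and hence within $G'$. The only possible failure is $v$ itself, which is undominated exactly when the Staller has claimed all of $N(v)$, a set of at least $d+1\ge1$ vertices.

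So the Dominator must additionally guarantee one vertex of $N(v)$. My plan is imaginary-move bookkeeping. The Dominator follows $\tau$, but if the Staller's move leaves exactly one unclaimed vertex of $N(v)$ and he owns none of $N(v)$, he claims that vertex instead. He then treats it as an extra free vertex, which never hurts him. If $\tau$ later asks for a vertex he already owns, he plays arbitrarily.

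\emph{Main obstacle.} The deviating move means the vertex $\tau$ wanted on that turn may be stolen by the Staller. So I must show that $\tau$ can be chosen robustly, or that this clash cannot happen. The first thing I would try is to use the freedom in choosing $v$. If that fails, I would look for a stronger induction hypothesis, for instance working with positions in which some vertices are already assigned to each player, so that the clash is absorbed.
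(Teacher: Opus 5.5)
\textbf{Upper bound.} Your upper bound is correct, and slightly more direct than the paper's, which goes through \Cref{lemma:delete_huge_deg_vertex_if_Staller_goes_first}.

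\textbf{Lower bound.} Here there is a genuine gap. Everything hinges on showing $o(G'-v)=\mathcal S$, and you leave exactly this step open. Moreover, for a given winning first move $v$ the claim is false in general.

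Take $G'$ on $z,\ell_1,\ell_2,\ell_3,v,x,y$ with edges $z\ell_1,z\ell_2,z\ell_3,\ell_1v,vx,xy$, so $\delta(G')=1$ and $d=0$. After the Staller claims $v$, she wins:
\begin{itemize}
\item if the Dominator does not answer with $z$, she takes $z$ and has at least two threats among $\ell_1,\ell_2,\ell_3$ (note $N[\ell_1]=\{\ell_1,z,v\}$);
\item if he takes $z$, she takes $x$ and threatens both $y$ and $v$.
\end{itemize}
Yet $G'-v=K_{1,3}\cup K_2$ has outcome $\mathcal N$. Your deviation rule breaks exactly as you feared: after $z$ and the Staller's $x$, the rule demands $\ell_1$ while $\tau$ demands $y$. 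In this example the choice $z$ instead of $v$ would work. However, you give no argument that a good choice always exists, and the paper proves no such structural statement.

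\textbf{What the paper does instead.} The paper never compares with $G'-v$ in general. To show $\beta'(d+1)>\beta(d)$ it only needs graphs $G'$ with $n\le\beta(d)$ vertices, and it splits by $d_1=\deg(v)$.
\begin{itemize}
\item If $d_1\ge n/2$, the Staller ends the game owning only $\lceil n/2\rceil-1<d_1$ vertices besides $v$. So she can never claim all of $N(v)$, hence $o_{\mathcal D}(G'-v)=o_{\mathcal D}(G',\emptyset,\{v\})$, and your clash cannot occur.
\item If $d_1<n/2$, the paper applies Erd\H{o}s--Selfridge directly to $(G',\emptyset,\{v\})$. Using $n\le\beta(d)\le 2^{d+1}+2d$ from Gledel's construction, it gets $\pot(G',\emptyset,\{v\})\le\frac12+\frac{d_1+2d-1}{2^{d+2}}+2^{-d_1}<1$.
\end{itemize}
This estimate only suffices for $d\ge 3$. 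For $d\in\{1,2\}$ the lower bound is established by exhaustive computer search (\Cref{proposition:tiny_betas}).

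So what is missing is the counting argument for high-degree $v$, and the use of the smallness of $G'$ for low-degree $v$. A size-independent strategy-transfer argument of the kind you envisage would be a new structural result not contained in the paper; in particular it would make the computer search unnecessary. Nothing in your proposal indicates how to obtain it.
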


\section{Notation and Preliminaries}

In this paper all graphs are finite and simple, that is, they are undirected and have no multiple edges or loops. For a graph $G = (V,E)$, we denote by $V(G):=V$ and $E(G) := E$ the vertex set and edge set of $G$, respectively. For two distinct vertices $a, b \in V(G)$, we denote by $ab = ba := \{a,b\}$ the edge between $a$ and $b$, and furthermore, if $A, B \subseteq V(G)$, then $E_G(A,B) = \{ab \in E(G) : a \in A, b\in B\}$ and $e_G(A,B) = |E_G(A,B)|$. In case $A = B$, we use the shorthand notation $E_G(A) = E_G(A,A)$ and $e_G(A) = e_G(A, A)$. In case $G$ is known from context, we drop the subscript $G$. We write $G = (A, B, E)$ to mean that the graph $G = (A \cup B, E)$ is bipartite with bipartition $(A, B)$, that is, $A,B \subseteq V(G)$ partition the vertex set $V(G)$ and $E = E(G) = E(A,B)$.

For a vertex $v \in V(G)$, denote by $\deg_G(v)$, $N_G(v)$, and $N_{G}[v] = N_G(v) \cup \{v\}$ the degree, the neighbourhood, and the closed neighbourhood of $v$, respectively. In case $G$ is clear from context, we drop the corresponding subscript in these notations. If $V(G) = \{v_1,\dots, v_n\}$ with $n = |V(G)|$ such that $\deg(v_1) \leq \dots \leq \deg(v_n)$, then $(\deg(v_1), \dots, \deg(v_n))$ is the \emph{degree sequence} of $G$. If $\deg(v_1) = \deg(v_n) = d$, then the graph $G$ is called $d$-regular. The minimum degree of $G$ is denoted by $\mindeg(G) = \deg(v_1)$. A vertex of degree zero is called \emph{isolated}, and the set of all isolated vertices of a graph $G$ is denoted by $\Iso(G)$. Again, we use the lower case $\iso(G) = |\Iso(G)|$ to denote the number of isolated vertices in $G$. 

A graph $G' = (V',E')$ is a \emph{subgraph} of the graph $G = (V,E)$ if $V' \subseteq V$ and $E' \subseteq E$. In case $V' = V$, we say that $G'$ is a \emph{spanning subgraph} of $G$. A \emph{connected component} of $G$ is a maximal set $C\subseteq V$ where every pair $a,b \in C$ of vertices is connected by a path. For a subset $A \subseteq G$, the \emph{induced subgraph of $G$ on the set $A$} is the subgraph of $G$ with vertex set $A$ and maximal edge set. It is denoted by $G[A]$, and in case $A = V(G) \setminus \{v\}$ for some $v \in V(G)$, we also write $G-v = G[A]$. If $G$ and $H$ are graphs with disjoint vertex sets, then the \emph{disjoint union of $G$ and $H$}, denoted by $G\cup H$, is the graph on $V(G \cup H) = V(G) \cup V(H)$ with $E(G \cup H) = E(G) \cup E(H)$. For positive integers $s,t$, we use the usual notation $K_s$, $K_{s,t}$, and $C_s$ for a complete graph on $s$ vertices, a complete bipartite graph with parts of sizes $s$ and $t$, and a cycle graph on $k$ vertices. The graph $K_{1,t}$ is also called \emph{star}, and a maximum degree vertex of a star is called \emph{centre} of the star.

For a graph $G$ and disjoint subsets $D, S \subseteq V(G)$, we call $(G,D,S)$ a \emph{game state on $G$} and think of the vertices in $D$ and $S$ to be the vertices claimed by the Dominator and the Staller, respectively. In particular, it makes sense to talk about the MBD-game on a game state. We already saw an informal definition of the MBD-game in \Cref{sec:introduction}, but especially for the later \Cref{algorithm:outcome_game_state}, it is insightful to have a more formal recursive definition. Let us use $\mathcal D$ and $\mathcal S$ as synonyms for the players Dominator and Staller and define the order $\leq$ on $\{\mathcal D, \mathcal S\}$ by $\mathcal S \leq \mathcal D$.

\begin{definition}\label{definition:recursive_outcome}
    Let $(G,D,S)$ be a game state, and let $p \in \{\mathcal D, \mathcal S\}$. We define the \emph{outcome $o_p(G,D,S) \in \{\mathcal D, \mathcal S\}$ of the game state $(G,D,S)$ and the first player $p$} as follows. If $D \cup S = V(G)$, then $o_p(G,D,S) = \mathcal S$ if and only if there is a vertex $v \in V(G)$ with $N_{G}[v] \subseteq S$. On the other hand, if $D \cup S \neq V(G)$, then 
    \begin{align*}
        &o_{\mathcal D}(G,D,S) = \max\{o_{\mathcal S}(G,D\cup \{v\},S) : v \in V(G) \setminus (D \cup S)\} \text{ and}\\
        &o_{\mathcal S}(G,D,S) = \min\{o_{\mathcal D}(G,D,S\cup \{v\}) : v \in V(G) \setminus (D \cup S)\}.
    \end{align*}
    Here $\min$ and $\max$ are with respect to the order $\leq$ on $\{\mathcal D, \mathcal S\}$. 
    In case $D = S = \emptyset$, we drop them in the notation and write $o_p(G) = o_p(G,D,S)$.
\end{definition}

We shall also use the more common notion of outcome $o(G) \in \{\mathcal D, \mathcal N, \mathcal S\}$ defined for a graph $G$ as follows. 
\begin{align*}
    o(G) = \mathcal D \ &\Leftrightarrow \ o_{\mathcal S}(G) = \mathcal D\\
    o(G) = \mathcal N \ &\Leftrightarrow \ o_{\mathcal D}(G) = \mathcal D \text{ and } o_{\mathcal S}(G) = \mathcal S\\
    o(G) = \mathcal S \ &\Leftrightarrow \ o_{\mathcal D}(G) = \mathcal S
\end{align*}
If one of the players wins on a graph going second, they also win going first (cf. \cite[Proposition 2.1.6]{HKSS}). This means that this notion of outcome of a graph is well-defined. As before, it is convenient to introduce the linear ordering $\mathcal S \leq \mathcal N \leq \mathcal D$ on $\{\mathcal S,\mathcal N, \mathcal D\}$ so that we can, for example, write $o(G) \geq \mathcal N$ to mean that the Dominator has a winning strategy on $G$ as the first player.

\subsection*{Computational Methods}
Part of this work relies on computer-aided computations with graphs \cite{repo}. 
The graphs of interest are generated using \nauty\ \cite{McKayPiperno2014}, and the code used to process these graphs is contained mainly in the files \texttt{graph.cpp}, \texttt{game\_state.cpp}, and \texttt{pccgs.cpp} in \cite{repo}. All computations that are needed in this work are executed via the bash script \texttt{generate\_and\_check.sh}, which is referenced in the proof of any theorem or lemma established through brute-force computation. After running \texttt{generate\_and\_check.sh}, details on whether the computations finished successfully and on how long they took can be found in log files stored in the \texttt{logs} directory. Running the whole bash script takes roughly 147 CPU-days.
From now on, file names in teletype font are  referring to \cite{repo}. For detailed documentation and instructions on running the code, the reader is referred to \texttt{README.md} and the inline comments in the code. 

\section{An upper bound construction}\label{sec:g-construction}

In this section, we derive the upper bound for $\beta(d)$. This is given by a general graph construction introduced to us by Valentin Gledel.

\begin{theorem}\label{proposition:g-construction}
    For every $d \in \mathbb N$, there exists a graph $G$ with $2^{d+1}+2d$ vertices such that $o(G)= \mathcal S$.
\end{theorem}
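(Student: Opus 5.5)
We use a hypercube-type generalisation of $P_3\cup P_3$, which is exactly the case $d=1$ of the graph below. Let $G$ consist of $2d$ \emph{hub} vertices $u_1^0,u_1^1,\dots,u_d^0,u_d^1$ and $2^{d+1}$ \emph{leaf} vertices $\ell_{\sigma,b}$, one for each $\sigma\in\{0,1\}^d$ and $b\in\{0,1\}$. The leaf $\ell_{\sigma,b}$ is adjacent to exactly $u_1^{\sigma_1},\dots,u_d^{\sigma_d}$, and there are no other edges. Then $|V(G)|=2^{d+1}+2d$. Every leaf has degree $d$, and every hub has degree $2^d\ge d$, so $\delta(G)=d$.

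The key observation is that $N[\ell_{\sigma,b}]=\{\ell_{\sigma,b},u_1^{\sigma_1},\dots,u_d^{\sigma_d}\}$. So the Staller wins as soon as, for some $\sigma$, she owns one hub from each pair, namely $u_i^{\sigma_i}$ for every $i$, together with one of the two twin leaves $\ell_{\sigma,0},\ell_{\sigma,1}$. We give the Staller, who moves second, the following strategy.
\begin{itemize}
\item[(a)] If the Dominator claims a hub $u_i^{c}$, the Staller immediately claims $u_i^{1-c}$. In particular, she never lets the Dominator take both hubs of a pair.
\item[(b)] If the Dominator claims a leaf while some pair $i$ is still untouched, the Staller \emph{resolves} such a pair by claiming one of $u_i^0,u_i^1$.
\item[(c)] Once every pair is resolved, the hubs owned by the Staller determine a unique $\sigma$. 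She then claims whichever twin $\ell_{\sigma,b}$ is still free, and wins.
\end{itemize}
Rule (a) guarantees that each pair is eventually resolved with exactly one hub belonging to the Staller, so a unique $\sigma$ is determined at the end. The whole difficulty is therefore to show that the choices in (b) can be made so that, when the last pair is resolved, at least one of $\ell_{\sigma,0},\ell_{\sigma,1}$ is unclaimed, with the Staller to move or the Dominator unable to take both.

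We would handle this with an Erd\H{o}s--Selfridge-type weight argument, in the spirit of \Cref{cor:ES}. Call a leaf $\ell_{\sigma,b}$ \emph{alive} if it is unclaimed and $\sigma$ agrees with every hub the Staller has claimed. Give each alive leaf the weight $2^{-k}$, where $k$ is the number of unresolved pairs, and let $\Phi$ be the total weight, so that initially $\Phi=2^{d+1}/2^d=2$. When forced by (a), the Staller keeps the alive leaves on her side of pair $i$ and each of their weights doubles. In (b) she chooses the pair and the side maximising the resulting $\Phi$; by averaging over the two sides this does not decrease $\Phi$, because the alive leaves split into those agreeing with $u_i^0$ and those agreeing with $u_i^1$, and one side holds at least half of them.

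A Dominator leaf move costs at most $2^{-k}$, and it is immediately followed by a Staller resolution, which increases $k$'s contribution. We then need an invariant of the form $\Phi\ge 1+2^{-k}$ (or similar) while $k\ge1$. At $k=0$ this forces two alive twins, or one alive twin with the Staller to move, which is exactly what (c) needs. The main obstacle is verifying this invariant precisely, including the off-by-one issues of who moves when the last pair is resolved.

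If the naive weight argument is too lossy, we would refine (b). When the Dominator claims $\ell_{\sigma,b}$, the Staller resolves an unresolved pair $i$ \emph{against} $\sigma$, that is, she claims $u_i^{1-\sigma_i}$. This makes the Dominator's leaf dead at no cost to the Staller. It then remains to check, by induction on $d$, that the surviving subcube, which has the same structure with $d$ replaced by $d-1$, still has all its leaves unclaimed.
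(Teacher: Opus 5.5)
Your graph is exactly the paper's (Gledel's) construction: the hubs $u_i^0,u_i^1$ play the role of $a_i,b_i$, and the twin leaves $\ell_{\sigma,0},\ell_{\sigma,1}$ play the role of $A_s,B_s$. So the vertex count and the degree claims are fine. The gap is in the Staller's strategy. The step you yourself call ``the whole difficulty'' is never proved, namely that both twins of the final $\sigma$ are still free when the last pair is resolved. The weight argument does not close it as written. Your averaging bound covers only the free choices in (b). In (a) the Staller's side is forced, so nothing in your accounting prevents $\Phi$ from dropping when the alive leaves are unevenly split across pair $i$. Ruling this out needs exactly the balance property you have not established, and the invariant $\Phi\ge 1+2^{-k}$ remains a conjecture.

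The missing idea is to pair the twins as well. In the paper, $V(G)$ is partitioned into the pairs $\{u_i^0,u_i^1\}$ and $\{\ell_{\sigma,0},\ell_{\sigma,1}\}$, and the Staller answers every Dominator move by taking its partner. At the end of the game she owns exactly one vertex of each pair. Her hubs determine a unique $\sigma$, and she also owns one of $\ell_{\sigma,0},\ell_{\sigma,1}$, whose closed neighbourhood is then entirely hers. This needs no adaptivity, no potential, and no care about who moves after the last resolution.

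Your fallback in the last paragraph can also be completed, and no induction on $d$ is needed. Consider the invariant ``every leaf consistent with all of the Staller's hubs is unclaimed''. Every Staller move in (a) or (b) preserves it: claiming a hub only shrinks the consistent set, and in the refined (b), claiming $u_i^{1-\sigma_i}$ removes the just-claimed leaf $\ell_{\sigma,b}$ from that set. So after the last resolution both twins of $\sigma$ are free, and the Dominator can take only one. You would still have to specify the Staller's reply when the Dominator takes the free partner of a hub she claimed under (b), since rule (a) is undefined there; any move, for example resolving another untouched pair, works. This route is valid but strictly more work than the pairing strategy.
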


\begin{proof}
    We construct a bipartite graph $G=(X,Y,E)$ with
    \[
        |X|=2d, \,\ |Y|=2^{d+1}
    \]
    such that the Staller wins the game on $G$ going second. The elements of $X$ and $Y$ each come in pairs; let $a$, $b$, $A$ and $B$ be symbols for denoting vertices and let
    \[
        X=\{a_1,b_1,\dots , a_d,b_d \},
    \]
    and let
    \[
        Y=\bigcup_{s \in \{a,b\}^d} \{A_s,B_s \}.
    \]
    Vertices in $Y$ are indexed by binary sequence $s \in \{a,b\}^d$, so we can define the edge set $E$ as follows: for each sequence $s$, we draw an edge between the corresponding elements of $X$ and both vertices $A_s$ and $B_s$. For example, the sequence $s=(a,a,\dots,a)$ gives rise to a complete bipartite subgraph $K_{d,2}$ between the vertex sets $\{ a_1, \dots, a_d \}$ and $\{A_s,B_s\}$.

    A pairing strategy is used to show that the Staller wins. The Staller decomposes the vertex set into disjoint pairs 
    \[
        (a_1,b_1), \dots , (a_d,b_d), (A_{s_1}, B_{s_1}), \dots, (A_{s_{2^d}}, B_{s_{2^d}}).
    \]
    After each move of the Dominator, the Staller responds by choosing the other vertex in the pairing. At the end of the game, there is some sequence $s \in \{a,b\}^d$ such that the Staller has chosen all from $X$ corresponding to this sequence, and the Staller has also claimed a vertex $x \in \{A_s, B_s \}$. It follows that the Staller has claimed $x$ and all of its neighbours, and thus this pairing strategy guarantees that the Staller wins.
    See \Cref{fig:g-construction} for an example.
\end{proof}

\begin{figure}[ht]
    \centering
    \begin{tikzpicture}
        \foreach \s [count=\i] in {aa,ab,ba,bb}
        {
            \node[vertex,label={[labelsty]left:$A_{\s}$}] (A\s) at (-4,(\i*0.8-2) {};
            \node[vertex,label={[labelsty]right:$B_{\s}$}] (B\s) at (4,\i*0.8-2) {};
        }
        \node[vertex,label={[labelsty]180-40:$a_1$}] (a1) at (0,0.5) {};
        \node[vertex,label={[labelsty]180-20:$a_2$}] (a2) at (0,1.5) {};
        \node[vertex,label={[labelsty]180+40:$b_1$}] (b1) at (0,-0.5) {};
        \node[vertex,label={[labelsty]180+20:$b_2$}] (b2) at (0,-1.5) {};
        \foreach \s/\t/\col [count=\k] in {{a,a}/aa/colR,{a,b}/ab/colG,{b,a}/ba/colO,{b,b}/bb/colB}
        {
            \foreach \v [count=\i] in \s
            {
                \ifnum\k<2
                    \pgfmathparse{1}\let\op=\pgfmathresult
                \else
                    \pgfmathparse{0.25}\let\op=\pgfmathresult
                \fi
                \begin{scope}[opacity=\op]
                    \draw[edge,\col] (\v\i)--(A\t);
                    \draw[edge,\col] (\v\i)--(B\t);
                \end{scope}
            }
        }

        \foreach \x in {a1,a2,Baa,Bab,Aba,Bbb}
        {
            \node[crossvertex] at (\x) {};
        }
    \end{tikzpicture}

    \caption{A partial illustration of the construction for \Cref{proposition:g-construction} in the case when $d=2$. We draw the edges corresponding to two of the four binary sequences of length $2$ (since the picture becomes too crowded to be helpful when all of the edges are drawn). The crosses indicate vertices that the Staller chooses during the game. In this case, the vertex $B_{aa}$ is trapped by the Staller; the Staller chooses this vertex and all of its neighbours.}
    \label{fig:g-construction}
\end{figure}
This immediately implies the following upper bound on $\beta(d)$.
\begin{corollary}\label{cor:g-bound}
    $\beta(d) \leq 2^{d+1}+2d$.
\end{corollary}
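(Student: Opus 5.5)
The plan is to show that the graph $G=(X,Y,E)$ from the proof of \Cref{proposition:g-construction} is admissible in the minimum defining $\beta(d)$. We already know from that theorem that $|V(G)|=2^{d+1}+2d$ and $o(G)=\mathcal S$. So the only thing left to check is that $\delta(G)=d$ exactly, and not just at least $d$, since the definition of $\beta(d)$ requires equality.

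We compute the degrees directly from the construction.
\begin{itemize}
    \item Each vertex $A_s$ or $B_s$ in $Y$ is adjacent to exactly the $d$ vertices of $X$ selected by the sequence $s\in\{a,b\}^d$. One vertex is chosen from each pair $\{a_i,b_i\}$, so these are $d$ distinct vertices. Since $G$ is bipartite, there are no other edges, and every vertex of $Y$ has degree exactly $d$.
    \item A vertex $a_i$ (and symmetrically $b_i$) is adjacent to $A_s$ and $B_s$ for every $s$ with $s_i=a$. There are $2^{d-1}$ such sequences, so $\deg(a_i)=2\cdot 2^{d-1}=2^d$.
\end{itemize}
Since $2^d\geq d$ for all $d\geq 0$, the minimum degree is attained on $Y$, and $\delta(G)=d$.

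The degenerate case $d=0$ needs a quick separate check. There $X=\emptyset$ and $Y=\{A_{()},B_{()}\}$ consists of two isolated vertices, so again $\delta(G)=0$. The pairing argument in \Cref{proposition:g-construction} still applies, because the Staller simply claims the isolated vertex not taken by the Dominator.

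Having exhibited a graph with $2^{d+1}+2d$ vertices, minimum degree exactly $d$, and $o(G)=\mathcal S$, the definition of $\beta(d)$ as a minimum immediately gives $\beta(d)\leq 2^{d+1}+2d$. No step here is a genuine obstacle. The only point requiring care is confirming that the minimum degree equals $d$ rather than exceeding it, which the degree count on $Y$ settles.
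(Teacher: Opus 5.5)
Your proposal is correct and follows the paper's route: the paper reads the bound directly off Gledel's construction in \Cref{proposition:g-construction}. Your explicit degree count (degree $d$ on $Y$, degree $2^d$ on $X$) supplies the check that $\delta(G)=d$ exactly, which the paper leaves implicit, since the statement of \Cref{proposition:g-construction} does not mention the minimum degree.
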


\section{The Erd\H{o}s-Selfridge Theorem}\label{section:ES}

This paper makes heavy use of a theorem of Erd\H{o}s-Selfridge on winning strategies of Maker-Breaker games.

\begin{theorem}[{Erd\H{o}s-Selfridge Theorem \cite{ErdösSelfridge}}] \label{thm:ES}
    Consider a Maker-Breaker game with a finite base set $X$ such that $\mathcal F \subset \mathcal P(X)$ is the set of all winning sets for the Maker. If the Breaker goes first and 
    \[
        \sum_{F \in \mathcal F} 2^{-|F|} < 1,
    \]
    then the Breaker has a winning strategy. If the Breaker goes second and 
    \[
        \sum_{F \in \mathcal F} 2^{-|F|} < 1/2,
    \]
    then the Breaker has a winning strategy.
\end{theorem}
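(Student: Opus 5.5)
We prove the Erdős–Selfridge Theorem with the classical potential-function (weight) argument. The Breaker always moves so as to keep a weighted count of the still-alive winning sets small. Say a winning set $F$ is \emph{alive} if the Breaker has not yet claimed any element of $F$. For a position in which the Maker has claimed $M$ and the Breaker has claimed $B$, define the potential
\[
    \Phi = \sum_{F \in \mathcal F,\ F\cap B=\emptyset} 2^{-|F\setminus M|}.
\]
If at the end of the game the Maker owns some $F$, then that set contributes $2^0=1$, so $\Phi\ge 1$. It therefore suffices to give a Breaker strategy that keeps $\Phi<1$ at every moment when it is the Breaker's turn. It also has to stay below $1$ at the end, so the strategy must handle a final Maker move as well.

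The strategy is greedy. On each turn, the Breaker claims the unclaimed element $x$ maximizing $w(x)=\sum_{F \text{ alive},\, x\in F} 2^{-|F\setminus M|}$. We then analyse a full round consisting of a Breaker move $x$ followed by a Maker move $y$:
\begin{itemize}
    \item The Breaker's move kills every alive set containing $x$, so $\Phi$ decreases by exactly $w(x)$.
    \item The Maker's move doubles the term of each alive set containing $y$, so $\Phi$ increases by the current weight of $y$.
    \item That current weight is at most $w(y)$ computed before $x$ was taken, since taking $x$ only removes sets. By the greedy choice, $w(y)\le w(x)$.
\end{itemize}
So $\Phi$ is non-increasing over each round, and to be careful one checks this both when $x\neq y$ and in the overlap cases where a set contains both $x$ and $y$.

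\textbf{Breaker moves first.} The initial potential is $\Phi_0=\sum_F 2^{-|F|}<1$. Pairing each Breaker move with the Maker's reply, the round analysis shows that $\Phi$ never exceeds $\Phi_0<1$. If the game ends after a Breaker move, $\Phi$ has only decreased, so the final value is still below $1$. Hence the Maker never completes a winning set.

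\textbf{Breaker moves second.} The Maker first claims some element $y_0$. This at most doubles each term, so afterwards $\Phi\le 2\Phi_0<1$. From here on the Breaker is effectively the first player, and the previous case applies.

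The main obstacle is the bookkeeping in the single-round inequality. We must justify that the Maker's increase is bounded by $w(y)$ evaluated \emph{before} the Breaker's move, and then compare it with $w(x)$ via the greedy choice. This comparison is the step to write carefully. The rest is immediate.
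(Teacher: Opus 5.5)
Your proof is correct and follows the same argument the paper uses. The paper cites the general statement and proves its game-state version, \Cref{theorem:ES_for_gs}, this way: the Breaker (there, the Dominator) greedily claims an element of maximal weight, so each round cannot raise the potential because the Maker's gain is at most that element's weight before the Breaker's move, and when the Breaker moves second the Maker's opening move at most doubles a potential below $\tfrac12$.
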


The following corollary rephrases \Cref{thm:ES} in the context of the Maker-Breaker Domination game.

\begin{corollary} \label{cor:ES}
    Let $G=(V,E)$ be a graph such that
    \[
        \sum_{v \in V} 2^{-(\deg(v)+1)} < 1.
    \]
    Then $o(G) \geq \mathcal N$.
\end{corollary}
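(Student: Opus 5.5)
We prove \Cref{cor:ES} by recasting the domination game as a Maker-Breaker game in which the Staller plays the Maker and the Dominator plays the Breaker, and then applying the first part of \Cref{thm:ES}.

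The base set is $X=V$. The Staller wins exactly when, at the end of the game, some vertex $v$ has every vertex of $N[v]$ claimed by the Staller. If that happens, the Dominator's set misses $N[v]$ and so does not dominate $v$. Conversely, if the Dominator's final set $D$ is not dominating, some $v$ satisfies $N[v]\cap D=\emptyset$. Since every vertex is claimed by the end of the game, $N[v]$ then lies entirely in the Staller's set.

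So the Staller's winning sets are $\mathcal F=\{N[v]: v\in V\}$. The Staller wins the domination game precisely when she claims all elements of some member of $\mathcal F$.

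Some care is needed because distinct vertices may have equal closed neighbourhoods. The family $\mathcal F$ is then a set with fewer members than $V$, and
\[
\sum_{F\in\mathcal F}2^{-|F|}\le \sum_{v\in V}2^{-|N[v]|}=\sum_{v\in V}2^{-(\deg(v)+1)}<1.
\]
Removing duplicates only decreases the sum, so the hypothesis of \Cref{thm:ES} still holds.

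The rest is bookkeeping about who moves first. The statement $o(G)\geq\mathcal N$ means the Dominator wins going first. In this game the Dominator is the Breaker, so the Breaker moves first. The first part of \Cref{thm:ES} (Breaker first, sum $<1$) then gives the Breaker, that is the Dominator, a strategy preventing the Staller from claiming any $N[v]$.

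Under such a strategy the Dominator meets every closed neighbourhood, so his set is dominating and he wins. The game may also stop earlier, as soon as the Dominator holds a dominating set; this only helps him, since the Staller can no longer complete any $N[v]$. Hence $o(G)\geq\mathcal N$.

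There is no real obstacle here. The only points needing care are the direction of the Maker/Breaker role swap and the possible duplicates in $\mathcal F$, both handled above.
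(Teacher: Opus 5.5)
Your proof is correct and follows the paper's approach. The paper presents the corollary as a direct rephrasing of \Cref{thm:ES}, with the Staller as Maker, winning sets $N[v]$, and the Dominator as Breaker moving first; equivalently it is \Cref{theorem:ES_for_gs} with $D=S=\emptyset$, where $\pot(G,\emptyset,\emptyset)=\sum_{v\in V}2^{-(\deg(v)+1)}$. Your remarks on duplicate closed neighbourhoods and on the game possibly ending early are valid and cover points the paper leaves implicit.
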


Given a graph $G$, we define the \textit{potential of $G$} to be the quantity
\[
    pot(G):= \sum_{v \in V} 2^{-(\deg(v)+1)}.
\]
With this notation, \Cref{cor:ES} states that
\[
    pot(G) < 1 \implies o(G)\geq \mathcal N.
\]

One may think of the potential $pot(G)$ as “the initial potential for the Staller to win”. However, we can also regard the potential as a dynamic quantity that changes with each move. The family of winning sets is changing with each move. The idea here is that, after each move in a Maker Breaker game, we may reconsider the game as if it were to start from the current situation and then apply \Cref{thm:ES} if possible.

For a game state $(G,D,S)$ and a vertex $v \in V(G)$, we define the \emph{game state degree of $v$} to be
\begin{align*}
    \gsdeg_{(G,D,S)}(v) = \begin{cases}
        \infty, &\text{if } N[v] \cap D \neq \emptyset,\\
        \left|N[v] \setminus S\right| &\text{otherwise}.
    \end{cases}
\end{align*}
If the game state is clear from the context, we may drop the subscript. The \emph{Erd\H{o}s-Selfridge potential}, or just \emph{potential} of the game state $(G,D,S)$, is given by
\begin{align*}
    \pot(G,D,S) = \sum_{v \in V(G)} 2^{-\gsdeg(v)}.
\end{align*}
Here we use the convention that $2^{-\infty} = 0$. We often make use of the following version of the Erd\H{o}s-Selfridge Theorem, specified for game states.
\begin{theorem}[Erd\H{o}s-Selfridge Theorem for game states]\label{theorem:ES_for_gs}
    Let $(G,D,S)$ be a game state. If $\pot(G,D,S) < 1$, then $o_{\mathcal D}(G,D,S) = \mathcal D$. If $\pot(G,D,S) < \frac 12$, then $o_{\mathcal S}(G,D,S) = \mathcal D$. 
\end{theorem}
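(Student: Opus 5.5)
The plan is to reduce \Cref{theorem:ES_for_gs} to \Cref{thm:ES} by viewing the game from the current state as a fresh Maker-Breaker game, with the Staller as Maker and the Dominator as Breaker. Fix the game state $(G,D,S)$. The ground set is $X = V(G) \setminus (D \cup S)$, the set of unclaimed vertices. The Staller wins exactly if at the end some vertex $v$ has $N[v] \subseteq S$. So we take as winning sets for the Staller the family
\[
    \mathcal F = \{ N[v] \setminus S : v \in V(G),\ N[v] \cap D = \emptyset \}.
\]
A vertex $v$ with $N[v] \cap D \neq \emptyset$ is already dominated and can never be trapped, so it contributes no winning set; this matches the convention $\gsdeg(v) = \infty$ and $2^{-\infty}=0$. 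For every other $v$, the Staller traps $v$ precisely when she claims every vertex of $N[v] \setminus S$, which is a subset of $X$ of size $\gsdeg(v)$.

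The first step is to check that the recursive outcome from \Cref{definition:recursive_outcome} agrees with the outcome of this Maker-Breaker game, where the Dominator plays Breaker and the Staller plays Maker. This is a short induction on $|X|$. Two small points need care.

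\begin{itemize}
    \item If some $v$ has $\gsdeg(v)=0$, then $\emptyset \in \mathcal F$ and the Staller has effectively already won. This case is harmless: such a $v$ contributes $2^0 = 1$ to the potential, so $\pot \geq 1$ and the hypotheses of the theorem fail.
    \item \Cref{thm:ES} treats $\mathcal F$ as a set, while the potential sums over vertices. Two vertices could give the same set $N[v]\setminus S$. Then the sum over the set family is at most the sum over vertices, so the hypothesis on $\pot$ still implies the hypothesis of \Cref{thm:ES}.
\end{itemize}

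Once these identifications are made, $\sum_{F\in\mathcal F} 2^{-|F|} \le \pot(G,D,S)$. The two cases of the theorem then follow directly from \Cref{thm:ES}:

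\begin{itemize}
    \item If the Dominator (the Breaker) moves first and $\pot<1$, the Breaker wins, so $o_{\mathcal D}(G,D,S)=\mathcal D$.
    \item If the Dominator moves second and $\pot<\tfrac12$, the Breaker again wins, so $o_{\mathcal S}(G,D,S)=\mathcal D$.
\end{itemize}

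The only step with any real content is matching the end-of-game condition. The recursive definition declares a Staller win when all vertices are claimed and some $v$ has $N[v]\subseteq S$. The Maker-Breaker game declares a Maker win when the Maker owns some $F \in \mathcal F$. These agree because $N[v]\subseteq S_{\text{final}}$ holds exactly when $N[v]\cap D=\emptyset$ and $N[v]\setminus S \subseteq$ (the Staller's new claims). Everything else is bookkeeping, and I expect no real obstacle.
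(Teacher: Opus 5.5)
Your proof is correct, but it takes a different route from the paper. You use \Cref{thm:ES} as a black box and reduce to it. From the state $(G,D,S)$ you set up a fresh Maker--Breaker game on the unclaimed vertices, with the Staller as Maker and winning sets $N[v]\setminus S$ for each $v$ not yet dominated. The only work is then bookkeeping: the terminal conditions agree, $\emptyset\in\mathcal F$ forces $\pot\geq 1$, and collapsing duplicate sets only shrinks the sum.

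The paper instead reproves the Erd\H{o}s--Selfridge argument directly on game states. It defines the potential $\sum_{u\in N[v]}2^{-\gsdeg(u)}$ of an unclaimed vertex $v$. It observes that a Dominator (respectively Staller) move on $v$ decreases (respectively increases) $\pot$ by exactly this amount, and that this quantity can only shrink as $D$ grows. From this it concludes that a Dominator who always claims a vertex of maximum potential keeps $\pot<1$ throughout. At the end of the game $\pot$ must be a non-negative integer, hence $0$. For the second statement, it notes that the Staller's first move at most doubles $\pot$.

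Your route is shorter and cleanly separates the game-theoretic content from the translation. The paper's route is self-contained and works directly with the winning sets indexed by vertices, so no duplicate-collapsing remark is needed. It also exhibits the explicit greedy strategy and the per-vertex potential, which the paper reuses as the move-ordering heuristic in \Cref{algorithm:outcome_game_state}.
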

\begin{proof}
    For a game state $(G,D,S)$ and an unclaimed vertex $v \in V(G) \setminus (D \cup S)$, define the potential of $v$ to be 
    \[\pot_{(G,D,S)}(v) := \sum_{u\in N[v]} 2^{-\gsdeg(v)}.\]
    We prove that the Dominator can guarantee to win by always picking the vertex with the highest potential.
    
    Note that the Dominator (respectively Staller) picking the vertex $v$ in the game state $(G,D,S)$ decreases (respectively increases) the potential of the game state by exactly $\pot_{(G,D,S)}(v)$. In other words
    \begin{align}\label{align:pot_change}
        \pot_{(G,D,S)}(v) = \pot(G,D,S) - \pot(G,D \cup \{v\},S) = \pot(G,D,S \cup \{v\}) - \pot(G,D,S). 
    \end{align}
    Also note that if $(G,D_1,S)$ and $(G,D_2,S)$ are two game states with $D_1 \subseteq D_2$ and $v \in V(G) \setminus (D_2 \cup S)$, then 
    \begin{align}\label{align:pot_monotone}
        \pot_{(G,D_1,S)}(v) \geq \pot_{(G,D_2,S)}(v).
    \end{align}
    If $d \in V(G) \setminus (D \cup S)$ is a vertex with maximal potential in the game state $(G,D,S)$ and $s \in V(G) \setminus (D \cup \{d\}\cup S)$ is any vertex, then it follows from \Cref{align:pot_change} and \Cref{align:pot_monotone} that
    \begin{align*}
        \pot(G,D\cup \{d\}, S \cup \{s\}) &= \pot(G,D,S) - \pot_{(G,D,S)}(d) + \pot_{(G,D\cup \{d\},S)}(s)\\ 
        &\leq \pot(G,D,S).
    \end{align*}
    Putting things together, we see that if $\pot(G,D,S) < 1$ and the Dominator sticks to his strategy as first player, he can guarantee that the potential is always less than $1$. At the end of the game, all the vertices are claimed, and the potential has to be a non-negative integer. The potential at the end of the game is thus $0$, which means that the Dominator won. If $\pot(G,D,S) < \frac12$ and the Staller is the first player, she can at most double the potential in her first move. After that we are again in the case of $\pot(G,D,S) < 1$ and the Dominator being the first player. 
\end{proof}
A more detailed proof of a version of the result above for general Maker-Breaker games on hypergraphs can be found in \cite[Theorem 2.3.3]{HKSS}. For us, it is important that the Dominator usually wants to decrease the potential, while the Staller usually wants to increase it.

As a first application of \Cref{theorem:ES_for_gs}, we show the following.

\begin{proposition}\label{proposition:beta'(d+1)=beta(d)+1_large_d}
    For all integers $d\geq 3$, we have $\beta'(d+1)=\beta(d)+1$.
\end{proposition}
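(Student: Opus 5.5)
The plan is to prove the two inequalities separately. The upper bound $\beta'(d+1) \leq \beta(d)+1$ comes from adding a universal vertex. The lower bound $\beta'(d+1) \geq \beta(d)+1$ comes from deleting the Staller's first vertex, together with an Erd\H{o}s--Selfridge estimate (\Cref{theorem:ES_for_gs}) for the one case where deletion does not directly give the bound. The hypothesis $d \geq 3$ should enter only through that estimate.

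\emph{Upper bound.} Let $G$ have $\beta(d)$ vertices, $\delta(G)=d$ and $o(G)=\mathcal S$. Form $G'$ by adding a new vertex $w$ adjacent to every vertex of $G$.
\begin{itemize}
\item Every old vertex gains one degree, and $\deg(w)=\beta(d) \geq 2^{d+1} > d+1$ by \eqref{betarange}. Hence $\delta(G')=d+1$.
\item The Staller, moving first, claims $w$. For $v \in V(G)$ we have $N_{G'}[v]=N_G[v]\cup\{w\}$ with $w$ already hers, so $v$ is isolated by the Staller in $G'$ exactly when it is isolated by the Staller in $G$.
\item The remaining game is therefore the game on $G$ with the Dominator to move. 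Since $o(G)=\mathcal S$, the Staller wins it.
\end{itemize}
So $o(G') \leq \mathcal N$ with $|V(G')|=\beta(d)+1$.

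\emph{Lower bound: reduction.} Let $H$ have $n$ vertices, $\delta(H)=d+1$ and $o(H)\leq\mathcal N$. Suppose for contradiction that $n \leq \beta(d) \leq 2^{d+1}+2d$ (\Cref{cor:g-bound}). Let $s$ be a winning first move of the Staller, so $o_{\mathcal D}(H,\emptyset,\{s\})=\mathcal S$.
\begin{itemize}
\item For $v \neq s$, the vertex $v$ is isolated by the Staller in $H$ iff $N_{H-s}[v]\subseteq S$. So the game on $(H,\emptyset,\{s\})$ is the game on $H-s$ with one extra Staller winning set, $N_H(s)$.
\item $\delta(H-s)\geq d$. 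If the extra set were never needed, the Staller would win on $H-s$ going second, i.e.\ $o(H-s)=\mathcal S$.
\item If $\delta(H-s)=d$, this gives $n-1\geq\beta(d)$, a contradiction.
\item If $\delta(H-s)\geq d+1$, \Cref{cor:ES} gives $n-1\geq 2^{d+2}>2^{d+1}+2d$, again a contradiction.
\end{itemize}
So the real work is to show that the Dominator, moving first, can also defend the extra set $N_H(s)$.

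\emph{Lower bound: potential estimate.} I would bound the potential of $(H,\emptyset,\{s\})$ directly. Write $k=\deg(s)\geq d+1$.
\begin{itemize}
\item A non-neighbour $v\neq s$ of $s$ has $\gsdeg(v)\geq d+2$.
\item A neighbour of $s$ has $\gsdeg\geq d+1$.
\item The vertex $s$ itself has $\gsdeg = k$.
\end{itemize}
Hence
\[
\pot(H,\emptyset,\{s\}) \leq \frac{n-1-k}{2^{d+2}}+\frac{k}{2^{d+1}}+2^{-k} = \frac{n-1+k}{2^{d+2}}+2^{-k}.
\]
With $n\leq 2^{d+1}+2d$, this is $<1$ as long as $k$ is not much larger than $2^{d+1}-2d$, which needs $d\geq 3$ to have room. \Cref{theorem:ES_for_gs} then says the Dominator wins moving first, the desired contradiction.

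\emph{Main obstacle: large $k$.} When $\deg(s)$ is large, the bound above can exceed $1$. My first attempt is to let the Dominator open by claiming a neighbour $u$ of $s$ of largest potential. This kills the $2^{-k}$ term and all terms from $N[u]$, removing at least $(d+2)2^{-(d+2)}$ from the sum. One then has to check that the resulting potential is below $\tfrac12$, so that \Cref{theorem:ES_for_gs} applies with the Staller to move.
\begin{itemize}
\item If this is not enough, I would refine the estimate. Many vertices adjacent to $s$ force many edges, hence larger degrees elsewhere. Concretely, bound $\sum_v 2^{-\deg(v)}$ given the degree sum, and choose $u$ by averaging over $N(s)$.
\item The numerical verification that everything is $<\tfrac12$ for all $d\geq3$ and all $n\leq 2^{d+1}+2d$ is where I expect the difficulty. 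It is also why small $d$ is excluded here and has to be handled separately.
\end{itemize}
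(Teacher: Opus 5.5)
Your upper bound, your reduction to $H-s$ with one extra Staller winning set $N_H(s)$, and your estimate $\pot(H,\emptyset,\{s\}) \leq \frac{n-1+k}{2^{d+2}}+2^{-k}$ are all correct. The estimate is exactly the paper's. The gap is the large-$k$ case, and the route you propose there cannot work. If $s$ is adjacent to every other vertex, then $(H,\emptyset,\{s\})$ is essentially the game on $H-s$. That can be an arbitrary graph of minimum degree $d$ on up to $\beta(d)-1$ vertices, and no Erd\H{o}s--Selfridge-type estimate can certify a Dominator win on such graphs in general. Concretely, take $d=3$, let $H-s$ be a $3$-regular graph on $20$ vertices, and let $s$ be universal. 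After the Dominator claims any $u\in N(s)$, each of the $16$ vertices outside $N[u]\cup\{s\}$ still has game state degree $4$, so the potential is $16\cdot 2^{-4}=1\geq\frac12$. Moreover every vertex other than $s$ has degree exactly $d+1$, so your ``larger degrees elsewhere'' refinement has nothing to exploit. For such $H$ the only usable information is the minimality of $\beta(d)$. So the large-$k$ case has to go through your reduction, and what is missing is a proof that the extra winning set $N_H(s)$ never matters.

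That proof is a simple counting argument, which is the paper's \Cref{lemma:delete_huge_deg_vertex_if_Staller_goes_first}. After claiming $s$, the Staller gets only $\lfloor\frac{n-1}{2}\rfloor$ further vertices. So if $k\geq\frac n2$ she can never claim all of $N(s)$. The Staller's winning sets on $(H,\emptyset,\{s\})$ are then exactly those of $H-s$, and $o_{\mathcal D}(H,\emptyset,\{s\})=o_{\mathcal D}(H-s)=\mathcal D$ follows from your bullets, using $\delta(H-s)\geq d$ and $n-1<\beta(d)$.

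In the complementary case $d+1\leq k\leq\lceil\frac n2\rceil-1\leq 2^d+d-1$, your estimate is below $1$ for all $d\geq 3$. For $d=3$, however, you must not combine the worst case $k=d+1$ for the term $2^{-k}$ with the worst case $k=2^d+d-1$ for the linear term, since that gives $\frac34+\frac{3d}{2^{d+2}}>1$. Instead split into $k\in\{d+1,d+2\}$ and $k\geq d+3$, as the paper does. This is precisely where $d\geq 3$ enters: the threshold $\frac n2\leq 2^d+d$ lies inside the range where the potential bound works.
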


It is convenient to split off two lemmas. 
\begin{lemma}\label{lemma:delete_huge_deg_vertex_if_Staller_goes_first}
Let $G$ be a graph on $n$ vertices, and let $v \in V(G)$ with $\deg(v) \geq \frac n2$. Then 
    \[o_{\mathcal D}(G - v) = o_{\mathcal D} (G, \emptyset, \{v\}).\]
\end{lemma}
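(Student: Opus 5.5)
The plan is to compare the two games directly through their families of winning sets for the Dominator. In both games the set of unclaimed vertices at the start is $V(G)\setminus\{v\} = V(G-v)$, and the Dominator moves first. So the games have the same sequence of moves and the same final partitions of $V(G-v)$ into a Dominator set $D$ and a Staller set $S$. Only the winning condition for the Dominator differs. By \Cref{definition:recursive_outcome}, the outcome is decided once all vertices are claimed. It therefore suffices to compare which final partitions are Dominator wins.

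First I will characterise the two winning conditions.
\begin{itemize}
\item In $(G,\emptyset,\{v\})$ the final Staller set is $S\cup\{v\}$. The Dominator wins iff no vertex $u$ has $N_G[u]\subseteq S\cup\{v\}$. Since $v$ is never a Dominator vertex, this says two things: every $u\neq v$ has a Dominator vertex in $N_{G-v}[u]$, and $D\cap N_G(v)\neq\emptyset$.
\item In $G-v$ the Dominator wins iff only the first of these conditions holds.
\end{itemize}
Hence every final position won by the Dominator in $(G,\emptyset,\{v\})$ is also won by him in $G-v$. Consequently $o_{\mathcal D}(G,\emptyset,\{v\})=\mathcal D$ implies $o_{\mathcal D}(G-v)=\mathcal D$: the Dominator simply uses the same strategy.

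For the converse, suppose the Dominator has a winning strategy on $G-v$ going first, and he plays it in $(G,\emptyset,\{v\})$. It only remains to check that at the end he owns a neighbour of $v$. The game on $G-v$ has $n-1$ vertices and the Dominator moves first. So at the end the Staller has claimed exactly $\lfloor (n-1)/2\rfloor < n/2 \le \deg(v) = |N_G(v)|$ vertices. Since all neighbours of $v$ lie in $V(G-v)$, the Staller cannot have claimed all of $N_G(v)$. Every vertex is claimed at the end, so some neighbour of $v$ belongs to $D$, and the Dominator wins $(G,\emptyset,\{v\})$.

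The two outcomes therefore coincide. The only point needing care is the counting step together with the fact that the game is played to completion, so that unclaimed neighbours of $v$ end up with the Dominator. Nothing else is an obstacle. The degenerate case $n=1$ cannot occur, since it would force $0=\deg(v)\ge 1/2$.
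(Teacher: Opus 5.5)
Your proposal is correct and follows essentially the same argument as the paper. Both note that the two games are played on the same unclaimed set $V(G)\setminus\{v\}$ with the Dominator moving first, and that the Staller ends with only $\lfloor (n-1)/2\rfloor < n/2 \le \deg(v)$ vertices, so every final Dominator set automatically meets $N_G(v)$ and the winning conditions coincide. The paper states this as ``a set $D$ of size $\lfloor n/2\rfloor$ dominates $G$ iff it dominates $G-v$'', while you split it into the trivial inclusion plus the counting step; this is only a cosmetic difference.
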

\begin{proof}
    As the first player, the Dominator occupies exactly $\lfloor \frac n2 \rfloor$ vertices at the end of the game, both on $G-v$ and on $(G, \emptyset,\{v\})$. But a set $D \subseteq V(G) \setminus \{v\}$ of size $|D| = \lfloor \frac n2 \rfloor$ is a dominating set of $G$ if and only if it is a dominating set of $G-v$, since $\deg(v) \geq \frac n2$. This shows that a strategy for the Dominator as the first player on $G-v$ is a winning strategy if and only if it is a winning strategy on $(G,\emptyset, \{v\})$. 
\end{proof}

\begin{lemma}\label{lemma:beta'(d+1)_leq_beta(d)+1}
    Let $d$ be a non-negative integer. Then $\beta'(d+1)\leq \beta(d)+1$.
\end{lemma}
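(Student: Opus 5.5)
Take a graph $G$ on $n=\beta(d)$ vertices with $\delta(G)=d$ and $o(G)=\mathcal S$, i.e.\ $o_{\mathcal D}(G)=\mathcal S$. Form $G'$ from $G$ by adding a new vertex $v$ joined to every vertex of $G$. Then $G'$ has $n+1=\beta(d)+1$ vertices. Every old vertex gains exactly one neighbour, and $\deg_{G'}(v)=n\ge d+1$; the last inequality needs $n\geq d+2$, which holds since $n\ge 2^{d+1}$ by \eqref{betarange}. Hence $\delta(G')=d+1$. It therefore suffices to show $o_{\mathcal S}(G')=\mathcal S$, i.e.\ $o(G')\leq\mathcal N$, which gives $\beta'(d+1)\le \beta(d)+1$.

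The Staller, moving first, claims $v$. The game continues from the state $(G',\emptyset,\{v\})$ with the Dominator to move, so we need $o_{\mathcal D}(G',\emptyset,\{v\})=\mathcal S$. Since $\deg_{G'}(v)=n\ge \frac{n+1}{2}=\frac{|V(G')|}{2}$, \Cref{lemma:delete_huge_deg_vertex_if_Staller_goes_first} applies to $G'$ and $v$. It gives $o_{\mathcal D}(G',\emptyset,\{v\})=o_{\mathcal D}(G'-v)=o_{\mathcal D}(G)=\mathcal S$. This is exactly what is needed.

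The argument is short. The only points to check are the hypothesis $\deg(v)\ge |V(G')|/2$ of the lemma, which holds because $v$ is universal, and the degenerate case $d=0$, where $\beta(0)=2$ still gives $n\ge 2=d+2$. For a self-contained check, one could also argue directly, without the lemma. Since $v$ is claimed by the Staller and is adjacent to everything, it dominates nothing for the Dominator. A vertex $u\neq v$ is dominated by a Dominator set $D\subseteq V(G)$ in $G'$ iff it is dominated in $G$. The vertex $v$ itself is dominated as soon as $D\neq\emptyset$. So the Staller simply copies her winning strategy on $G$.
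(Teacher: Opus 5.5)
Your proof is correct and follows the paper's own argument: add a universal vertex $v$, let the Staller claim it first, and use \Cref{lemma:delete_huge_deg_vertex_if_Staller_goes_first} to reduce to $o_{\mathcal D}(G)=\mathcal S$. One small point: the bound $\deg_{G'}(v)=n\ge d+1$ only requires $n\ge d+1$, which holds automatically for any graph of minimum degree $d$, so you do not need $n\ge d+2$.
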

\begin{proof}
    Given a graph $G$ on $\beta(d)$ vertices with $\mindeg(G) = d$ and $o(G) = \mathcal S$, we construct a graph $G'$ on $\beta(d)+1$ vertices with $\mindeg(G') = d+1$ and $o(G') \leq \mathcal N$ as follows.
    Let $G'$ have vertex set $V(G') = V(G) \cup \{v\}$ for a new vertex $v \notin V(G)$ and edge set $E(G') = E(G) \cup \{vu : u \in V(G) \}$.
    Certainly, $|V(G')|=\beta(d)+1$ and $\mindeg(G')=d+1$ as $\deg_{G'}(v) = \beta(d)\geq 2^{d+1} \geq d+1$, by \Cref{betarange}. Furthermore, by \Cref{lemma:delete_huge_deg_vertex_if_Staller_goes_first} and because of the assumption $o(G) = \mathcal S$, we get
    \[
        o_{\mathcal D}(G', \emptyset, \{v\}) = o_{\mathcal D}(G) = \mathcal S.
    \]
    This shows $o(G') \leq \mathcal N$.
\end{proof}

\begin{proof}[Proof of \Cref{proposition:beta'(d+1)=beta(d)+1_large_d}]
    By \Cref{lemma:beta'(d+1)_leq_beta(d)+1}, it remains to show that $\beta'(d+1) > \beta(d)$. Let $G'$ be a graph on $n\leq \beta(d)$ vertices with $\mindeg(G')=d+1$.
    We have to prove that $o(G') = \mathcal D$. 
    Let $v \in V(G')$ be the vertex which the Staller chooses first, and let $d_1=\deg(v)$.
    We distinguish two cases depending on whether $d_1 \geq \frac n2$ or $d_1 < \frac n2$.
    
    First, if $d_1 \geq \frac n2$, note that $G = G'-v$ satisfies $\mindeg(G)\geq d$ and $|V(G)| = n-1 < \beta(d)$, so that $o_{\mathcal D}(G) = \mathcal D$.
    By \Cref{lemma:delete_huge_deg_vertex_if_Staller_goes_first}, we also get $o_{\mathcal D}(G', \emptyset, \{v\}) = \mathcal D$.
    
    Second, if $d_1 \leq \lceil \frac n2 \rceil -1$, we simply use the Erd\H{o}s-Selfridge Theorem (\Cref{theorem:ES_for_gs}) for the game state $(G', \emptyset, \{v\})$. By \Cref{cor:g-bound}, we have $n\leq \beta(d) \leq 2^{d+1}+2d$ so that
    \begin{align*}
        \pot(G',\emptyset,\{v\})&=\sum_{u\in V(G')\setminus N_{G'}[v]} 2^{-\deg_{G'}(u)-1}+\sum_{u\in N_{G'}[v]} 2^{-\deg_{G'}(u)}\\
        &\leq \frac{n-d_1-1}{2^{d+2}}+ \frac{d_1}{2^{d+1}} + \frac1{2^{d_1}} \leq \frac12 + \frac{d_1+2d-1}{2^{d+2}} + \frac1{2^{d_1}}.
    \end{align*}
    But for $d \geq 3$ and $d+1 \leq d_1 \leq \lceil \frac n2 \rceil -1 \leq 2^d+d-1$, this last expression is less than $1$. For $d+3 \leq d_1 \leq 2^d+d-1$, this follows from 
    \[
        \frac12 + \frac{d_1+2d-1}{2^{d+2}} + \frac1{2^{d_1}} \leq \frac12 + \frac{2^d+3d-2}{2^{d+2}} + \frac1{2^{d+3}}\leq  \frac34 + \frac{6d-3}{2^{d+3}} <1
    \]
    and for $d+ 1 \leq d_1 \leq d+2$ from 
    \[
        \frac12 + \frac{d_1+2d-1}{2^{d+2}} + \frac1{2^{d_1}} \leq \frac12 + \frac{3d+1}{2^{d+2}} + \frac1{2^{d+1}}<1.
    \]
    The Dominator now wins on $(G,\emptyset, \{v\})$ going first because of \Cref{theorem:ES_for_gs}. Since $v \in V(G')$ was arbitrary, we get $o(G') = \mathcal D$.
\end{proof}

In \Cref{section:low_surplus} and for computing $\beta(d)$ and $\beta'(d+1)$ for $d \in \{1,2\}$ (see the forthcoming \Cref{proposition:tiny_betas}), we use computers to check all relevant graphs using the following \Cref{algorithm:outcome_game_state}. At each game state, it first checks two winning conditions: It checks whether the Staller has already claimed a winning set, and it checks whether the potential is small enough, such that it is guaranteed the Dominator has a winning strategy. In all other cases, it recursively goes over all continuations of the game and decides if there is a winning move for the player who moves next.

Generally, deciding the outcome of a graph is PSPACE-complete (cf. \cite{DGPR}), and in fact, for our purposes, this approach only works for very small graphs. The implementation of \Cref{algorithm:outcome_game_state} we use can be found in \texttt{game\_state.cpp}. There we additionally use the heuristic that claiming vertices $v$ with high potential $\pot_{(G,D,S)}(v)$, as defined in the proof of \Cref{theorem:ES_for_gs}, is typically better than claiming vertices with low potential. Accordingly, the loops in \lineref{algorithm_line:gs_outcome_Dominator_forloop} and \lineref{algorithm_line:gs_outcome_Staller_forloop} of \Cref{algorithm:outcome_game_state} iterate over the vertices in decreasing order of potential. Note that \Cref{algorithm:outcome_game_state} closely resembles the recursive \Cref{definition:recursive_outcome}.

\begin{algorithm}[ht]
    \caption{\textsc{Outcome}$((G,D,S),p)$}
    \label{algorithm:outcome_game_state}
    \begin{algorithmic}[1]
        \REQUIRE A game state $(G,D,S)$ and a first player $p \in \{\mathcal D, \mathcal S\}$
        \ENSURE Returns $o_p(G,D,S)$
    
        \FORALL{$v \in V(G)$}
            \STATE \textbf{if} $\gsdeg_{(G,D,S)}(v) = 0$ \textbf{then} \textbf{return} $\mathcal S$\label{algorithm_line:gs_outcome_Staller_already_won}
        \ENDFOR
        \IF{$p = \mathcal D$}
            \STATE \textbf{if} $\pot(G,D,S) < 1$ \textbf{then} \textbf{return} $\mathcal D$\label{algorithm_line:gs_outcome_Dominator_potcheck}
            \FORALL{$v \in V(G) \setminus (D \cup S)$} \label{algorithm_line:gs_outcome_Dominator_forloop}
                \STATE \textbf{if} $\textsc{Outcome}((G,D \cup \{v\}, S), \mathcal S) = \mathcal D$ \textbf{then} \textbf{return} $\mathcal D$\label{algorithm_line:gs_outcome_Dominator_inside_forloop}
            \ENDFOR
            \STATE \textbf{return} $\mathcal S$\label{algorithm_line:gs_outcome_Dominator_no_winning_vertex}
        \ELSIF{$p = \mathcal S$}
            \STATE \textbf{if} $\pot(G,D,S) < \tfrac{1}{2}$ \textbf{then} \textbf{return} $\mathcal D$\label{algorithm_line:gs_outcome_Staller_potcheck}
            \FORALL{$v \in V(G) \setminus (D \cup S)$} \label{algorithm_line:gs_outcome_Staller_forloop}
                \STATE \textbf{if} $\textsc{Outcome}((G,D,S \cup \{v\}), \mathcal D) = \mathcal S$ \textbf{then} \textbf{return} $\mathcal S$\label{algorithm_line:gs_outcome_Staller_inside_forloop}
            \ENDFOR
            \STATE \textbf{return} $\mathcal D$
        \ENDIF
    \end{algorithmic}
\end{algorithm}

\begin{proof}[Proof of correctness of \Cref{algorithm:outcome_game_state}]
    Let $(G,D,S)$ be a game state, and let $p\in \{\mathcal D,\mathcal S\}$ be a first player. If there is $v \in V(G)$ with $\gsdeg(v) = 0$, then all the vertices in $N_{G}[v]$ are claimed by the Staller, and the Staller wins on $(G,D,S)$ regardless of the first player. In this case, we indeed return $\mathcal S$ in \lineref{algorithm_line:gs_outcome_Staller_already_won}. From now on, we may assume that $\gsdeg(v) > 0$ for all $v \in V(G)$.
    
    We proceed by induction on $n := |V(G) \setminus (D \cup S)|$. For the induction base $n = 0$, note that $\pot(G,D,S) \in \Z_{\geq 0}$ and thus $\pot(G,D,S) = 0$ since $\gsdeg(v) > 0$ for all $v \in V(G)$. This means that the Dominator wins on $(G,D,S)$ regardless of the first player, and indeed the algorithm returns $\mathcal D$ in \lineref{algorithm_line:gs_outcome_Dominator_potcheck} or \lineref{algorithm_line:gs_outcome_Staller_potcheck}. Now assume that $n > 0$ and that \Cref{algorithm:outcome_game_state} matches its specification for game states $(G',D',S')$ with $|V(G') \setminus (D' \cup S')| < n$.
        
    Assume that $p = \mathcal D$. If $\pot(G,D,S) < 1$, then $o_{\mathcal D}(G,D,S) = \mathcal D$ by the Erd\H os-Selfridge Theorem, and indeed we return $\mathcal D$ in \lineref{algorithm_line:gs_outcome_Dominator_potcheck}. If there is a vertex $v \in V(G) \setminus (D \cup S)$ with $o_{\mathcal S}(G, D \cup \{v\}, S) = \mathcal D$, then we should return $\mathcal D$, and indeed, since $|V(G) \setminus (D \cup \{v\} \cup S)| = n-1$, the algorithm in fact returns $\mathcal D$ in \lineref{algorithm_line:gs_outcome_Dominator_inside_forloop} by the induction hypothesis. If there is no such $v$, we return $\mathcal S$ in \lineref{algorithm_line:gs_outcome_Dominator_no_winning_vertex}, and indeed $o_{\mathcal D}(G, D, S) = \mathcal S$ in that case. The case $p = \mathcal S$ is analogous to the case $p = \mathcal D$. 
\end{proof}

For $d \leq 2$, computing $\beta(d)$ and $\beta'(d+1)$ using \nauty\ and our implementation of \Cref{algorithm:outcome_game_state} brute-force is enough.

\begin{proposition}\label{proposition:tiny_betas}
    For $d \in \{0, 1, 2\}$ we have 
    \begin{align*}
        \beta(d) = 2^{d+1} + 2d \text{ and } \beta'(d+1) = 2^{d+1} + 2d+1.
    \end{align*}
\end{proposition}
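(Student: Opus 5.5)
We need $\beta(0)=2$, $\beta(1)=6$, $\beta(2)=12$ and $\beta'(1)=3$, $\beta'(2)=7$, $\beta'(3)=13$. By \Cref{cor:g-bound} the upper bounds $\beta(d)\le 2^{d+1}+2d$ hold, and \Cref{lemma:beta'(d+1)_leq_beta(d)+1} then gives $\beta'(d+1)\le 2^{d+1}+2d+1$. So it remains to prove the matching lower bounds. We do this by exhaustive computation, using \nauty\ (\geng) and our implementation of \Cref{algorithm:outcome_game_state}.

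For $\beta(d)$ we need that every graph $G$ with $\mindeg(G)=d$ and $n<2^{d+1}+2d$ vertices has $o_{\mathcal D}(G)=\mathcal D$. By \Cref{cor:ES} we only need to consider $n\ge 2^{d+1}$, so $n\in\{2,3\}$, $n\in\{4,5\}$ and $n\in\{8,\dots,11\}$ for $d=0,1,2$ respectively.

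\begin{itemize}
\item For $d=0$ the case is immediate by hand: with $n=1$ the Dominator claims the only vertex.
\item For $d\ge 1$ we generate with \geng\ all graphs (including disconnected ones) on $n$ vertices with minimum degree exactly $d$.
\item We discard those with $\pot(G)<1$ cheaply.
\item On the remaining graphs we run \textsc{Outcome}$((G,\emptyset,\emptyset),\mathcal D)$ and verify that it returns $\mathcal D$.
\end{itemize}

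Similarly, for $\beta'(d+1)$ we take every graph with $\mindeg=d+1$ and $n\le 2^{d+1}+2d$ vertices. We verify that \textsc{Outcome}$((G,\emptyset,\emptyset),\mathcal S)=\mathcal D$ whenever $\pot(G)\ge \tfrac12$; graphs with $\pot(G)<\tfrac12$ are won by the Dominator by \Cref{theorem:ES_for_gs}. Since $\pot<\tfrac12$ already excludes $n<2^{d+1}$ for $\mindeg=d+1$, this again leaves only a handful of vertex counts.

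The only real obstacle is computational: the graphs on 11 vertices with minimum degree 2, and on up to 12 vertices with minimum degree 3. The number of such graphs is in the millions, and the game tree per graph could be large.

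\begin{itemize}
\item To control the number of graphs, we restrict \geng\ to exact minimum degree and prune immediately by the potential.
\item To control the game trees, we rely on the built-in Erd\H{o}s--Selfridge cutoffs at every node of \Cref{algorithm:outcome_game_state}, together with the move-ordering heuristic by vertex potential.
\end{itemize}

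With these measures, each surviving graph should resolve quickly. The computation is part of \texttt{generate\_and\_check.sh}, and the proposition follows once its logs report success.
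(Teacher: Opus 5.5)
Your proposal is correct and follows the paper's proof: upper bounds from \Cref{cor:g-bound} and \Cref{lemma:beta'(d+1)_leq_beta(d)+1}, and lower bounds by exhaustive \nauty\ enumeration checked with \Cref{algorithm:outcome_game_state} over the same vertex ranges, with Erd\H{o}s--Selfridge pruning. The paper adds one ingredient you lack: for $\delta=3$, $n=12$ it enumerates only graphs of maximum degree at most $7$. A vertex $v$ of degree at least $8$ already gives the Dominator a win: if the Staller takes $v$, apply \Cref{lemma:delete_huge_deg_vertex_if_Staller_goes_first} together with $\beta(2)=12$; otherwise the Dominator answers with $v$ and the potential drops below $\tfrac12$. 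This restriction is not logically needed, but it substantially shrinks the search, which matters because the true counts are about $3.3\cdot 10^{8}$ graphs for $\beta(2)$ and about $1.9\cdot 10^{10}$ for $\beta'(3)$ even after the cut, not millions.
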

\begin{proof}
    For $d = 0$, this is immediate, so let us assume $d \in \{1,2\}$. 
    The upper bounds come from \Cref{cor:g-bound} and \Cref{lemma:beta'(d+1)_leq_beta(d)+1}. For the lower bounds for $\beta(d)$ with $d \in \{1,2\}$, we just check whether the Dominator wins on all graphs of minimum degree $d$ and $2^{d+1} \leq n \leq 2^{d+1}+2d-1$ vertices going second using \Cref{algorithm:outcome_game_state}. We do not have to check graphs with fewer vertices because of \Cref{theorem:ES_for_gs}. 
    For $\beta(1)$ we need to check 16 graphs and for $\beta(2)$ there are \pgfmathprintnumber{330541583} graphs to be checked, which takes around three hours.
    
    For bounding $\beta'(2)$ and $\beta'(3)$ from below, we proceed analogously, but for the case of minimum degree $3$ and $12$ vertices, we additionally assume the maximum degree to be at most $7$. This assumption is possible because of the following. Assume $G$ is a graph on $12$ vertices of minimum degree at least $3$ and with a vertex $v$ of degree at least $8$. If the Staller chooses $v$ in her first move, the Dominator wins because of \Cref{lemma:delete_huge_deg_vertex_if_Staller_goes_first} and since $\beta(2) = 12$. If the Staller chooses $w \neq v$ in her first move, the Dominator can claim $v$, reducing the potential to at most
    \[
        \pot(G,\{v\},\{w\}) \leq |N[w] \setminus N[v]| \cdot 2^{-3} < \frac 12,
    \]
    so that indeed $o(G) = \mathcal D$.
    
    For details on the computations carried out, we refer the reader to the small graphs section in \texttt{generate\_and\_check.sh}. 
    For $\beta'(2)$ we need to check 53 graphs and for $\beta(2)$ there are \pgfmathprintnumber{19407969744} graphs to be checked which takes around 205 hours of computation time.
\end{proof}

\section{Partition Strategies and RSI-decompositions}\label{section:RSI_theory}

In \Cref{section:low_surplus} and for \Cref{thm:plusone}, we need a slight modification of a partition strategy from \cite{BDGLP}. Let us first briefly summarize the relevant results from \cite{BDGLP}.

Let $G$ be a graph, and let $F$ be a spanning subgraph of $G$. Then $F$ is called \emph{perfect $[1,2]$-factor of $G$}, if every connected component of $F$ is either isomorphic to a single edge $K_2$ or a cycle graph $C_n$ with $n \geq 3$. The Dominator wins on $K_2$ and on cycle graphs $C_n$ going second. This means the Dominator wins on the perfect $[1,2]$-factor $F$ of $G$ going second, since the outcome of a graph can be computed from the outcome of its components, as shown in \Cref{table:outcome_of_disjoint_union}. 
\begin{table}[!ht]
    \centering
    \begin{tabular}{c  c  c  c }
        \toprule
        $o(G \cup H)$ & $o(G) = \mathcal D$ & $o(G) = \mathcal N$ & $o(G) = \mathcal S$ \\
        \midrule
        $o(H) = \mathcal D$ & $\mathcal D$ & $\mathcal N$ & $\mathcal S$ \\
        $o(H) = \mathcal N$ & $\mathcal N$ & $\mathcal S$ & $\mathcal S$ \\
        $o(H) = \mathcal S$ & $\mathcal S$ & $\mathcal S$ & $\mathcal S$ \\\bottomrule
    \end{tabular}
    \caption{Outcome of disjoint union of two graphs}
    \label{table:outcome_of_disjoint_union}
\end{table}

Finally, since $F$ is a spanning subgraph of $G$, we clearly have $o(G) \geq o(F) = \mathcal D$, so that we obtain the following.

\begin{proposition}[{\cite[Lemma 3.1]{BDGLP}}]
    \label{proposition:perfect_1_2-factor_implies_Dominator_wins}
    Let $G$ be a graph. If $G$ admits a perfect $[1,2]$-factor, then $o(G) = \mathcal D$.
\end{proposition}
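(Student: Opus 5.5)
The plan is to prove the proposition by turning a Dominator strategy on the perfect $[1,2]$-factor $F$ into one on $G$. Concretely, I will show: (i) the Dominator wins going second on $K_2$ and on every cycle $C_n$ with $n\geq 3$; (ii) winning going second is preserved under disjoint unions; (iii) winning going second is preserved under adding edges.

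For (i), the $K_2$ case is immediate. Whichever endpoint the Staller takes, the Dominator takes the other one, and it dominates both vertices. For $C_n$ with $n$ even, I would use the pairing strategy from the introduction: pair consecutive vertices $\{v_{2i-1},v_{2i}\}$. Then every vertex either lies in a pair containing a Dominator vertex, or is adjacent to such a pair through its own pair. More precisely, each pair receives a Dominator vertex, so $v_{2i-1}$ and $v_{2i}$ are each dominated by the claimed vertex of their own pair.

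For odd cycles with the Staller moving first, a single fixed pairing no longer covers every vertex, since one vertex is left unpaired. So the Dominator has to react to the Staller's first move. Say the Staller claims $v_1$. The Dominator replies with $v_2$, which dominates $v_1, v_2, v_3$. He then pairs $\{v_3,v_4\},\{v_5,v_6\},\dots,\{v_{n-2},v_{n-1}\}$ and leaves $v_n$ free. Each pair dominates itself. The remaining vertex $v_n$ is adjacent to $v_{n-1}$ and $v_1$; if the Dominator does not get $v_{n-1}$, he gets $v_{n-2}$, which does not help $v_n$. I would therefore fix the pairing more carefully: $\{v_n,v_{n-1}\},\{v_{n-2},v_{n-3}\},\dots,\{v_5,v_4\}$, leaving $v_3$ unpaired. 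Since $v_3$ is already dominated by $v_2$, every vertex is then dominated, and any Staller move on $v_3$ can be answered arbitrarily. Checking this parity bookkeeping is the main (small) obstacle; alternatively, the paper's cited result from the introduction (that the Dominator wins $C_n$ going second) can simply be invoked.

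For (ii), the strategy is to answer in the same component the Staller just played in, using that component's second-player strategy. If that component is full, the Dominator plays anywhere, since an extra claimed vertex never hurts him (this is the standard extra-move argument, cf. \cite[Proposition 2.1.6]{HKSS}). This matches the $\mathcal D$ row of \Cref{table:outcome_of_disjoint_union}.

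For (iii), since $F$ is spanning in $G$, every dominating set of $F$ is also a dominating set of $G$. Hence the Dominator's winning strategy on $F$, played verbatim on $G$, wins, and $o(G)=\mathcal D$.
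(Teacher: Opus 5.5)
Your proposal is correct and follows the same route as the paper: the Dominator wins going second on $K_2$ and on every cycle, this property is preserved under disjoint unions (the $\mathcal D$ row of \Cref{table:outcome_of_disjoint_union}), and $o(G)\geq o(F)$ because $F$ is a spanning subgraph of $G$. Your corrected odd-cycle strategy is valid (answer $v_1$ with $v_2$, then pair $\{v_{2k},v_{2k+1}\}$ for $2\le k\le (n-1)/2$, leaving the already dominated $v_3$ free), so in the write-up you should drop the first, flawed pairing attempt and keep only this one.
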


Much like Tutte's condition for a graph to admit a perfect matching \cite{Tutte1947}, there is a necessary and sufficient condition for a graph to have a perfect $[1,2]$-factor. For a proof of the next theorem, we refer the reader to \cite[Theorem 7.2]{Akiyama_Kano_book}.

\begin{theorem}[Tutte's condition for perfect ${[1,2]}$-factors, \cite{Tutte_1953}]
    \label{theorem:Tutte_condition_for_perfect_1_2-factor}
    A graph $G$ admits a perfect $[1,2]$-factor if and only if $\iso(G-S) \leq |S|$ for all $S \subseteq V(G)$.
\end{theorem}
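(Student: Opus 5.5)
The plan is to prove the two directions separately. Necessity is a direct counting argument. For sufficiency, I will translate perfect $[1,2]$-factors into perfect matchings of an auxiliary bipartite graph and apply Hall's theorem.

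\emph{Necessity.} Let $F$ be a perfect $[1,2]$-factor and $S \subseteq V(G)$. Every $u \in \Iso(G-S)$ has all its $G$-neighbours, and hence all its $F$-neighbours, in $S$. I will build an injection $\Iso(G-S) \to S$ one component of $F$ at a time:
\begin{itemize}
\item If $u$ lies in a $K_2$-component $\{u,s\}$ of $F$, send $u$ to $s \in S$.
\item If $u$ lies in a cycle component $C$, fix a cyclic orientation of $C$ and send $u$ to its successor, which lies in $S$.
\end{itemize}
Distinct vertices on a cycle have distinct successors, so this map is injective. Hence $\iso(G-S) \le |S|$.

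\emph{Sufficiency, reduction.} Let $B$ be the bipartite graph with parts $V' = \{v' : v \in V\}$ and $V'' = \{v'' : v \in V\}$, and edges $u'v''$ whenever $uv \in E(G)$. A perfect matching of $B$ is a permutation $\sigma$ of $V$ with $\sigma(u) \in N(u)$ for every $u$. Since $G$ has no loops, $\sigma$ has no fixed points. Its $2$-cycles are edges of $G$, and its cycles of length $\ge 3$ are cycles of $G$. Together these form a perfect $[1,2]$-factor. So it suffices to verify Hall's condition $|N_G(X)| \ge |X|$ for every $X \subseteq V$, using $\iso(G-S) \le |S|$.

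\emph{Sufficiency, Hall's condition (main obstacle).} Suppose some $X$ violates Hall's condition. Among all such sets, choose $X$ maximising the deficiency $\operatorname{def}(X) = |X| - |N(X)|$, and subject to that with $|X|$ minimal. Write $T = N(X)$, $A = X \setminus T$ and $C = X \cap T$. Every vertex of $A$ has all its neighbours in $T$, and $A$ is independent because $A \cap N(X) = \emptyset$. Taking $S = T \setminus A = T$ would give $A \subseteq \Iso(G-S)$ with $|A| > |T|$ exactly when $C = \emptyset$; in that case the Tutte condition fails and we are done.

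So the difficulty is to show that an extremal violator satisfies $X \cap N(X) = \emptyset$. I would try an uncrossing argument:
\begin{itemize}
\item In $B$ the map $Y \mapsto |N(Y)|$ is submodular, so $\operatorname{def}$ is supermodular.
\item By the symmetry of $B$, the set $T$, viewed on the other side, is also relevant. Comparing $X$ with sets built from $X \cap T$ and $X \setminus T$, I would show that deleting $C$ from $X$ does not decrease the deficiency: $|A| - |N(A)| \ge |X| - |T|$. This contradicts the minimality of $|X|$ unless $C = \emptyset$.
\end{itemize}
If this direct uncrossing becomes messy, the fallback is to derive the statement from Tutte's $f$-factor theorem or from the known "deficiency version" of Hall for symmetric bipartite graphs, as in \cite[Theorem 7.2]{Akiyama_Kano_book}.
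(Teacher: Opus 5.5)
Your necessity argument is correct, and so is the reduction to Hall's condition on the bipartite double cover $B$: a perfect matching of $B$ is a fixed-point-free permutation $\sigma$ with $\sigma(u)\in N(u)$, and its cycle decomposition is a perfect $[1,2]$-factor. The gap is the step you flag as the main obstacle. You assert the inequality $|A|-|N(A)|\ge |X|-|T|$ (``I would show that\dots'') but never prove it. The tools you propose, supermodularity and ``the set $T$ viewed on the other side'', are not turned into an argument. So sufficiency is incomplete as written. Falling back on \cite[Theorem 7.2]{Akiyama_Kano_book} only reproduces what the paper does: it gives no proof of this theorem and simply cites it.

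The missing step needs no extremal choice and no uncrossing. Since $A=X\setminus N(X)$, no vertex of $A$ has a neighbour in $X$, so $N(A)\cap X=\emptyset$. Together with $N(A)\subseteq N(X)=T$, this gives $N(A)\subseteq T\setminus C$. Hence
\[
|A|-|N(A)| \;\ge\; (|X|-|C|)-(|T|-|C|) \;=\; |X|-|T| \;>\; 0 .
\]
So for any Hall violator $X$, take $S=N(X)\setminus X=T\setminus C$ instead of $S=T$. Every $a\in A$ lies outside $S$ and has all its neighbours in $S$, so $A\subseteq \Iso(G-S)$. Then
\[
\iso(G-S)\ \ge\ |X|-|C|\ >\ |T|-|C|\ =\ |S|,
\]
which contradicts Tutte's condition. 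The difficulty came from choosing $S=T$: this charges you for the vertices of $C$, which are not needed to isolate $A$. With this fix, the maximal-deficiency and minimal-$|X|$ choice is superfluous. Your argument then becomes a complete, self-contained proof, where the paper only cites the result.
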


While the Erd\H{o}s-Selfridge Theorem applies to graphs on few vertices with high degrees, the winning condition for the Dominator coming from having a perfect $[1,2]$-factor is particularly useful for almost regular graphs. The Dominator, for example, wins on regular graphs.

\begin{corollary}[{\cite[Corollary 3.5]{BDGLP}}]\label{corollary:Dominator_wins_on_regular_graphs} 
Regular graphs have perfect $[1,2]$-factors. In particular, the Dominator wins on regular graphs going second. 
\end{corollary}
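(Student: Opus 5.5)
Our plan is to verify Tutte's condition from \Cref{theorem:Tutte_condition_for_perfect_1_2-factor}. The second sentence then follows immediately from \Cref{proposition:perfect_1_2-factor_implies_Dominator_wins}.

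Let $G$ be $d$-regular. If $d=0$, then $G$ is edgeless and $\iso(G-S)\leq|S|$ fails as soon as $V(G)\neq\emptyset$, since $S=\emptyset$ leaves every vertex isolated. So we treat the statement as being about $d\geq 1$; this is the only case used later, because an edgeless graph is trivially won by the Staller. Fix $d\geq 1$ and any $S\subseteq V(G)$, and let $I=\Iso(G-S)$. Every vertex $v\in I$ has all of its $d$ neighbours inside $S$. Counting edges between $I$ and $S$ gives
\[
    d\,|I| = e_G(I,S) \leq \sum_{s\in S}\deg(s) = d\,|S|,
\]
because each vertex of $S$ sends at most $d$ edges into $I$. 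Dividing by $d>0$ yields $\iso(G-S)=|I|\leq|S|$. This holds for every $S$, so Tutte's condition is satisfied and $G$ has a perfect $[1,2]$-factor.

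There is no real obstacle here. The only points needing care are the degenerate case $d=0$, which must be excluded or handled via the convention $\mindeg\geq 1$, and the observation that $I$ and $S$ are disjoint, so the double count is legitimate. The final deduction is then the chain from the perfect $[1,2]$-factor to \Cref{proposition:perfect_1_2-factor_implies_Dominator_wins}, which gives $o(G)=\mathcal D$.
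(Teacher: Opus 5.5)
Your proof is correct and is the same double-counting argument the paper uses: it bounds $d\cdot\iso(G-S)=e(S,\Iso(G-S))\le d|S|$ and then invokes Tutte's condition. Your remark that $d=0$ must be excluded is a fair point the paper leaves implicit, since its inequality gives no information there and the statement is false for nonempty edgeless graphs; it is harmless because the corollary is only ever applied with $d\ge 3$.
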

\begin{proof}
Let $G$ be a regular graph of degree $d$, and let $S \subseteq V(G)$. Then 
\begin{align*}
    \iso(G-S) d = e(S, \Iso(G-S)) \leq d|S|, 
\end{align*}
so that $G$ has a perfect $[1,2]$-factor by \Cref{theorem:Tutte_condition_for_perfect_1_2-factor}.
\end{proof}
The modification of perfect $[1,2]$-factors we need takes into account the extra first move the Dominator has as first player. Instead of requiring every component of the spanning subgraph $F$ to be an edge or a cycle, we may also allow a single star to be among the components of $F$.

\begin{definition}[$v$-factor]\label{definition:v-factors}
    Let $G$ be a graph, and let $F$ be a spanning subgraph of $G$. Assume there is a connected component $F_1$ of $F$ that is isomorphic to a star $K_{1,k}$ with $k \geq 0$, and let $v$ be the centre of the star $F_1$. If $F - V(F_1)$ is a perfect $[1,2]$-factor of $G - V(F_1)$, then $F$ is called \emph{$v$-factor of $G$}.
\end{definition}

It follows from \Cref{proposition:perfect_1_2-factor_implies_Dominator_wins} that if $G$ admits a $v$-factor for a vertex $v \in V(G)$, then the Dominator wins on $(G,\{v\},\emptyset)$ going second and thus on $G$ going first.

The next proposition, \Cref{proposition:v-factor_RSI-decomp}, should be seen as an analogue to \Cref{theorem:Tutte_condition_for_perfect_1_2-factor} for $v$-factors. For a graph $G$ to have a perfect $[1,2]$-factor, we have to forbid subsets $S\subseteq V(G)$ with $\iso(G-S) > |S|$. It turns out that for $G$ to have a $v$-factor, we have to forbid the following structure.

\begin{definition}[RSI-decomposition]\label{definition:RSI_decomp}
    Let $G$ be a graph, let $v$ be a vertex of $G$, and let $R,S,I$ be a partition of $V(G)$. If $v\in R$, $I = \iso(G-S) \setminus \{v\}$, and $|I|>|S|$, then the triple $(R,S,I)$ is called an \emph{RSI-decomposition of $G$ separating $v$}. Furthermore, an RSI-decomposition $(R,S,I)$ is called \emph{proper} if every $s\in S$ has at least two distinct neighbours in $I$. 
\end{definition}

The vertex $v$ is usually a fixed vertex of maximum degree, so we often omit explicitly mentioning $v$ and just speak of an \emph{RSI-decomposition of $G$}. 

\begin{proposition}\label{proposition:v-factor_RSI-decomp}
    Let $G$ be a graph and let $v$ be a vertex of $G$. Then $G$ has a $v$-factor if and only if there is no RSI-decomposition of $G$ separating $v$. 
\end{proposition}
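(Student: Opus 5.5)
The forward implication can be proved by a direct counting argument. The converse I would prove by reducing to Tutte's condition (\Cref{theorem:Tutte_condition_for_perfect_1_2-factor}) on an auxiliary graph in which $v$ is ``blown up'' into a clique.

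\emph{$v$-factor $\Rightarrow$ no RSI-decomposition.} Let $F$ be a $v$-factor with star component $F_1$ centred at $v$, and suppose $(R,S,I)$ is an RSI-decomposition separating $v$. Each $u\in I$ is isolated in $G-S$, so all its $G$-neighbours, and hence all its $F$-neighbours, lie in $S$. Since $v\in R$, no $u\in I$ is adjacent to $v$. So $u$ is neither a leaf of $F_1$ nor its centre, and $u$ lies in an edge or cycle component $C$ of $F$.
\begin{itemize}
\item If $C\cong K_2$, then $C$ consists of one vertex of $I$ and one vertex of $S$.
\item If $C$ is a cycle, then the vertices of $I\cap C$ are pairwise non-adjacent on $C$ and both their cycle-neighbours lie in $S$. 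Counting edges of $C$ between $I\cap C$ and $S\cap C$ gives $2|I\cap C|\le 2|S\cap C|$.
\end{itemize}
Summing over components yields $|I|\le|S|$, contradicting $|I|>|S|$.

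\emph{No $v$-factor $\Rightarrow$ RSI-decomposition.} Let $m=\deg(v)+2$. Form $H$ from $G$ by replacing $v$ with a clique on copies $v_1,\dots,v_m$, each joined to all of $N_G(v)$.

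First I would show that $G$ has a $v$-factor if and only if $H$ has a perfect $[1,2]$-factor.

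Given a $v$-factor whose star has leaves $\ell_1,\dots,\ell_k$, build a factor of $H$ as follows.
\begin{itemize}
\item Match $v_i$ to $\ell_i$ for $i\le k$.
\item If at least two copies remain, cover them by an edge or a cycle inside the clique.
\item If exactly one copy remains (so $k=m-1\ge1$), replace the edge $v_k\ell_k$ by the triangle $v_kv_{k+1}\ell_k$.
\end{itemize}

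Conversely, take a perfect $[1,2]$-factor of $H$ and delete the copies. The vertices of $G-v$ that lay in components containing copies now form vertex-disjoint paths whose endpoints were adjacent to copies, hence lie in $N_G(v)$; a path that arose from an edge $v_iu$ is a single vertex $u\in N_G(v)$. Split a path with an even number of vertices into edges. For a path with an odd number of vertices, make one endpoint a leaf of the star at $v$ and split the rest into edges. Components of the factor that avoid the copies are kept. This gives a $v$-factor of $G$.

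So if $G$ has no $v$-factor, Tutte's condition yields $S_H\subseteq V(H)$ with $\iso(H-S_H)>|S_H|$.

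The main obstacle is translating $S_H$ back into $G$, in particular handling the copies.
\begin{itemize}
\item \emph{At least one copy lies outside $S_H$.} If all copies lie in $S_H$, delete them from $S_H$. This lowers $|S_H|$ by $m$, and since $m\ge2$ the copies themselves are not isolated afterwards. At most $\deg(v)<m$ previously isolated vertices can become non-isolated, namely those in $N(v)$. So the strict inequality persists, and we may assume some copy lies outside $S_H$.
\item \emph{Defining the decomposition.} Set $S=S_H\cap V(G)$. Then $v\notin S$, and we put $I=\Iso(G-S)\setminus\{v\}$ and $R=V(G)\setminus(S\cup I)$, so $v\in R$.
\item \emph{Non-copy isolated vertices lie in $I$.} Let $u\neq v$ be an isolated non-copy vertex of $H-S_H$. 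Its $G$-neighbours other than $v$ are also its $H$-neighbours, so they all lie in $S$. Moreover $u\notin N_G(v)$, since otherwise $u$ would be adjacent to the copy outside $S_H$. Hence $u$ is isolated in $G-S$, so $u\in I$.
\item \emph{No isolated copy.} If no copy is isolated in $H-S_H$, then $|I|\ge\iso(H-S_H)>|S_H|\ge|S|$.
\item \emph{One isolated copy.} A copy can be isolated only if all other $m-1$ copies are in $S_H$, because the copies form a clique. Then $|S_H|\ge|S|+m-1\geq |S|+1$, and the non-copy isolated vertices number $\iso(H-S_H)-1$. Hence $|I|\ge\iso(H-S_H)-1>|S_H|-1\ge|S|$.
\end{itemize}
In either case $(R,S,I)$ is an RSI-decomposition of $G$ separating $v$.
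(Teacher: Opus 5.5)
Your proof is correct, but it takes a different route from the paper in both directions.

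\textbf{Forward implication.} You count directly: every vertex of $I$ has all its neighbours in $S$, so it lies in a $K_2$ or cycle component $C$ of the $v$-factor, and each such $C$ satisfies $|I\cap C|\le|S\cap C|$. The paper instead derives this direction from its auxiliary graph $G'$. An RSI-decomposition gives a set violating Tutte's condition in $G'$, while a $v$-factor of $G$ would give a perfect $[1,2]$-factor of $G'$. Your argument is the easy half of Tutte's theorem done by hand, so it needs no gadget at all.

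\textbf{Converse.} The paper deletes $v$ and hangs a copy of the $5$-vertex graph with degrees $(2,3,3,3,3)$ off each $u\in N_G(v)$ by a bridge $uw_u$. Whether a perfect $[1,2]$-factor uses that bridge as a $K_2$ component encodes directly whether $u$ is a leaf of the star. The Tutte set is then pushed off the gadgets by taking it inclusion-minimal and using a local inequality for the gadget, which the paper reads off from the construction.

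You instead replace $v$ by a single clique of $\deg(v)+2$ copies and recover the $v$-factor by splitting the paths left after deleting the copies. You then translate the Tutte set back by an explicit argument. First, strip the copies when all of them lie in $S_H$, which is profitable because $m>\deg(v)$. Second, at most one copy can be isolated, and if one is, it is paid for by the other $m-1\ge 1$ copies in $S_H$.

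Your version uses one global gadget and checks every inequality explicitly. The paper's local gadgets make the factor correspondence slightly more transparent, at the cost of a gadget-specific counting fact stated without detailed verification.

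\textbf{Minor remarks.} When you build a factor of $H$ from a $v$-factor, the case ``exactly one copy remains'' never occurs, since $k\le\deg(v)=m-2$. That direction of the equivalence is in fact unnecessary, because you proved the forward implication directly.
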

\begin{proof}
    Let $H$ be the unique graph with one vertex of degree $2$ and $4$ vertices of degree $3$. We construct a new graph $G'$ from $G$ as follows. For each $u\in N_G(v)$, join a copy $H_u$ of $H$ to $G-v$ and add the edge $uw_u$, where $w_u$ is the degree $2$ vertex of this copy $H_u$ of $H$. We show that $G$ has a $v$-factor if and only if $G'$ has a perfect $[1,2]$-factor.

    \begin{figure}[ht]
        \centering
        \begin{tikzpicture}
            \begin{scope}
                \node[] at(-0.4,3.4) {$G$};
                \node[vertex, label=below:\footnotesize $v$] (v) at (-3.6,0) {};
                \foreach \x [count=\i] in {-2,-1,...,2} {
                    \node[vertex, label=below:\footnotesize $u_{\i}$] (u\i) at (-1.2,\x) {};
                }    
                \draw[rededge] (v)--(u1);
                \draw[rededge] (v)--(u3);
                \draw[rededge] (v)--(u5);
                \draw[edge] (v)--(u2);
                \draw[edge] (v)--(u4);
        
                \node[vertex] (p1) at (-0.5,-0.25) {};
                \node[vertex] (p2) at (-0.5,-1.75) {};
                \node[vertex] (p3) at (0.4,-0.6) {};
                \node[vertex] (p4) at (0.4,-1.4) {};
                \draw[rededge] (u2)--(p1);
                \draw[rededge] (p1)--(p3);
                \draw[rededge] (p3)--(p4);
                \draw[rededge] (p4)--(p2);
                \draw[rededge] (p2)--(u2);
        
                \node[vertex] (m) at (-0.4,1) {};
                \draw[rededge] (u4)--(m);
        
                \node[vertex] (t1) at (0.3,1.1) {};
                \node[vertex] (t2) at (0.1, 2.1) {};
                \node[vertex] (t3) at (1, 1.8) {};
                \draw[rededge] (t1)--(t2);
                \draw[rededge] (t2)--(t3);
                \draw[rededge] (t3)--(t1);
        
                \node[vertex] (b1) at (1,0.6) {};
                \node[vertex] (b2) at (1.2,-0.4) {};
                \draw[rededge] (b1)--(b2);
            \end{scope}
        
            \begin{scope}[xshift=8cm]
                \node[] at(-1,3.4) {$G'$};
        
                \foreach \x [count=\i] in {-2,-1,...,2}
                {
                    \node[vertex, label=below:\footnotesize $u_{\i}$] (u\i) at (-1.2,\x) {};
                    \node[vertex] (k1\i) at (-2.9,\x) {};
                    \node[vertex] (k2\i) at ($(k1\i)+(-0.4,0.3)$) {};
                    \node[vertex] (k3\i) at ($(k1\i)+(-0.4,-0.3)$) {};
                    \node[vertex] (k4\i) at ($(k2\i)+(-0.6,0)$) {};
                    \node[vertex] (k5\i) at ($(k3\i)+(-0.6,0)$) {};
                    \draw[edge] (k2\i)--(k5\i) (k3\i)--(k4\i);
                    \draw[rededge] (k2\i)--(k4\i) (k3\i)--(k5\i);
                }
                \foreach \i in {2,4}
                {
                    \draw[rededge] (k1\i)--(k2\i) (k4\i)--(k5\i) (k3\i)--(k1\i);
                    \draw[edge] (u\i)--(k1\i);
                }
        
                \foreach \i in {1,3,5}
                {
                    \draw[edge] (k1\i)--(k2\i) (k4\i)--(k5\i) (k3\i)--(k1\i);
                    \draw[rededge] (u\i)--(k1\i);
                }
                \node[vertex] (p1) at (-0.5,-0.25) {};
                \node[vertex] (p2) at (-0.5,-1.75) {};
                \node[vertex] (p3) at (0.4,-0.6) {};
                \node[vertex] (p4) at (0.4,-1.4) {};
                \draw[rededge] (u2)--(p1);
                \draw[rededge] (p1)--(p3);
                \draw[rededge] (p3)--(p4);
                \draw[rededge] (p4)--(p2);
                \draw[rededge] (p2)--(u2);
        
                \node[vertex] (m) at (-0.4,1) {};
                \draw[rededge] (u4)--(m);
        
                \node[vertex] (t1) at (0.3,1.1) {};
                \node[vertex] (t2) at (0.1, 2.1) {};
                \node[vertex] (t3) at (1, 1.8) {};
                \draw[rededge] (t1)--(t2);
                \draw[rededge] (t2)--(t3);
                \draw[rededge] (t3)--(t1);
        
                \node[vertex] (b1) at (1,0.6) {};
                \node[vertex] (b2) at (1.2,-0.4) {};
                \draw[rededge] (b1)--(b2);
            \end{scope}
        \end{tikzpicture}
        \caption{Red edges in $G$ and $G'$ are the edges of the perfect $v$-factor and perfect $[1,2]$-factor, respectively. The vertex $v$ of $G$ is labelled, and the neighbours of $v$ in $G$ are both labelled in $G$ and $G'$ by $u_1$ to $u_5$.}
        \label{figure:proof_of_RSI_prop}
    \end{figure}

    First, assume that $G'$ has a perfect $[1,2]$-factor $F'$. Let $F_1'$ be a component of $F'$ containing a vertex from $V(G) \setminus N_{G}[v]$. Then $V(F_1') \subseteq V(G) \setminus \{v\}$, as otherwise, $F_1'$ contains an edge of the form $uw_u$ for some $u \in N_G(v)$.
    Then $F_1'$ contains at least two edges and hence is a cycle, contradicting the fact that $uw_u$ is a bridge of $G'$.
    This shows that the connected components of $F'[V(G) \setminus \{v\}]$ in $G$ are either a $K_2$, a cycle graph, or an isolated vertex $u \in N_G(v)$. Let $U = \Iso(F'[V(G) \setminus \{v\}]) \subseteq N_G(v)$, and let $F$ be the disjoint union of the graph $F'[V(G) \setminus \{v\}] - U$ and the star with centre $v$ and the vertices of $U$ as leaves. By the above, $F$ is a $v$-factor of $G$.
    
    Now assume that $G$ has a $v$-factor $F$. Let $F_1' = F[V(G)\setminus N_{F}[v]]$, for $u \in N_F(v)$ let $F_u'$ denote a perfect $[1,2]$-factor of $G'[V(H_u) \cup \{u\}]$, and for $u \in N_G(v) \setminus N_F(v)$ denote by $F_u'$ a perfect $[1,2]$-factor of $H_u$. Then the disjoint union of $F_1'$ and all the $F_u'$ for $u \in N_G(v)$ is a perfect $[1,2]$-factor of $G'$.
    
    By \Cref{theorem:Tutte_condition_for_perfect_1_2-factor}, it remains to prove that $G$ has an RSI-decomposition separating $v$ if and only if there is $S \subseteq V(G')$ with $\iso(G'-S) > |S|$. Given an RSI-decomposition $(R,S,I)$ of $G$ separating $v$, note that $S \subseteq V(G) \setminus \{v\} \subseteq V(G')$ fulfils $\iso(G'-S) \geq \iso(G-S) \geq |I| > |S|$.
    
    For the other direction, let $S \subseteq V(G')$ be minimal with respect to inclusion such that $\iso(G'-S) > |S|$. By construction of $H$, we have $|I\cap (V(H_u)\cup \{u\})| \leq |S\cap V(H_u)|$ for all $u\in N_G(v)$, so $S_1=S\setminus V(H_u)$ also satisfies $\iso(G'-S_1)>|S_1|$, as
        \begin{align*}
            \iso(G'-S_1) \geq \iso(G'-S)-|I\cap (V(H_u)\cup \{u\})| > |S| - |S\cap V(H_u)| = |S_1|.
        \end{align*}
    By minimality of $S$, this means that $S=S_1$ and therefore $S\cap V(H_u) = \emptyset$ and hence also $I\cap (V(H_u)\cup \{u\}) =\emptyset$. This means that $S \subseteq V(G)\setminus \{v\}$ and $I = \iso(G-S) \setminus \{v\}$. Putting things together, we see that $(V(G) \setminus (S \cup I), S, I)$ is an RSI-decomposition of $G$ separating $v$.
\end{proof}
Often it is useful to require an RSI-decomposition to be proper. This is, in fact, no restriction.

\begin{lemma}\label{lemma:existence_of_proper_RSI}
    Let $G$ be a graph, $v$ be a vertex of $G$, and assume that there is an RSI-decomposition of $G$ separating $v$. Then there is a proper RSI-decomposition of $G$ separating $v$.
\end{lemma}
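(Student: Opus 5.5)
Take an RSI-decomposition $(R,S,I)$ of $G$ separating $v$ with $|S|$ minimal among all RSI-decompositions separating $v$. I claim it is proper. If not, some $s\in S$ has at most one neighbour in $I$. Move $s$ out of $S$: set $S' = S\setminus\{s\}$ and let $I' = \Iso(G-S')\setminus\{v\}$, $R' = V(G)\setminus(S'\cup I')$. Then $(R',S',I')$ is a partition of $V(G)$ with $I'$ of the required form. We still need $v\in R'$, which holds because $v\notin S'$ and $v\notin I'$ by definition. Since $|S'|<|S|$, it only remains to check $|I'|>|S'|$ to contradict minimality.

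To compare $I'$ with $I$, note that removing fewer vertices can only destroy isolation through edges to $s$. Every $x\in I$ that is not adjacent to $s$ is still isolated in $G-S'$, since its only neighbours in $G$ lay in $S$ and none of them is $s$. Hence $I'\supseteq I\setminus N(s)$, and $|I\cap N(s)|\le 1$ gives $|I'|\ge |I|-1$. (If $s$ itself becomes isolated in $G-S'$, it even lies in $I'$, but we do not need this.) So $|I'|\ge |I|-1>|S|-1=|S'|$ by integrality, using $|I|\ge |S|+1$. This gives an RSI-decomposition separating $v$ with a smaller $S$, a contradiction.

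The only delicate point is confirming that the new triple is a valid decomposition, namely that $v\in R'$ and that $I'$ is exactly $\Iso(G-S')\setminus\{v\}$. Both hold by construction. An alternative is to iterate the removal step, which terminates because $|S|$ strictly decreases, and in the worst case ends at $S=\emptyset$; this is still consistent, since the inequality $|I|>|S|$ is preserved at every step.
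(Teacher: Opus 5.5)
Your proof is correct and follows the same route as the paper: take an RSI-decomposition separating $v$ with $|S|$ minimal, remove an offending $s$ from $S$, and observe that $|I|$ drops by at most one, so the new triple is again an RSI-decomposition with smaller $S$. Your choice $I'=\Iso(G-S')\setminus\{v\}$ is slightly more careful than the paper's explicit triple. The paper puts $s$ into $R_1$, which violates the condition on $I_1$ in the edge case where all neighbours of $s$ lie in $S$, because then $s$ is isolated in $G-S_1$.
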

\begin{proof}
    Let $(R,S,I)$ be an RSI-decomposition of $G$ separating $v$, such that $|S|$ is minimal, and assume there was an $s$ with $|N(s) \cap I| \leq 1$. 
    Consider the triple $(R_1,S_1,I_1)$ with $R_1 = R \cup \{s\} \cup (N(s)\cap I)$, $S_1 = S \setminus \{s\}$, and $I_1 = I \setminus (N(s)\cap I)$. 
    It is easy to check that this triple again forms an RSI-decomposition of $G$ separating $v$, but with $|S_1| < |S|$, contradicting the minimality of $|S|$. 
\end{proof}

The form in which we use \Cref{proposition:v-factor_RSI-decomp} most often is as follows.

\begin{corollary}\label{corollary:Staller_wins_then_proper_RSI-decomp}
    Let $G$ be a graph with $o(G) = \mathcal S$, and let $v \in V(G)$. Then there is a proper RSI-decomposition of $G$ separating $v$.
\end{corollary}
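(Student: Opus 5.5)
We argue by contraposition, chaining the three results just established. Suppose that $G$ admits no RSI-decomposition separating $v$. By \Cref{proposition:v-factor_RSI-decomposition} (applied with the given vertex $v$), $G$ then has a $v$-factor $F$.

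Next we turn $F$ into a winning strategy for the Dominator going first. Let $F_1$ be the star component of $F$ with centre $v$. The Dominator claims $v$ on his first move; this dominates every vertex of $F_1$, since each leaf of the star is adjacent to $v$ in $G$. The remaining graph $F - V(F_1)$ is a perfect $[1,2]$-factor of $G - V(F_1)$. Each of its components is a $K_2$ or a cycle, and on each of these the Dominator wins going second. Combining components via \Cref{table:outcome_of_disjoint_union}, the Dominator wins going second on $F-V(F_1)$. So after his first move, the Dominator plays this second-player strategy on $V(G)\setminus V(F_1)$. Whenever the Staller plays inside $V(F_1)\setminus\{v\}$, the Dominator answers with an arbitrary move; extra claimed vertices never hurt the Dominator, since domination is monotone. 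At the end, every vertex of $F_1$ is dominated by $v$, and every other vertex is dominated within $F - V(F_1)$, which is a subgraph of $G$. Hence $o_{\mathcal D}(G)=\mathcal D$, that is, $o(G)\geq\mathcal N$, contradicting $o(G)=\mathcal S$. This is exactly the remark made after \Cref{definition:v-factors}.

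Therefore some RSI-decomposition of $G$ separating $v$ exists, and \Cref{lemma:existence_of_proper_RSI} upgrades it to a proper one. The only point needing care is the monotonicity justification in the strategy: the Staller may make moves inside $F_1$, and we must confirm these cannot harm the Dominator's pairing or second-player strategy on the rest of the graph. This is routine, since the strategy on $F-V(F_1)$ only reacts to Staller moves in its own components.
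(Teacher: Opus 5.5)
Your proof is correct and is the same argument as the paper's: $o(G)=\mathcal S$ rules out a $v$-factor, so \Cref{proposition:v-factor_RSI-decomp} gives an RSI-decomposition separating $v$, and \Cref{lemma:existence_of_proper_RSI} makes it proper. The only difference is that you write out the first-player strategy behind the remark after \Cref{definition:v-factors}, including the routine point that extra Dominator moves cannot hurt him; the paper takes this step for granted.
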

\begin{proof}
    Since $o(G) = \mathcal S$, there is no $v$-factor of $G$. So there has to be an RSI-decomposition of $G$ separating $v$ by \Cref{proposition:v-factor_RSI-decomp}. By \Cref{lemma:existence_of_proper_RSI}, there now is a proper RSI-decomposition of $G$ separating $v$.
\end{proof}

As a first application of \Cref{corollary:Staller_wins_then_proper_RSI-decomp}, we prove \Cref{thm:plusone}, which we repeat here for convenience.

\plusone*
\begin{proof}[Proof of \Cref{thm:plusone}]
    By \Cref{proposition:tiny_betas}, we may assume that $d \geq 3$. Let $G$ be a graph on $n \leq 2^{d+1}+1$ vertices of minimum degree $d$, and by contradiction, assume $o(G) = \mathcal S$. If $n < 2^{d+1}$, this is a contradiction to the Erd\H{o}s-Selfridge Theorem, and for $n=2^{d+1}$ the Erd\H{o}s-Selfridge Theorem yields that $G$ must be regular, in which case we get a contradiction to \Cref{corollary:Dominator_wins_on_regular_graphs}. So from now on we can assume that $n = 2^{d+1}+1$. In this case, again by the Erd\H{o}s-Selfridge Theorem, the degree sequence of $G$ has to be 
    \[(\underbrace{d,\dots,d}_{2^{d+1}},D) \text{ or } (\underbrace{d,\dots,d}_{2^{d+1}-1},d+1,d+1),\]
    where $D \geq d$. Let $v$ be a maximum degree vertex of $G$, and let $(R,S,I)$ be an RSI-decomposition of $G$ separating $v$, which exists by \Cref{corollary:Staller_wins_then_proper_RSI-decomp}. Double counting the number of edges between $S$ and $I$ gives
    \begin{align}\label{align:double_counting}
        d|S| + d \leq d|I| \le e(S,I) \leq \sum_{s\in S} \deg(s).
    \end{align}
    In particular, there must be at least one vertex of degree greater than $d$ in $S$. In the first case of $G$ having $2^{d+1}$ vertices of degree $d$, this is impossible, since the only vertex of degree potentially greater than $d$ is $v \in R$. In the second case, again one of the vertices of degree $d+1$ is in $R$, so that by \Cref{align:double_counting} we must have $d = 1$, contradicting $d \geq 3$.
\end{proof}

\section{Low Surplus: RSI-decompositions}\label{section:low_surplus}

In this section, we prove that $o(G) \geq \mathcal N$ for graphs $G$ on at most $21$ vertices and of minimum degree $3$, which are in some sense almost regular. We use the following measure of how regular a graph is. 
\begin{definition}\label{definition:surplus}
    Let $G$ be a graph, let $v\in V(G)$, and let $d$ be an integer.
    We define the \emph{surplus of $G$ with respect to $d$  and the vertex $v$} to be 
    \[\surp{d}{G}{v}=\sum_{x\in V(G)\setminus \{v\}} |\deg(x)-d|.\]
\end{definition}
In most cases $v$ is a maximum degree vertex and $d = 3$ is the minimum degree. In these cases, we simply write $\sur(G) = \surp{d}{G}{v}$ and call it \emph{surplus of $G$}. The surplus only depends on the degree sequence, so we may also speak of the surplus of a degree sequence.

Before stating the main theorem of this section, let us first introduce a convenient reduction telling us that for proving lower bounds for $\beta(3)$, we only have to consider graphs that do not have edges between vertices of degree greater than $3$.

\begin{definition}
    Let $G$ be a graph with $\mindeg(G) = d$. Then $G$ is called \emph{reduced} if $uv \in E(G)$ implies $\deg_G(u) = d$ or $\deg_G(v) = d$.
\end{definition}

\begin{lemma}\label{lemma:reduction_to_reduced_graphs}
    Let $n$ and $d$ be positive integers, and assume that $o(G) \geq \mathcal N$ for all reduced graphs $G$ on $n$ vertices of minimum degree $d$. Then $o(G) \geq \mathcal N$ for all graphs $G$ on $n$ vertices of minimum degree $d$.
\end{lemma}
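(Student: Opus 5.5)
The plan is to use monotonicity of the outcome under adding edges, together with deleting edges between high-degree vertices. Let $G$ be a graph on $n$ vertices with $\mindeg(G)=d$. Call an edge $uw$ \emph{removable} if $\deg(u)>d$ and $\deg(w)>d$. Repeatedly delete removable edges, recomputing degrees after each deletion, until none remain. Each deletion lowers two degrees that were both at least $d+1$, so afterwards they are at least $d$. Hence the minimum degree never drops below $d$. Since $G$ had a vertex of degree $d$ that is never touched (edges at degree-$d$ vertices are never removed), the minimum degree stays exactly $d$. The process terminates because the edge count decreases. The final graph $G_0$ is a spanning subgraph of $G$ on the same $n$ vertices, with $\mindeg(G_0)=d$, and by construction every edge $uv$ has $\deg_{G_0}(u)=d$ or $\deg_{G_0}(v)=d$. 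So $G_0$ is reduced.

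Next, the hypothesis gives $o(G_0)\ge\mathcal N$. I then need the monotonicity fact already used in the paper just before \Cref{proposition:perfect_1_2-factor_implies_Dominator_wins}: if $F$ is a spanning subgraph of $G$, then $o(G)\ge o(F)$. The reason is that any dominating set of $F$ is a dominating set of $G$. So the family of winning sets for the Dominator on $G$ contains that on $F$, and any Dominator strategy that wins on $F$ (with the same first player) also wins on $G$. Applying this with $F=G_0$ and the Dominator moving first gives $o(G)\ge\mathcal N$.

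The only point that needs care is that the reduced graph has minimum degree exactly $d$ and not larger, since the hypothesis only covers graphs of minimum degree $d$. This is handled by the observation that the degree-$d$ vertices of $G$ keep all their edges. Everything else is routine.
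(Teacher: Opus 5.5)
Your proposal is correct and follows the paper's proof: you iteratively delete edges between vertices of degree greater than $d$ to get a reduced spanning subgraph of minimum degree $d$, and then use $o(G)\geq o(G')$ for spanning subgraphs. You also spell out why the minimum degree stays exactly $d$ and why the monotonicity holds, which the paper leaves implicit.
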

\begin{proof}
    Let $G$ be a graph on $n$ vertices with $\mindeg(G) = d$. By iteratively deleting edges between vertices of degree greater than $d$, we find a reduced spanning subgraph $G'$ of $G$. The assertion now follows from $o(G) \geq o(G')$.
\end{proof}

What we prove in this section is the following.

\begin{restatable}{theorem}{betathreesureigth}\label{theorem:beta_3_for_sur_8}
    Let $G$ be a reduced graph on at most $21$ vertices with $\mindeg(G) = 3$ and $\sur(G) \leq 8$. Then $o(G) \geq \mathcal N$. 
\end{restatable}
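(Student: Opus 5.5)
We argue by contradiction and combine the structural results of \Cref{section:RSI_theory} with a finite, computer-assisted enumeration. Let $G$ be a reduced graph on $n\le 21$ vertices with $\mindeg(G)=3$ and $\sur(G)\le 8$, and suppose $o(G)=\mathcal S$. We fix a maximum degree vertex $v$. By \Cref{corollary:Staller_wins_then_proper_RSI-decomp} there is a proper RSI-decomposition $(R,S,I)$ of $G$ separating $v$. By \Cref{cor:ES} we may also assume $\pot(G)\ge 1$. Since all but one vertex have degree close to $3$, this forces $n\ge 16$, and it gives only finitely many degree sequences: surplus at most $8$ on the vertices other than $v$, and $\deg(v)\le n-1$.

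\textbf{Step 1: restrict the RSI structure.} The vertices of $I$ are isolated in $G-S$, so each $x\in I$ has $N(x)\subseteq S$, possibly together with $v$ if $v$ is adjacent to $x$. Double counting $e(S,I)$ as in the proof of \Cref{thm:plusone} gives
\[
3|I|-\deg_I(v)\le e(S,I)\le \sum_{s\in S}\deg(s)\le 3|S|+\sur(G)\quad(\text{with } |I|\ge|S|+1).
\]
This bounds $|S|$ in terms of the surplus and $\deg_I(v)$, and shows that either the surplus is concentrated in $S$ or $v$ has many neighbours in $I$. Properness gives $|N(s)\cap I|\ge2$ for all $s\in S$. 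Since $G$ is reduced, vertices of degree $>3$ in $S$ have all their neighbours of degree $3$. From this we extract a bounded list of possible \enquote{cores} $G[S\cup I]$ together with the edges from $S$ to $R$: these are bipartite graphs between $S$ and $I$ with prescribed degree constraints and few excess edge-endpoints. Each such core has at most about $2|S|+1$ vertices with $|S|\lesssim 8$.

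\textbf{Step 2: split the game.} We want to decide the game while only partially knowing $G[R]$. We use the potential and treat $R$ abstractly: all vertices of $R\setminus\{v\}$ have degree $3$ up to the leftover surplus, so their contribution to the Erd\H{o}s--Selfridge potential is essentially fixed. Concretely, for each core we set up a game state on the core plus a \enquote{boundary} encoding how many neighbours in $R$ each $S$-vertex has, and we replace $R$ by a pessimistic potential term. We then run a modified version of \Cref{algorithm:outcome_game_state} in which the Dominator may also choose to stop local play and rely on $\pot<1$ globally. If in every case the Dominator, moving first, can force the global potential below $1$ (or win outright), then \Cref{theorem:ES_for_gs} gives $o(G)\ge\mathcal N$. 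The remaining cases with small $n$ or with structures not covered this way are handled by direct enumeration of reduced graphs with \nauty\ and \Cref{algorithm:outcome_game_state}.

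\textbf{Main obstacle.} The hardest step is making the separation in Step 2 sound. Play in $R$ interacts with play in the core through the edges from $S$ to $R$ and through $v$, so the pessimistic bound on $R$'s potential must survive both players' moves there. I would first try a simple accounting: each Staller move in $R$ is answered by the Dominator in $R$ using the potential-greedy rule, which by \Cref{align:pot_change} and \Cref{align:pot_monotone} never increases the potential restricted to $R$. This reduces the game to the core with a fixed additive budget $\pot_R$. If that bound is too lossy for the surplus-$7$ and surplus-$8$ sequences, the fallback is to enumerate the $R$-side as well, but only for those finitely many residual degree sequences.
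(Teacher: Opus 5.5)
Your Step~2 is where the proof has to happen, and it is left open. You name the interaction between play on $R$ and play on $S\cup I$ as the main obstacle, but you offer only a tentative accounting and an unspecified fallback. As it stands this is a research plan rather than a proof.

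The missing idea is that this interaction never has to be controlled. Adding edges can only help the Dominator, so $o(G)\ge o(F)$ for every spanning subgraph $F$ of $G$. Delete $E(R,S)$ (at most five edges by \Cref{lemma:surplus_in_RSI}) and the few edges inside $S$. When some vertices of $I$ have degree above $3$, also delete one or two $S$--$I$ edges at them. This leaves a disjoint union $G_R\cup G_{SI}$, and by \Cref{table:outcome_of_disjoint_union} it suffices that one part has outcome $\mathcal D$ and the other outcome at least $\mathcal N$.

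The same surplus bookkeeping (\Cref{lemma:surplus_in_RSI} and \Cref{lemma:replace_v_by_w_in_R}) confines the two parts to small, independently checkable families (\Cref{proposition:preprocessed_RSI}). First the cases $|I|\ge|S|+2$, $N(v)\subseteq S$ and disconnected $G$ are disposed of separately. Then $G_R$ has no isolated vertices, $|R|\le 14$ and $\sur(G_R)\le 5$. The graph $G_{SI}$ is bipartite with $|I|=|S|+1\le 10$, all $I$-degrees equal to $3$ and all $S$-degrees at least $2$. The R- and SI-lemmas are then finite searches on at most $14$ and $19$ vertices respectively.

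Your proposed accounting would not replace this, for three reasons.

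\textbf{The monotonicity you invoke runs the wrong way.} The inequality obtained from \eqref{align:pot_change} and \eqref{align:pot_monotone} says that a greedy Dominator move \emph{followed by} an arbitrary Staller move does not increase the potential. For a Staller move followed by a greedy reply it fails: after the Staller claims $s$, there need not be a free vertex of potential at least $\pot_{(G,D,S)}(s)$. Answering only inside $R$, your budget is therefore $\pot(G_R)<\frac12$, and the Dominator's single extra first move cannot serve both parts.

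\textbf{Potential bounds on $R$ are not enough.} Even $\pot(G_R)<1$ fails for relevant $G_R$. The sequence $(2,2,2,2,3,\dots,3)$ of length $12$ in \Cref{table:R_12} has potential exactly $1$. That table consists by construction of the sequences that neither the Erd\H{o}s--Selfridge theorem nor its one-step version decides, and the paper settles them by exhaustive search.

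\textbf{Some cases need more than a split.} The plain split does not always certify the result. This happens for the six exceptional graphs of \Cref{figure:exceptional_R_graphs}, and for $|R|\in\{13,14\}$ with degree-$1$ vertices in $G_R$. There the paper either moves degree-$1$ vertices of $G_R$ into the $S\cup I$ side, or uses the $K_{3,4}$ strategy of SI-Lemma~\ref{si-lemma:K34_special_strat}. That strategy guarantees one of two prescribed vertices of $S$, and so dominates a pendant vertex of $G_R$ across an $R$--$S$ edge. Your plan has no concrete mechanism for these cases. Falling back on enumerating reduced graphs on up to $21$ vertices is exactly what the decomposition is designed to avoid.

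A small slip: since $I\subseteq \Iso(G-S)$ and $v\notin S$, no vertex of $I$ is adjacent to $v$, so $\deg_I(v)=0$.
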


The rough plan for the proof is as follows. Assume that $G$ contradicts \Cref{theorem:beta_3_for_sur_8}, so that by \Cref{corollary:Staller_wins_then_proper_RSI-decomp} there is a proper RSI-decomposition of $G$ separating a maximum degree vertex $v \in V(G)$. Since $\sur(G)\leq 8$, the number of edges between $R$ and $S$ is at most $5$ by the subsequent \Cref{lemma:surplus_in_RSI}. In most cases, deleting the edges in $E(R,S)$ yields two graphs, $G_R$ and $G_{SI}$, one with outcome $\mathcal D$ and the other one with outcome at least $\mathcal N$, contradicting $o(G) = \mathcal S$. The remaining cases are handled similarly. Checking that these two smaller parts have the desired outcomes happens in the subsequent R-lemmas and SI-lemmas, whose proofs heavily rely on brute-force computations.

Given an RSI-decomposition $(R,S,I)$ of a graph $G$ with minimum degree $d$, we often need to bound the number of edges between $R$ and $S$ or the number of vertices of degree less than $d$ in $G[R]$. The following lemma provides us with a useful equality, giving us these desired bounds in terms of the surplus $\surp{d}{G}{v}$.

\begin{lemma}\label{lemma:surplus_in_RSI}
    Let $G$ be a graph with $\mindeg(G) = d$, let $v$ be a vertex of $G$, and let $(R,S,I)$ be an RSI-decomposition of $G$ separating $v$. Then 
    \begin{align}\label{align:surplus_on_RSI}
        \surp{d}{G}{v} = \sum_{r \in R \setminus \{v\}} (\deg(r) - d) + e(R,S) + 2e(S) + d(|I|-|S|) + 2 \sum_{i \in I} (\deg(i) - d).
    \end{align}
\end{lemma}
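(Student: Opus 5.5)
The identity is pure bookkeeping of degrees, so the plan is to split the defining sum of $\surp{d}{G}{v}$ according to the partition $R,S,I$ and rewrite the $S$-part using edge counts. First, since $\mindeg(G)=d$, every term satisfies $\deg(x)-d\geq 0$, so the absolute values in \Cref{definition:surplus} can be dropped. Since $v\in R$ and $R,S,I$ partition $V(G)$, we get
\[
    \surp{d}{G}{v}=\sum_{r\in R\setminus\{v\}}(\deg(r)-d)+\sum_{s\in S}(\deg(s)-d)+\sum_{i\in I}(\deg(i)-d).
\]
The first sum already appears in \Cref{align:surplus_on_RSI}, so it remains to rewrite the middle sum.

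The key structural input is the definition of an RSI-decomposition: $I\subseteq\Iso(G-S)$. Hence every vertex $i\in I$ has all its neighbours in $S$. In particular there are no edges between $I$ and $R$, none inside $I$, and $\deg(i)=e(\{i\},S)$ for each $i\in I$, so that
\[
    e(S,I)=\sum_{i\in I}\deg(i).
\]
Then I would count the degree sum over $S$ by where the edges go. Each edge inside $S$ is counted twice, so
\[
    \sum_{s\in S}\deg(s)=e(R,S)+2e(S)+e(S,I)=e(R,S)+2e(S)+\sum_{i\in I}\deg(i).
\]

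Subtracting $d|S|$ and writing $\sum_{i\in I}\deg(i)=\sum_{i\in I}(\deg(i)-d)+d|I|$ gives
\[
    \sum_{s\in S}(\deg(s)-d)=e(R,S)+2e(S)+d(|I|-|S|)+\sum_{i\in I}(\deg(i)-d).
\]
Adding back the $I$-sum from the first display doubles the last term and yields exactly \Cref{align:surplus_on_RSI}. There is no real obstacle. The only points to state carefully are that $v$ lies in $R$, so it is correctly excluded from the $R$-sum and absent from the $S$- and $I$-sums, and that isolation in $G-S$ forces $N(i)\subseteq S$ for $i\in I$. Note that the properness of the decomposition and the inequality $|I|>|S|$ are not needed for the identity itself.
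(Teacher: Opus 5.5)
Your proposal is correct and follows essentially the same route as the paper: use $I\subseteq\Iso(G-S)$ to write $\sum_{s\in S}\deg(s)=e(R,S)+2e(S)+\sum_{i\in I}\deg(i)$, split the surplus over $R\setminus\{v\}$, $S$, $I$, and substitute. You also make explicit that the absolute values can be dropped because $\mindeg(G)=d$, a point the paper uses without comment.
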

\begin{proof}
    Note that since $I \subseteq \iso(G-S)$, we have
    \begin{align*}
        \sum_{s \in S} \deg(s) = e(R,S) + 2e(S) + \sum_{i \in I} \deg(i).
    \end{align*}
    Using the definition of surplus, we therefore have
    \begin{align*}
        \surp{d}{G}{v} &=\sum_{r \in R \setminus \{v\}} (\deg(r) - d) + \sum_{s \in S} \deg(s) - d|S| + \sum_{i \in I} (\deg(i) - d)\\
        &=\sum_{r \in R \setminus \{v\}} (\deg(r) - d) + e(R,S) + 2e(S) + d(|I|-|S|) + 2 \sum_{i \in I} (\deg(i) - d).
    \end{align*}
\end{proof}
In fact, we only use the lower bound on the surplus in \Cref{align:surplus_on_RSI}; that is, given an upper bound on the surplus, we use it to bound $e(R,S)$, for example. A very common application of \Cref{lemma:surplus_in_RSI} tells us the following. Given a graph $G$ and a vertex $v \in V(G)$ such that $\mindeg(G) = 3$ and $\sur_3(G,v) \leq 8$, and given an RSI-decomposition $(R,S,I)$ separating $v$, we have $e(R,S) \leq 5$. Note that this follows because each summand on the right of \Cref{align:surplus_on_RSI} is non-negative and because $|I|>|S|$.

The next lemma shows that the surplus of $G_R$ can be bounded in terms of the surplus of $G$, even though a maximum degree vertex in $G_R$ is not necessarily a maximum degree vertex in $G$. 

\begin{lemma}\label{lemma:replace_v_by_w_in_R}
    Let $G$ be a graph with $\mindeg(G) = d$, let $v$ be a maximum degree vertex of $G$, and let $(R,S,I)$ be an RSI-decomposition of $G$ separating $v$. If $w$ denotes a maximum degree vertex in $G_R= G[R]$, then
    \[
        \surp{d}{G_R}{w} \leq \surp{d}{G}{v} - d(|I|-|S|).
    \]
    Furthermore, if equality holds in the above inequality, we must have $\deg_{G_R}(w) = \deg_G(v)$. 
\end{lemma}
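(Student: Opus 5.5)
The plan is to compare $\surp{d}{G_R}{w}$ with the $R$-part of the identity in \Cref{lemma:surplus_in_RSI}. Set $\Sigma_R:=\sum_{r\in R\setminus\{v\}}(\deg_G(r)-d)$. Since every term other than the first and $d(|I|-|S|)$ on the right of \Cref{align:surplus_on_RSI} is non-negative, that identity gives
\[
\Sigma_R + e(R,S) \le \surp{d}{G}{v} - d(|I|-|S|).
\]
It therefore suffices to show
\[
\surp{d}{G_R}{w} \le \Sigma_R + e(R,S).
\]

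Since $R$ has no neighbours in $I$, we have $\deg_{G_R}(r)=\deg_G(r)-e(\{r\},S)$ for each $r\in R$. Write $s_r:=e(\{r\},S)$, so that $\sum_{r\in R} s_r=e(R,S)$.

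For $r\neq w$, the triangle inequality gives
\[
|\deg_{G_R}(r)-d| \le (\deg_G(r)-d)+s_r,
\]
using $\deg_G(r)\ge d$. Summing over $r\in R\setminus\{w\}$ yields
\[
\surp{d}{G_R}{w}\le \sum_{r\in R\setminus\{w\}}(\deg_G(r)-d)+\sum_{r\ne w}s_r .
\]
This sum runs over $R\setminus\{w\}$, whereas $\Sigma_R$ runs over $R\setminus\{v\}$.

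\emph{Case $w=v$.} The sums agree, and we get the bound with an extra slack of $s_v\ge 0$.

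\emph{Case $w\neq v$.} The difference between the two degree sums is $(\deg_G(v)-d)-(\deg_G(w)-d)=\deg_G(v)-\deg_G(w)$. This is non-negative because $v$ has maximum degree in $G$, so it has the wrong sign. This is the main obstacle, and I would handle it using the maximality of $w$ in $G_R$:
\[
\deg_G(v)-s_v=\deg_{G_R}(v)\le \deg_{G_R}(w)=\deg_G(w)-s_w .
\]
Hence $\deg_G(v)-\deg_G(w)\le s_v-s_w$. The surplus bound then becomes
\[
\surp{d}{G_R}{w}\le \Sigma_R+(\deg_G(v)-\deg_G(w))+e(R,S)-s_w\le \Sigma_R+e(R,S)+s_v-2s_w .
\]
This is still off by $s_v$. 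To remove it, I would refine the estimate for the term $r=v$: I would use $|\deg_{G_R}(v)-d|=\deg_G(v)-s_v-d$ exactly, provided $\deg_{G_R}(v)\ge d$, instead of bounding it crudely by $\deg_G(v)-d+s_v$. That refinement saves $2s_v$, leaving a bound of $\Sigma_R+e(R,S)-s_v-2s_w$, which suffices. If instead $\deg_{G_R}(v)<d$, then $\deg_{G_R}(w)\ge \deg_{G_R}(v)$ and the triangle-inequality accounting must be redone. I expect a small case check on the sign of $\deg_{G_R}(r)-d$ for $r\in\{v,w\}$ to close the argument, and this bookkeeping is where I would spend the care.

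For the equality statement, I would trace which inequalities must be tight. Equality forces every nonnegative slack in the chain to vanish, including the case-$w\neq v$ slack, and this should force $\deg_G(v)-\deg_G(w)$ to be absorbed exactly. Combined with tightness in $\deg_{G_R}(v)\le\deg_{G_R}(w)$ and $s_w=0$, this gives $\deg_{G_R}(w)=\deg_G(w)=\deg_G(v)$. In the case $w=v$, equality forces $s_v=0$, so again $\deg_{G_R}(w)=\deg_G(v)$.
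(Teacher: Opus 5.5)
Your proposal is correct once the one subcase you left open is written out; it reaches the same bound as the paper with a slightly clumsier bookkeeping.

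\textbf{The open subcase.} Your reduction to $\surp{d}{G_R}{w}\le \Sigma_R+e(R,S)$ is right, as is the per-vertex bound $|\deg_{G_R}(r)-d|\le(\deg_G(r)-d)+s_r$. The cases $w=v$ and $w\neq v$ with $\deg_{G_R}(v)\ge d$ are also handled correctly. The remaining subcase, $w\neq v$ with $\deg_{G_R}(v)<d$, is not an obstacle. There $|\deg_{G_R}(v)-d|=d-\deg_G(v)+s_v$ exactly, so
\[
\surp{d}{G_R}{w}\le\sum_{r\in R\setminus\{v,w\}}\bigl(\deg_G(r)-d+s_r\bigr)+d-\deg_G(v)+s_v
=\Sigma_R+e(R,S)-s_w-(\deg_G(w)-d)-(\deg_G(v)-d).
\]
This is at most $\Sigma_R+e(R,S)$, and it does not even use the maximality of $w$.

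\textbf{The equality statement.} In this subcase, equality forces $s_w=0$ and $\deg_G(v)=\deg_G(w)=d$, so again $\deg_{G_R}(w)=\deg_G(v)$. However, here $\deg_{G_R}(v)<d=\deg_{G_R}(w)$. So the mechanism is not the tightness of $\deg_{G_R}(v)\le\deg_{G_R}(w)$ that your equality sketch invokes. In the subcase $\deg_{G_R}(v)\ge d$, your bound is $\Sigma_R+e(R,S)+(\deg_G(v)-\deg_G(w))-2s_v-s_w$. Equality, together with $\deg_G(v)-\deg_G(w)\le s_v-s_w$, gives $s_v=s_w=0$ and $\deg_G(v)=\deg_G(w)=\deg_{G_R}(w)$, as you say.

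\textbf{Comparison with the paper.} You use the same ingredients as the paper: the lower bound $\Sigma_R+e(R,S)$ from the surplus identity, the per-vertex triangle inequality, and maximality of $v$ in $G$ and of $w$ in $G_R$. The paper orders them more economically. It applies the per-vertex bound to $R\setminus\{v\}$ rather than $R\setminus\{w\}$. This leaves exactly $s_v=\deg_G(v)-\deg_{G_R}(v)$ of $e(R,S)$ unspent. It then moves the excluded vertex from $v$ to $w$ at the level of $|\deg_{G_R}(\cdot)-d|$, via the reverse triangle inequality
\[
|\deg_{G_R}(v)-d|-|\deg_{G_R}(w)-d|\le\deg_{G_R}(w)-\deg_{G_R}(v)\le\deg_G(v)-\deg_{G_R}(v).
\]
This treats $w=v$ and both signs of $\deg_{G_R}(v)-d$ uniformly. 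The equality claim then comes directly from tightness of the last inequality. By excluding $w$ from the start, you have to pay for the mismatch $\deg_G(v)-\deg_G(w)$, which is what forces your case split.
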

\begin{proof}
    We have 
    \begin{align*}
        &\sum_{s \in S} (\deg_G(s)-d) \geq e(R,S)+ d(|I|-|S|).
    \end{align*}
    So by definition of surplus, we get
    \begin{align*}
        \surp{d}{G}{v} &\geq \sum_{\substack{r \in R \setminus \{v\}}} (\deg_{G}(r)-d)+\sum_{s \in S} (\deg_G(s)-d)\\ 
        &\geq \sum_{\substack{r \in R \setminus \{v\} \\ \deg_{G_R}(r) > d}} (\deg_{G_R}(r)-d) + e(R,S) + d(|I|-|S|)\\
        &\geq \left(\sum_{r \in R \setminus \{v\}} |\deg_{G_R}(r)-d| \right) +\deg_G(v)-\deg_{G_{R}}(v) + d(|I|-|S|),
    \end{align*}
    where the third inequality holds because
    \[
        e(R,S) \geq \sum_{\substack{r \in R \setminus \{v\} \\ \deg_{G_R}(r) < d}} |\deg_{G_R}(r)-d| + \deg_G(v)-\deg_{G_{R}}(v).
    \]    
    From this and the fact that 
    \begin{align}\label{align:triangle_ineq_for_replacing_v_with_w}
        |\deg_{G_R}(v) - d| - |\deg_{G_R}(w) - d| \leq \deg_{G_R}(w) - \deg_{G_R}(v) \leq \deg_{G}(v) - \deg_{G_R}(v),
    \end{align}
    we indeed get 
    \begin{align*}
        \sum_{r \in R \setminus \{w\}} |\deg_{G_R}(r) - d| &\leq \left(\sum_{r \in R \setminus \{v\}} |\deg_{G_R}(r)-d| \right) +\deg_G(v)-\deg_{G_{R}}(v) \\
        &\leq \surp{d}{G}{v}-d(|I|-|S|). 
    \end{align*}
    Furthermore, for equality to hold in \Cref{align:triangle_ineq_for_replacing_v_with_w}, we in particular must have $\deg_{G_R}(w) = \deg_G(v)$.
\end{proof}

Let us define the potential of a degree sequence $(d_1,\dots,d_n)$ to be $\sum_{i=1}^{n}2^{-d_i-1}$.
In the proof of many of the subsequent lemmas, we exploit the fact that if the potential of a degree sequence is less than $1$ (respectively $\frac12$), the Dominator wins going first (respectively second) on all graphs with this degree sequence. By bounding the potential after one step into the game, that is, after the Dominator claims a maximum degree vertex, we can find even more such degree sequences for which the Dominator wins on all graphs corresponding to that degree sequence. 

\begin{lemma}[One-step Erd\H os-Selfridge for degree sequences]\label{lemma:one_step_deg_seq}
    Let $G$ be a graph with degree sequence $(d_1,\dots,d_n)$, and assume that 
    \[\sum_{i=1}^{n-d_n-1} 2^{-d_i-1} < \frac 12.\]
    Then $o(G) \geq \mathcal N$. 
\end{lemma}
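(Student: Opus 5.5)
The plan is to let the Dominator open by claiming a vertex $v$ of maximum degree $d_n$, and then to apply \Cref{theorem:ES_for_gs} to the resulting game state $(G,\{v\},\emptyset)$ with the Staller to move. If $\pot(G,\{v\},\emptyset) < \frac12$, then $o_{\mathcal S}(G,\{v\},\emptyset) = \mathcal D$, so the Dominator wins going first and $o(G) \geq \mathcal N$.

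\emph{Computing the potential after the first move.} For each $u \in N[v]$ we have $N[u] \cap \{v\} \neq \emptyset$, so $\gsdeg(u) = \infty$ and $u$ contributes $0$. For each $u \notin N[v]$ nothing is claimed in $N[u]$, so $\gsdeg(u) = \deg(u)+1$. Hence
\[
    \pot(G,\{v\},\emptyset) = \sum_{u \in V(G)\setminus N[v]} 2^{-\deg(u)-1}.
\]
The set $V(G)\setminus N[v]$ has exactly $n-d_n-1$ vertices.

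\emph{Bounding by the smallest degrees.} The function $x \mapsto 2^{-x-1}$ is decreasing. Any $n-d_n-1$ vertices of $G$ therefore contribute at most the sum over the $n-d_n-1$ vertices of smallest degree. In degree-sequence notation this is
\[
    \sum_{i=1}^{n-d_n-1} 2^{-d_i-1}.
\]
To make this rigorous, I would match the degrees of the vertices in $V(G)\setminus N[v]$, listed in increasing order, termwise against $d_1,\dots,d_{n-d_n-1}$. The $k$-th smallest degree in a subset is at least $d_k$, so each term on the subset side is at most the corresponding term $2^{-d_k-1}$.

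This gives $\pot(G,\{v\},\emptyset) < \frac12$ by assumption, and the theorem applies. There is no real obstacle here; the only point needing care is this termwise comparison justifying that the worst case is the $n-d_n-1$ smallest degrees.
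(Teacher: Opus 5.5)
Your proposal is correct and matches the paper's proof: the Dominator opens with a maximum-degree vertex $v$, the resulting potential $\sum_{u\in V(G)\setminus N[v]}2^{-\deg(u)-1}$ is bounded by the sum over the $n-d_n-1$ smallest degrees, and \Cref{theorem:ES_for_gs} with the Staller to move finishes the argument. Your termwise comparison just spells out the inequality that the paper states without further justification.
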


\begin{proof}
    Let $v$ be a vertex of maximum degree $d_n$. Then 
    \begin{align*}
        \pot(G, \{v\}, \{\}) = \sum_{u \in V(G) \setminus N[v]} 2^{-\deg(u)-1} \leq \sum_{i=1}^{n-d_n-1} 2^{-d_i-1} < \frac 12.
    \end{align*}
    By \Cref{theorem:ES_for_gs} we have $o(G, \{v\},\{\}) = \mathcal D$ and thus $o(G) \geq \mathcal N$. 
\end{proof} 

\subsection*{R-lemmas and SI-lemmas}

As outlined above, using an RSI-decomposition, we split a big graph $G$ on at most $21$ vertices into two smaller parts, $G_R$ and $G_{SI}$. For proving $o(G) \geq \mathcal N$, we then need that the Dominator wins on one of the parts going first and on one of them going second. These desired lower bounds for the outcome of the two parts are proven in the subsequent R-lemmas and SI-lemmas.

\begin{r-lemma}\label{r-lemma:R_4}
    Let $G_{R}$ be a graph on the vertex set $R$ with $|R| = 4$. If $\mindeg(G_R) \geq 1$ and $\sur(G_R) \leq 5$, then $o(G_R) = \mathcal D$. 
\end{r-lemma}

\begin{proof}
    There is exactly one graph on four vertices with no isolated vertex and no perfect $[1,2]$-factor, and this is the unique graph with degree sequence $(1,1,1,3)$. This graph has surplus $6$.
\end{proof}

\begin{r-lemma}\label{r-lemma:R_5_6}
    Let $G_{R}$ be a graph on the vertex set $R$ with $5 \leq |R| \leq 6$, and suppose that $\mindeg(G_R) \geq 1$ and $\sur(G_R) \leq 5$. Then either $o(G_R) = \mathcal D$ or there exists $u \in V(G_R)$ with $\deg_{G_R}(u) = 1$ and $o(G-u) = \mathcal D$.
\end{r-lemma}

\begin{proof}
    In the section on R-Lemmas in \texttt{generate\_and\_check.sh}, we check whether the Dominator, as the second player, wins on every graph on $5$ or $6$ vertices with minimum degree $\geq 1$ and surplus at most $5$. In total \pgfmathprintnumber{145} graphs are checked, and we see that there are $6$ exceptions on which the Staller wins going first. They are listed in \Cref{figure:exceptional_R_graphs}, and it is easy to check that removing any degree $1$ vertex of any one of them results in a graph on which the Dominator wins going second.
\end{proof}

\begin{figure}[ht]
    \centering
    \begin{tikzpicture}[every node/.style={vertex}]
        \begin{scope}[xshift=0cm, yshift=0cm]
            \node (v1) at (-1,-1) {};
            \node (v2) at ( 1,-1) {};
            \node (v3) at ( 0, 1) {};
            \node (v4) at ( 0,-2) {};
            \node (v5) at ( 0, 0) {};
            
            \draw[edge] (v1)--(v4);
            \draw[edge] (v1)--(v5);
            \draw[edge] (v2)--(v4);
            \draw[edge] (v2)--(v5);
            \draw[edge] (v3)--(v5);
        \end{scope}
        
        \begin{scope}[xshift=5cm, yshift=0cm]
            \node (v1) at ( 0,-1.7) {};
            \node (v2) at ( 2, 1.0) {};
            \node (v3) at (-2, 1.0) {};
            \node (v4) at ( 1, 0.0) {};
            \node (v5) at (-1, 0.0) {};
            
            \draw[edge] (v1)--(v4);
            \draw[edge] (v1)--(v5);
            \draw[edge] (v2)--(v4);
            \draw[edge] (v3)--(v5);
            \draw[edge] (v4)--(v5);
        \end{scope}
        
        \begin{scope}[xshift=10cm, yshift=0cm]
            \node (v1) at (-1,-1) {};
            \node (v2) at ( 1,-1) {};
            \node (v3) at ( 0, 1) {};
            \node (v4) at ( 0,-2) {};
            \node (v5) at ( 0, 0) {};
            
            \draw[edge] (v1)--(v4);
            \draw[edge] (v1)--(v5);
            \draw[edge] (v2)--(v4);
            \draw[edge] (v2)--(v5);
            \draw[edge] (v3)--(v5);
            \draw[edge] (v4)--(v5);
        \end{scope}
        
        \begin{scope}[xshift=0cm, yshift=-4cm]
            \node (v1) at (-1,-1) {};
            \node (v2) at ( 1,-1) {};
            \node (v3) at ( 0,-1) {};
            \node (v4) at ( 0, 1) {};
            \node (v5) at ( 0,-2) {};
            \node (v6) at ( 0, 0) {};
            
            \draw[edge] (v1)--(v5);
            \draw[edge] (v1)--(v6);
            \draw[edge] (v2)--(v5);
            \draw[edge] (v2)--(v6);
            \draw[edge] (v3)--(v5);
            \draw[edge] (v3)--(v6);
            \draw[edge] (v4)--(v6);
        \end{scope}
        
        \begin{scope}[xshift=5cm, yshift=-4cm]
            \node (v1) at (-1,-1.7) {};
            \node (v2) at (-1, 1.0) {};
            \node (v3) at ( 1, 1.0) {};
            \node (v4) at ( 1,-1.7) {};
            \node (v5) at ( 0,-1.0) {};
            \node (v6) at ( 0, 0.0) {};
            
            \draw[edge] (v1)--(v4);
            \draw[edge] (v1)--(v5);
            \draw[edge] (v1)--(v6);
            \draw[edge] (v2)--(v6);
            \draw[edge] (v3)--(v6);
            \draw[edge] (v4)--(v5);
            \draw[edge] (v4)--(v6);
        \end{scope}
        
        \begin{scope}[xshift=10cm, yshift=-4cm]
            \node (v1) at (-1,-1.7) {};
            \node (v2) at (-1, 1.0) {};
            \node (v3) at ( 1, 1.0) {};
            \node (v4) at ( 1,-1.7) {};
            \node (v5) at ( 0,-1.0) {};
            \node (v6) at ( 0, 0.0) {};
            
            \draw[edge] (v1)--(v4);
            \draw[edge] (v1)--(v5);
            \draw[edge] (v1)--(v6);
            \draw[edge] (v2)--(v6);
            \draw[edge] (v3)--(v6);
            \draw[edge] (v4)--(v5);
            \draw[edge] (v4)--(v6);
            \draw[edge] (v5)--(v6);
        \end{scope}
    \end{tikzpicture}
    \caption{The six graphs on $5$ or $6$ vertices with no isolated vertex, surplus at most $5$, and outcome at most $\mathcal N$. Their graph6 strings are \gsix{DEw}, \gsix{DEk} and \gsix{DE\{} (top row, left to right)
    and \gsix{E?zo}, \gsix{ECfo} and \gsix{ECfw} (bottom row, left to right). Here, graph6 refers to the string format used to store graphs in \nauty.}
    \label{figure:exceptional_R_graphs}
\end{figure}

\begin{r-lemma}\label{r-lemma:R_7_i=s+2}
    Let $G_{R}$ be a graph on the vertex set $R$ with $4 \leq |R| \leq 7$. If $\mindeg(G_R) \geq 1$ and $\sur(G_R) \leq 2$, then $o(G_R) = \mathcal D$. 
\end{r-lemma}
\begin{proof}
    This is checked in \texttt{generate\_and\_check.sh}. In total \pgfmathprintnumber{1040} graphs are checked and are shown to be won by the Dominator.
\end{proof}

\begin{r-lemma}\label{r-lemma:R_12}
    Let $G_{R}$ be a graph on the vertex set $R$ with $|R| \leq 12$. If $\mindeg(G_R) \geq 1$ and $\sur(G_R) \leq 5$, then $o(G_R) \geq \mathcal N$. 
\end{r-lemma}
\begin{proof}
    We only need to consider graphs with degree sequences $(d_1,\dots,d_n)$ with $d_i\leq d_{i+1}$ for which
    \begin{itemize}
        \item $n \leq 12$ and $d_1 \geq 1$,
        \item the potential of $(d_1,\dots,d_n)$ is at least $1$,
        \item the quantity $\sum_{i=1}^{n-d_n-1} 2^{-d_i-1}$ in \Cref{lemma:one_step_deg_seq} is at least $\frac 12$, 
        \item the surplus $\sum_{i = 1}^{n-1} |d_i-3|$ is at most $5$ and 
        \item the sum of the entries $\sum_{i=1}^n d_i$ is even so that $(d_1,\dots,d_n)$ has a chance of being a degree sequence of a graph by the handshaking lemma.
    \end{itemize}
    By going computationally or by hand through all degree sequences, we obtain the relevant degree sequences listed in \Cref{table:R_12}.
    Note that due to the condition on the surplus, there are at most 6 vertices with degree $\neq3$.
    
    In \Cref{table:R_12} we have $x_1\in\{4,6\}$, $x_2\in\{4,6,8\}$, $y_1\in\{3,5,7\}$ and $y_2\in\{3,5,7,9\}$.
    
    In the section on R-lemmas in \texttt{generate\_and\_check.sh}, we check all graphs having one of these degree sequences.
    Due to the way \nauty\ or in particular \geng\ and \pickg\ work, we actually generate all graph with respective number of vertices and minimum and maximum degree.
    In total, we check \pgfmathprintnumber{5026665} graphs which needs around 5 minutes.
\end{proof}

\begin{r-lemma}\label{r-lemma:R_13_i=s+2}
    Let $G_{R}$ be a graph on the vertex set $R$ with $|R| \leq 13$. If $\mindeg(G_R) \geq 1$ and $\sur(G_R) \leq 2$, then $o(G_R) \geq \mathcal N$.
\end{r-lemma}
\begin{proof}
    Because of the Erd\H{o}s-Selfridge Theorem, a graph contradicting this lemma would need to have the degree sequence $(1,\underbrace{3,\dots,3}_{12})$. There are no graphs with this degree sequence because of the handshaking lemma.
\end{proof}

\begin{table}[!ht]
    \centering
    \rowcolors{2}{white}{black!5!white}
    \begin{tabular}{c  c  c  c}
        \toprule
        length & all possible degree sequences & \makecell{maximum\\degree} & \makecell{number\\of edges}\\
        \midrule
        $9$ & $(1,1,2,3,\dots,3)$ & $3$ & $11$ \\ 
        $10$ & $(1,1,2,3,\dots,3,x_1), (1,1,3,\dots,3)$ & $6$ & $13 - 14$ \\
        $11$ & \makecell{$(1,1,2,3,\dots,3,y_1),(1,2,2,2,3,\dots,3,y_1)$\\
        $(2,2,2,2,2,3,\dots,3),(1,1,3,\dots,3,x_2)$} & $8$ & $14-17$ \\ 
        $12$ & \makecell{$(1,1,2,3,\dots,3,x_2), (1,2,2,2,3,\dots,3,x_2),$ \\
        $(2,2,2,2,2,3,\dots,3,x_1),(1,1,3,\dots,3,y_2),$\\
        $(1,1,3,\dots,3,4,x_2),(1,2,2,3,\dots,3,y_2),$\\
        $(1,2,2,3,\dots,3,4,4),(2,2,2,2,3,\dots,3)$} & $9$ & $16-19$ \\
        \bottomrule
    \end{tabular}
    \caption{Degree sequences to consider for \Cref{r-lemma:R_12}}\label{table:R_12}
\end{table}

\begin{r-lemma}\label{r-lemma:R_14_no_deg_1}
    Let $G_{R}$ be a graph on the vertex set $R$ with $|R| \leq 14$. If $\mindeg(G_R) \geq 2$ and $\sur(G_R) \leq 5$, then $o(G_R) \geq \mathcal N$.
\end{r-lemma}
\begin{proof}
    Again, we only have to consider degree sequences $(d_1,\dots,d_n)$ with $d_i\leq d_{i+1}$ and
    \begin{itemize}
        \item $n \leq 14$ and $d_1 \geq 2$,
        \item the potential of $(d_1,\dots,d_n)$ is at least $1$,
        \item the quantity $\sum_{i=1}^{n-d_n-1} 2^{-d_i-1}$ in \Cref{lemma:one_step_deg_seq} is at least $\frac 12$, 
        \item the surplus $\sum_{i = 1}^{n-1} |d_i-3|$ is at most $5$ and 
        \item the sum of entries $\sum_{i=1}^n d_i$ is even.
    \end{itemize}
    Graphs with $|R|\leq12$ are already covered in \Cref{r-lemma:R_4}, hence we can limit to graphs with 13 and 14 vertices here.
    For these we find the degree sequences from \Cref{table:R_14_no_deg_1} to be checked.
    \begin{table}[!ht]
        \centering
        \rowcolors{2}{white}{black!5!white}
        \begin{tabular}{c c c c} 
            \toprule
            length & all possible degree sequences & \makecell{maximum\\degree} & \makecell{number\\of edges}\\
            \midrule
            $13$ & \makecell{$(2,2,2,2,2,3,\dots,3,y_1), (2,2,2,2,3,\dots,3,x),$\\
            $(2,2,2,3,\dots,3)$} & $8$ & $17-19$ \\ 
            $14$ & \makecell{$(2,2,2,2,2,3,\dots,3,x), (2,2,2,2,3,\dots,3,y_2),$\\
            $(2,2,2,2,3,\dots,3,4,x), (2,2,2,3,\dots,3,x)$\\
            $(2,2,3,\dots,3)$} & $9$ & $19-22$ \\
            \bottomrule 
        \end{tabular}
        \caption{Degree sequences to consider for \Cref{r-lemma:R_14_no_deg_1}}\label{table:R_14_no_deg_1}
    \end{table}
    Here $x\in\{4,6,8\}$, $y_1\in\{3,5,7\}$ and $y_2\in\{3,5,7,9\}$. In \texttt{generate\_and\_check.sh}, all graphs with respective parameters, in total \pgfmathprintnumber{5695198}, are checked, which takes around 105 minutes.
\end{proof}

\begin{si-lemma}\label{si-lemma:K34_special_strat}
    Let $G_{SI}=K_{3,4}$, and let $u_1$ and $u_2$ be two distinct degree $4$ vertices of $G_{SI}$. As the second player, the Dominator has a winning strategy that simultaneously ensures he claims either $u_1$ or $u_2$.
\end{si-lemma}
\begin{proof}
    Note that a set of vertices of $G_{SI}$ containing a vertex of degree $3$ and a vertex of degree $4$ already forms a dominating set. As the second player, the Dominator can guarantee to claim either $u_1$ or $u_2$ in his first move and to claim a degree $3$ vertex in his second move. This proves the lemma.
\end{proof}

\begin{si-lemma}\label{si-lemma:K34_with_extra_vtx}
    Let $G_{SI} \in \{G_{SI}^{(1)}, G_{SI}^{(2)}, G_{SI}^{(1)} - w^{(1)}, G_{SI}^{(2)} - w^{(2)}\}$, where the graphs $G_{SI}^{(1)}$ and $G_{SI}^{(2)}$ and their vertices $w^{(1)}$ and $w^{(2)}$ are depicted in \Cref{figure:K34_with_extra_vtx}. Then $o(G_{SI}) = \mathcal D$. 
\end{si-lemma}

\begin{figure}[ht]
    \centering
    \begin{tikzpicture}
        \begin{scope}
            \foreach \i in {1,2,3} {
                \node[vertex] (L\i) at (0,\i+0.5) {};
            }
            \foreach \j in {1,2,3,4} {
                \node[vertex] (R\j) at (3,\j) {};
            }
            \foreach \i in {1,2,3} {
                \foreach \j in {1,2,3,4} {
                    \draw[edge] (L\i) -- (R\j);
                }
            }
            
            \node[vertex] (u) at (-2, 1.5) {};  
            \node[vertex, label={west:\footnotesize $w^{(1)}$}] (w) at (-2, 2.5) {};
            \node[vertex] (u') at (-2, 3.5) {};
            
            \draw[edge] (u) -- (L1);
            \draw[edge] (u) -- (L2);
            \draw[edge] (u') -- (L2);
            \draw[edge] (u') -- (L3);
            \draw[edge] (u) -- (w);
            \draw[edge] (u') -- (w);
    
            \node at (0.7,0.5) {$G_{SI}^{(1)}$};
        \end{scope}
    
        \begin{scope}[xshift=7cm]
            \foreach \i in {1,2,3} {
                \node[vertex] (L\i) at (0,\i+0.5) {};
            }
            \foreach \j in {1,2,3,4} {
                \node[vertex] (R\j) at (3,\j) {};
            }
            \foreach \i in {1,2,3} {
                \foreach \j in {1,2,3,4} {
                    \draw[edge] (L\i) -- (R\j);
                }
            }
            
            \node[vertex] (u) at (-2, 1.5) {};  
            \node[vertex, label={west:\footnotesize $w^{(2)}$}] (w) at (-2, 2.5) {};
            \node[vertex] (u') at (-2, 3.5) {};
            
            \draw[edge] (u) -- (L3);
            \draw[edge] (u) -- (L2);
            \draw[edge] (u') -- (L2);
            \draw[edge] (u') -- (L3);
            \draw[edge] (u) -- (w);
            \draw[edge] (u') -- (w);
            
            \node at (0.7,0.5) {$G_{SI}^{(2)}$};
        \end{scope}
    \end{tikzpicture}
    \caption{The graphs $G_{SI}^{(1)}$ and $G_{SI}^{(2)}$}
    \label{figure:K34_with_extra_vtx}
\end{figure}

\begin{proof}
    The graphs $G_{SI}^{(1)}$ and $G_{SI}^{(2)}$ have spanning subgraphs having two components isomorphic to $K_2$ and one component isomorphic to $K_{2,4}$. Since $o(K_2) = o(K_{2,4}) = \mathcal D$, we get $o(G_{SI}^{(1)}) = o(G_{SI}^{(2)}) = \mathcal D$. For $G_{SI}^{(2)} - w^{(2)}$, we just apply \Cref{si-lemma:K34_special_strat} to the copy of $K_{3,4}$ contained in $G_{SI}^{(2)} - w^{(2)}$ and to $u_1$ and $u_2$ being the two degree $6$ vertices of $G_{SI}^{(2)} - w^{(2)}$. Finally, $G_{SI}^{(1)} - w^{(1)}$ has a spanning subgraph with one $K_2$ and one $K_{2,5}$ as its components. Since $o(K_2) = o(K_{2,5}) = \mathcal D$, we get $o(G_{SI}^{(1)} - w^{(1)}) = \mathcal D$.
\end{proof}

\begin{si-lemma}\label{si-lemma:S6I7}
    Let $G_{SI}=(S,I,E)$ be a bipartite graph with $|I| = |S|+1\leq 7$, $\deg(i)=3$ for all $i\in I$, and $\deg(s) \geq 2$ for all $s\in S$. Then $o(G_{SI}) = \mathcal D$. 
\end{si-lemma}
\begin{proof}
    This is checked in the section on SI-lemmas in \texttt{generate\_and\_check.sh}.
    In total \pgfmathprintnumber{711} graphs are checked to be won by the Dominator.
\end{proof}

\begin{si-lemma}\label{si-lemma:S7I9}
    Let $G_{SI}=(S,I,E)$ be a bipartite graph with $|I| = |S|+2\leq 9$, $\deg(s) \geq 2$ for all $s\in S$, and at most three vertices of degree $2$ in $S$. Suppose furthermore that there is exactly one vertex in $I$ with degree $2$, and for all other $i \in I$, $\deg(i)=3$. Then $o(G_{SI})\geq \mathcal N$. 
\end{si-lemma}
\begin{proof}
    This is checked in \texttt{generate\_and\_check.sh}.
    In total \pgfmathprintnumber{293287} graphs are checked to be won by the Dominator as starting player, which takes around a minute.
\end{proof}

\begin{si-lemma}\label{si-lemma:S8I9}
    Let $G_{SI}=(S,I,E)$ be a bipartite graph with $|I| = |S|+ 1 \leq 9$, $\deg(i)=3$ for all $i\in I$, and $\deg(s) \geq 2$ for all $s\in S$. Then $o(G_{SI})\geq  \mathcal N$. 
\end{si-lemma}
\begin{proof}
    By \Cref{si-lemma:S6I7}, we only have to consider the cases $8 \leq |I| = |S|+1 \leq 9$. All the remaining graphs are checked in \texttt{generate\_and\_check.sh}.
    In total \pgfmathprintnumber{274425} graphs are checked to be won by the Dominator as starting player, which takes around three minutes.
\end{proof}

\begin{si-lemma}\label{si-lemma:S9I10}
    Let $G_{SI}=(S,I,E)$ be a bipartite graph with $|I| = |S|+ 1 \leq 10$, $\deg(i)=3$ for all $i\in I$ and $2 \leq \deg(s) \leq 4$ for all $s\in S$. Then $o(G_{SI})\geq  \mathcal N$. 
\end{si-lemma}
\begin{proof}
    By \Cref{si-lemma:S8I9}, we only have to consider the case $|I| = |S|+ 1 = 10$.
    In this case we have $\sum_{s\in S} \deg(s) = \sum_{i\in I} \deg(i) = 30$. If there were four vertices of degree 2,  then even if all other vertices had degree 4, we would get $\sum_{s\in S} \deg(s) = 4\cdot 2 + 5\cdot 4 < 30$. Hence, there are at most three vertices of degree 2.
    We check all such graphs in \texttt{generate\_and\_check.sh}.
    In total \pgfmathprintnumber{465023} graphs are checked to be won by the Dominator as starting player, which takes around 46 minutes.
\end{proof}

\begin{si-lemma}\label{si-lemma:S5I7_i=s+2}
    Let $G_{SI}=(S,I,E)$ be a bipartite graph with $|I| = |S|+ 2 \leq 7$, $\deg(i)=3$ for all $i\in I$ and  $\deg(s) \geq 2$ for all $s\in S$ Then $o(G_{SI})= \mathcal D$. 
\end{si-lemma}
\begin{proof}
    Note that $|S| \geq 3$, since the vertices in $I$ have degree $3$. In this case we have $\sum_{s\in S} \deg(s) = \sum_{i\in I} \deg(i) = 3\cdot|I|$. If there were three vertices of degree 2, then even if all other vertices had degree 7, we would get $\sum_{s\in S} \deg(s) = 3\cdot 2 + (|S|-3)\cdot 7 =  6+ (|I|-5)\cdot 7 < 3\cdot|I|$ for $|I|\leq 7$. Hence, there are at most two vertices of degree 2. We check all such graphs with $5 \leq |I| = |S|+ 2 \leq 7$ in \texttt{generate\_and\_check.sh}.
    In total \pgfmathprintnumber{117} graphs are checked to be won by the Dominator.
\end{proof}

\begin{si-lemma}\label{si-lemma:S7I9_i=s+2}
    Let $G_{SI}=(S,I,E)$ be a bipartite graph with $|I| = |S|+ 2 \leq 9$, $\deg(i)=3$ for all $i\in I$, $\deg(s) \geq 2$ for all $s\in S$, and with at most two vertices of degree $2$. Then $o(G_{SI})\geq \mathcal N$. 
\end{si-lemma}
\begin{proof}
    Because of the previous \Cref{si-lemma:S5I7_i=s+2}, we can assume that $8 \leq |I| = |S|+ 2 \leq 9$. We again check all such graphs in \texttt{generate\_and\_check.sh}.
    In total \pgfmathprintnumber{74183} graphs are checked to be won by the Dominator as starting player.
\end{proof}

\subsection*{RSI-decomposition and Proof of \Cref{theorem:beta_3_for_sur_8}}
It is convenient to deal with special cases of the RSI-decomposition that we get from \Cref{corollary:Staller_wins_then_proper_RSI-decomp}. Let us first deal with the case $|I| \geq |S|+2$, such that later we can assume $|I| = |S|+1$.

\begin{lemma}\label{lemma:beta_3_disconnected}
    Let $G$ be a graph on at most $21$ vertices and $\mindeg(G) = 3$, and assume that $G$ is not connected. Then $o(G) \geq \mathcal N$.
\end{lemma}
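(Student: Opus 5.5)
We argue by the outcome table for disjoint unions (\Cref{table:outcome_of_disjoint_union}). Split $G$ into components $C_1,\dots,C_k$ with $k\geq 2$. Each component has minimum degree at least $3$, so it has at least $4$ vertices. By the table, $o(G)\geq\mathcal N$ holds exactly when at most one component has outcome $\mathcal N$ and the rest have outcome $\mathcal D$, with no component having outcome $\mathcal S$. So it suffices to show that every component except possibly the largest satisfies $o(C_j)=\mathcal D$, and that the largest satisfies $o(C_j)\geq\mathcal N$.

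\textbf{Largest component.} Every component has at most $21-4=17$ vertices and minimum degree $3$. By \Cref{proposition:tiny_betas}, $\beta(3)\geq$ ... is not yet known, but we only need $17<2^{4}+2=18$. Then \Cref{thm:plusone} with $d=3$ gives $\beta(3)\geq 18$, so every graph with minimum degree exactly $3$ on at most $17$ vertices satisfies $o\geq\mathcal N$. This handles the largest component, and in fact every component.

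\textbf{Smaller components.} Let $C$ be a component other than the largest, so $|C|\leq 10$. We must show $o(C)=\mathcal D$, i.e.\ that the Dominator wins going second. Minimum degree $3$ on $n\leq 10$ vertices gives $\beta'(3)=2^{3}+4+1=13>10$ by \Cref{proposition:tiny_betas}. Since $\beta'(3)$ is the minimum size of a graph with minimum degree $3$ and $o\leq\mathcal N$, every such $C$ has $o(C)=\mathcal D$. One technical point: the definition of $\beta'$ uses minimum degree exactly $3$, and $C$ may have minimum degree larger than $3$. To cover this case, use monotonicity under edge deletion: iteratively delete edges between vertices of degree greater than $3$, as in \Cref{lemma:reduction_to_reduced_graphs}, until the minimum degree drops to exactly $3$. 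This is possible unless all vertices already have degree at least $4$. In that case either argue directly via Erd\H{o}s--Selfridge, since $\pot<\tfrac12$ when $n\cdot 2^{-5}<\tfrac12$, i.e.\ $n\leq 15$, or first delete edges down to degree $3$. A cleaner version: remove edges of $C$ while keeping minimum degree at least $3$ until the graph is edge-minimal. Such a graph has some vertex of degree exactly $3$, since otherwise an edge between two vertices of degree at least $4$ could be deleted. Deleting edges only lowers the outcome, so the bound for the minimal graph transfers to $C$.

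\textbf{Combining.} The largest component has $o\geq\mathcal N$, and all others have $o=\mathcal D$. The table then gives $o(G)\geq\mathcal N$. The main obstacle is making sure the size bounds fit. The largest component needs $|C|\leq 17<18$. A non-largest component needs $|C|\leq\lfloor 21/2\rfloor=10<13$. Both hold, so the lemma follows without further computation.
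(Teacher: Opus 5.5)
Your proof is correct and is essentially the paper's argument. The paper likewise splits $V(G)$ into two parts $V_1,V_2$ with no edges between them, $4\le |V_1|\le 10$ and $|V_2|\le 17$, and combines $\beta'(3)=13$ (\Cref{proposition:tiny_betas}) and $\beta(3)\geq 18$ (\Cref{thm:plusone}) via \Cref{table:outcome_of_disjoint_union}; working with individual components instead of two unions of components amounts to the same thing. Your edge-minimal spanning subgraph trick handles the fact that $\beta$ and $\beta'$ are defined with minimum degree exactly $3$, which the paper passes over silently, but you should state that it is needed for the largest component as well, not only for the smaller ones.
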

\begin{proof}
    Let $V_1$ and $V_2$ be a partition of $V(G)$ such that $0 < |V_1| \leq |V_2|$ and such that $e(V_1,V_2) = 0$. Since $\mindeg(G) = 3$, we must have $|V_1|\geq 4$ and hence $|V_1| \leq 10$ and $|V_2| \leq 17$. Since $\beta'(3) = 13$ by \Cref{proposition:tiny_betas} and $\beta(3) \geq 18$ by \Cref{thm:plusone}, the Dominator thus wins on $G[V_1]$ going second and on $G[V_2]$ going first. Altogether $o(G) \geq \mathcal N$.
\end{proof}

\begin{proposition}\label{proposition:i=s+2}
    Let $G$ be a graph on at most $21$ vertices with $\mindeg(G) = 3$ and $\sur(G) \leq 8$, let $v$ be a maximum degree vertex of $G$, and assume there is a proper RSI-decomposition $(R,S,I)$ of $G$ separating $v$ such that $|I| \geq |S|+2$. Then $o(G) \geq \mathcal N$.  
\end{proposition}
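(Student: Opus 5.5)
Assume $|I|\ge|S|+2$ and use \Cref{lemma:surplus_in_RSI} with $d=3$. The term $3(|I|-|S|)$ is at least $6$, so the remaining nonnegative terms sum to at most $2$:
\[
\sum_{r\in R\setminus\{v\}}(\deg(r)-3)+e(R,S)+2e(S)+2\sum_{i\in I}(\deg(i)-3)\le 2 .
\]
Hence $|I|=|S|+2$ (since $|I|\ge|S|+3$ would force surplus at least $9$), every $i\in I$ has degree exactly $3$ (all its neighbours lie in $S$), and $e(S)\le1$, $e(R,S)\le2$. Moreover the $R$-excess $\sum_{r\ne v}(\deg(r)-3)$ plus $e(R,S)$ is at most $2$. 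The plan is to remove the at most two edges of $E(R,S)$ and the possible edge in $E(S)$, and to play separately on $G_R=G[R]$ and on the bipartite graph $G_{SI}=G[S\cup I]-E(S)$. Since the Dominator claiming a dominating set in each part yields one of $G$, \Cref{table:outcome_of_disjoint_union} shows it suffices to have one part with outcome $\mathcal D$ and the other with outcome $\ge\mathcal N$.

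For the $SI$-part, we have $\deg(i)=3$ for all $i\in I$, and properness gives every $s\in S$ at least two neighbours in $I$, so $\deg_{G_{SI}}(s)\ge2$. Vertices of $S$ with $\deg_{G_{SI}}(s)=2$ have $G$-degree at least $3$, so each one uses an edge to $R$ or the edge in $S$. Counting this against the budget of $2$ shows there are at most two such vertices. Also $|S|\ge3$ and $|I|\ge5$, since $I$-vertices have three neighbours in $S$, and $|R|\ge4$ since $\mindeg(G)=3$. As $|R|+2|S|+2\le21$, either $|I|\le7$, in which case \Cref{si-lemma:S5I7_i=s+2} gives $o(G_{SI})=\mathcal D$, or $|I|\in\{8,9\}$, in which case \Cref{si-lemma:S7I9_i=s+2} gives $o(G_{SI})\ge\mathcal N$. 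The latter case forces $|S|\ge6$ and hence $|R|\le 21-2|S|-2\le7$.

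For the $R$-part, vertices of $R$ lose at most two degree units in total, so $\mindeg(G_R)\ge1$. Choosing $w$ as a maximum degree vertex of $G_R$, \Cref{lemma:replace_v_by_w_in_R} gives $\sur(G_R)\le 8-6=2$. If $|I|\ge8$, then $|R|\le7$ and \Cref{r-lemma:R_7_i=s+2} gives $o(G_R)=\mathcal D$, which pairs with $o(G_{SI})\ge\mathcal N$. If $|I|\le7$, then $|S|\ge3$ gives $|R|\le 21-8=13$, and \Cref{r-lemma:R_13_i=s+2} gives $o(G_R)\ge\mathcal N$, which pairs with $o(G_{SI})=\mathcal D$. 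In both cases $o(G)\ge o(G_R\cup G_{SI})\ge\mathcal N$.

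The main obstacle is the bookkeeping needed to meet each lemma's hypotheses exactly. One must check that the surplus of $G_R$ with respect to its own maximum degree vertex stays at most $2$; this is handled by \Cref{lemma:replace_v_by_w_in_R}, but only if $w$ is taken to be a maximum degree vertex of $G_R$. One must also check that the count of degree-$2$ vertices of $S$ in $G_{SI}$ is at most two even when the edge inside $S$ contributes to two vertices at once. Finally, the vertex-count ranges ($|R|\le7$ versus $|R|\le13$) must match the two branches. If some edge case slips through, for example $|R|$ forced small but $\mindeg(G_R)=0$, I would fall back to \Cref{lemma:beta_3_disconnected} or the direct estimate of \Cref{lemma:one_step_deg_seq}.
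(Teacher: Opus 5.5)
Your strategy is the paper's. You split off $G_R=G[R]$ and a bipartite $G_{SI}$, then pair \Cref{r-lemma:R_7_i=s+2} with \Cref{si-lemma:S7I9_i=s+2}, or \Cref{r-lemma:R_13_i=s+2} with \Cref{si-lemma:S5I7_i=s+2}. Your split on $|I|\ge 8$ versus $|I|\le 7$ is equivalent to the paper's split on $|R|\le 7$ versus $|R|\ge 8$. The bookkeeping for $\sur(G_R)\le 2$, for $\mindeg(G_R)\ge 1$, and for the number of degree-$2$ vertices of $S$ in $G_{SI}$ is correct. Deleting a possible edge of $E(S)$, instead of arguing it away, is a harmless variation.

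There is one unjustified step. The claim that every $i\in I$ has degree exactly $3$ does not follow from the surplus inequality. The budget of $2$ allows $2\sum_{i\in I}(\deg(i)-3)=2$, that is, exactly one $i_1\in I$ with $\deg(i_1)=4$. In that case neither SI-lemma you invoke applies, since both require $\deg(i)=3$ for all $i\in I$.

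The fix is easy. If such an $i_1$ exists, every other term in \Cref{lemma:surplus_in_RSI} must vanish, so $e(R,S)=0$. Then $G$ is disconnected, because $I$ has no neighbours outside $S$. \Cref{lemma:beta_3_disconnected} then gives $o(G)\ge\mathcal N$; this is how the paper handles it, and it treats $e(S)=1$ the same way. Alternatively, since then $e(R,S)=e(S)=0$, every $s\in S$ has degree at least $3$ in $G_{SI}$. You could delete one edge at $i_1$ and apply the SI-lemmas to the resulting spanning subgraph.

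Your closing fallback mentions \Cref{lemma:beta_3_disconnected} only vaguely, and for a different scenario, so as written this case is not covered.

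A smaller point: $|R|\ge 4$ does not follow from $\mindeg(G)=3$ alone. It needs $e(R,S)\le 2$ as well, since $|R|\le 3$ would force $e(R,S)\ge 3$.
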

\begin{proof}
    By \Cref{lemma:surplus_in_RSI}, we have $|I| = |S|+2$ and can assume that $\deg(i) = 3$ for all $i \in I$ and that $e(S) = 0$, as otherwise $G$ would not be connected, in which case the Dominator wins by \Cref{lemma:beta_3_disconnected}.
    
    Let $G_R = G[R]$ and let $G_{SI} = G[S \cup I]$. We collect some properties of $G_R$ and $G_{SI}$ to be able to apply the lemmas from the previous section. First, $|R| \geq 4$, as otherwise $e(R,S) \geq 3$, contradicting \Cref{lemma:surplus_in_RSI}. There is at least one vertex in $I$, which has to have at least three neighbours in $S$, so that $|I|= |S|+2 \geq 5$. Altogether, we get that $4 \leq |R| \leq 13$ and $5 \leq |I|= |S|+2 \leq 9$. Since $(R,S,I)$ is proper, all the vertices $s \in S$ have degree at least $2$ in $G_{SI}$, and by \Cref{lemma:surplus_in_RSI} there are at most two vertices of degree $2$ in $S$. By definition of RSI-decomposition, we also have $\deg_{G_{SI}}(i) = \deg_G(i) = 3$ for all $i\in I$. For $G_R$, note that \Cref{lemma:replace_v_by_w_in_R} implies $\sur(G_R) \le 2$. By \Cref{lemma:surplus_in_RSI}, we have $e(R,S) \leq 2$ so that $\deg_{G_R}(r) \geq 1$ for all $r \in R$.
    
    Depending on the size of $R$, we now distinguish two cases. 

    \textbf{Case 1. } Suppose that $4 \leq |R| \leq 7$. By the above, $G_R$ satisfies all the conditions in \Cref{r-lemma:R_7_i=s+2}, so that $o(G_R) = \mathcal D$. Likewise, $G_{SI}$ fulfils all the conditions on \Cref{si-lemma:S7I9_i=s+2} so that $o(G_{SI}) \geq \mathcal N$ and thus $o(G) \geq \mathcal N$.

    \textbf{Case 2. } Suppose that $8 \leq |R| \leq 13$. Again, $G_R$ satisfies the conditions for \Cref{r-lemma:R_13_i=s+2}, giving $o(G_R) \geq N$. For this range of $|R|$, we have that $|I| = |S|+2 \leq 7$, so that by \Cref{si-lemma:S5I7_i=s+2} $o(G_{SI}) = \mathcal D$. Again $o(G) \geq \mathcal N$. 
\end{proof}

Another convenient assumption for later is that $G_R = G[R]$ has no isolated vertices. For vertices $r\in R\setminus\{v\}$, this follows from the definition of RSI-decomposition. We now deal with the case that all the neighbours of $v$ are in $S$.

\begin{proposition}\label{proposition:mindeg_0_in_GR}
    Let $G$ be a reduced graph on at most $21$ vertices with $\mindeg(G) = 3$ and $\sur(G) \leq 8$, and assume that for every maximum degree vertex $v$ of $G$ there is an RSI-decomposition $(R,S,I)$ of $G$ separating $v$ with $N(v) \subseteq S$. Then $o(G) \geq \mathcal N$.
\end{proposition}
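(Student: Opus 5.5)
The plan is to use the fact that $v$ is isolated in $G[R]$ to move $v$ over to the ``$I$-side'', and then split $G$ into two almost independent pieces, as in \Cref{proposition:i=s+2}.

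\textbf{Reductions.} If $G$ is regular, then \Cref{corollary:Dominator_wins_on_regular_graphs} gives $o(G)=\mathcal D$. If $G$ is disconnected, \Cref{lemma:beta_3_disconnected} applies. So we may assume $G$ is connected and $\Delta:=\deg(v)\geq 4$. By \Cref{lemma:existence_of_proper_RSI} we would like $(R,S,I)$ to be proper. The minimality argument there moves only a vertex $s$ and its at most one $I$-neighbour into $R$, so it does not obviously preserve $N(v)\subseteq S$. We therefore redo that argument: among all RSI-decompositions separating $v$ with $N(v)\subseteq S$, choose one with $|S|$ minimal, and check that each $s\in N(v)$ still has two neighbours in $I$. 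If this check fails, we instead work with the given decomposition and use only the counting below, which does not need properness.

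\textbf{Counting.} Since $N(v)\subseteq S$, we have $e(R,S)\geq \Delta + e(R\setminus\{v\},S)$. \Cref{lemma:surplus_in_RSI} together with $|I|\geq |S|+1$ then gives
\[
8\geq \sur(G)\geq \Delta + e(R\setminus\{v\},S) + 3 + 2e(S) + \sum_{r\in R\setminus\{v\}}(\deg(r)-3) + 2\sum_{i\in I}(\deg(i)-3).
\]
Hence $\Delta\in\{4,5\}$, $|I|=|S|+1$, $e(R\setminus\{v\},S)\leq 1$ and $e(S)=0$. Moreover every $i\in I$ has degree $3$, except possibly one vertex of degree $4$ when $\Delta=4$. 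Because $G$ is reduced, every neighbour of $v$ has degree $3$. If $e(R\setminus\{v\},S)=0$, then $R\setminus\{v\}$ is disconnected from the rest, contradicting connectivity (or $R=\{v\}$). So $e(R\setminus\{v\},S)=1$, and this single edge is the only edge between $R':=R\setminus\{v\}$ and $S\cup I\cup\{v\}$. In particular the rest of $R'$ is $3$-regular-ish: $G[R']$ has minimum degree $\geq 2$, exactly one vertex of degree $2$, and surplus at most $8-\Delta-4\leq 0$ counted over $R'$. So $G[R']$ has degree sequence $(2,3,\dots,3)$, which is impossible by the handshaking lemma unless a vertex of degree $4$ compensates. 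I would carry out this parity check carefully, since it may already rule out most configurations.

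\textbf{Splitting.} In the surviving cases we delete the one edge between $R'$ and $S$. This leaves $G_{R'}=G[R']$ and the bipartite-plus-$v$ graph $H=G[S\cup I\cup\{v\}]$, in which $v$ is joined to $\Delta$ vertices of $S$; note $H$ is bipartite with $v$ on the $I$-side. Deleting an edge only helps the Staller, so it suffices to show that one of $G_{R'}, H$ has outcome $\mathcal D$ and the other outcome $\geq\mathcal N$. For $G_{R'}$, with $|R'|\leq 21-|S|-|I|-1$, I would apply \Cref{r-lemma:R_14_no_deg_1}, or \Cref{r-lemma:R_12} together with Erdős–Selfridge when $|R'|$ is small. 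For $H$, the natural strategy is for the Dominator to claim $v$ (or answer a Staller move on $v$ via a pairing). Once $v$ is claimed, the remaining game is an $(S,I)$-game of the type covered by \Cref{si-lemma:S8I9} and \Cref{si-lemma:S9I10}, where $N(v)$ is already dominated.

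\textbf{Main obstacle.} The hardest step is $H$. None of the SI-lemmas directly covers a bipartite graph in which one $I$-side vertex has degree $4$ or $5$, and the split forces us to decide which piece gets the second-player win. My first attempt is to use the hypothesis ``for every maximum degree vertex'': if some vertex of degree $\Delta$ lies in $S\cup I$, choose the decomposition for that vertex instead, to rebalance sizes. Failing that, I would settle $H$ by a direct computer check with \Cref{algorithm:outcome_game_state}; this is feasible since $|V(H)|\leq 2|S|+2\leq 20$ and $H$ is very sparse.
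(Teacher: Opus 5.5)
Your bookkeeping via \Cref{lemma:surplus_in_RSI} is correct and matches the paper's set-up: $\Delta\in\{4,5\}$, $|I|=|S|+1$, $e(S)=0$, $e(R\setminus\{v\},S)\le 1$, and every neighbour of $v$ has degree $3$. The argument then has a genuine gap.

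After writing ``or $R=\{v\}$'' you continue only with $e(R\setminus\{v\},S)=1$, so the case $R=\{v\}$ is never treated. In that case your split is vacuous ($H=G$). Moreover, for $\Delta=5$ this case cannot be excluded by looking at a single $v$. Join $v$ to five degree-$3$ vertices of $S$, each sending two edges into $I$. Give the other four vertices of $S$ degree $5$, with all their edges into $I$, and take $|I|=10$ with all degrees $3$. This is a reduced graph on $20$ vertices (fifteen of degree $3$, five of degree $5$, surplus $8$). It has an RSI-decomposition separating $v$ with $R=\{v\}$ and $N(v)\subseteq S$.

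What rules this graph out is the hypothesis for \emph{every} maximum degree vertex. Running the same analysis at each degree-$5$ vertex shows that the neighbourhoods of the five degree-$5$ vertices are pairwise disjoint $5$-sets of degree-$3$ vertices. That would need $25$ such vertices, but only $15$ exist. Using the hypothesis ``to rebalance sizes'' does not reach this.

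Your parity step is also false. The surplus is saturated, so every vertex of $R\setminus\{v\}$ has degree exactly $3$ in $G$, and no degree-$4$ vertex can compensate. Yet $(2,3,\dots,3)$ is graphical for every odd length, e.g.\ $K_4$ with one edge subdivided.

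The missing idea, which settles every case with no splitting or computation, is to double count $e(S,I)$:
\begin{itemize}
\item every $i\in I$ has all of its at least $3$ neighbours in $S$;
\item each neighbour of $v$ has degree $3$ and uses one edge on $v$, so it sends at most $2$ edges into $I$;
\item every other vertex of $S$ has degree at most $\Delta$, since $v$ has maximum degree.
\end{itemize}
With $|S|=|I|-1$ this gives
\[
3|I|\le e(S,I)\le 2\Delta+\Delta(|S|-\Delta)=\Delta|I|-\Delta^2+\Delta .
\]
For $\Delta=4$ it reads $|I|\ge 12$, which is impossible since $2|I|\le |V(G)|\le 21$. This kills your case $e(R\setminus\{v\},S)=1$ (which forces $\Delta=4$) and also $R=\{v\}$ with $\Delta=4$. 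The case $\Delta=5$ forces $R=\{v\}$, and then the count gives $|I|=10$ with equality throughout. That is exactly the configuration above, which the every-vertex hypothesis excludes.

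Separately, your plan for $H$ would not close as stated. The SI-lemmas you invoke give only first-player wins. After spending his first move on $v$, the Dominator is the second player on the remaining $(S,I)$-game, and those lemmas say nothing about that.
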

\begin{proof}
    Assume there exists a reduced graph $G$ contradicting the proposition. Let $v$ be an arbitrary maximum degree vertex of $G$, and let $(R,S,I)$ be an RSI-decomposition of $G$ separating $v$ such that $N(v) \subseteq S$. Note that because of \Cref{lemma:surplus_in_RSI}, we have $e(R,S)\leq 5$, so that the maximum degree of $G$ is either $3,4$ or $5$, since all the edges containing $v$ are between $R$ and $S$. The Dominator wins on $3$-regular graphs by \Cref{corollary:Dominator_wins_on_regular_graphs}, so we only have to consider the maximum degree to be $4$ or $5$. Furthermore, note that \Cref{lemma:surplus_in_RSI} implies that $|I| = |S| + 1$, since $e(R,S) \geq 4$.

     \textbf{Case 1. } Suppose that there is a maximum degree vertex $v$ of $G$ and an RSI-decomposition $(R,S,I)$ of $G$ separating $v$ such that $N(v) \subseteq S$ and $|R| \geq 2$. If $G$ is disconnected, we are finished by \Cref{lemma:beta_3_disconnected}, so there is some $u \in R\setminus \{v\}$ with a neighbour in $S$. \Cref{lemma:surplus_in_RSI} already forces $\deg(v) = 4$ and all $r \in R \setminus \{v\}$ to have degree $3$ in $G$. Furthermore, the four edges containing $v$ and the one edge containing $u$ and its neighbour in $S$ are the only edges in $E(R,S)$. The connected component $G_u$ of $G[R]$ in which $u$ lies thus has only degree $3$ vertices except for $u$, which has degree $2$ in $G_u$. Such a graph $G_u$ has at least $5$ vertices, so that $|R| \geq 6$ and hence $|I| = |S|+1 \leq 8$. Since $G$ is reduced, the $4$ neighbours of $v$ have at most two neighbours in $I$, and the other $|S| - 4$ vertices of $S$ have degree at most $4$, so that the number of edges between $S$ and $I$ is bounded by 
    \[
        e(S,I) \leq 4(|S|-4) + 8 = 4|I| - 12 < 3|I|,
    \]
    where the last inequality comes from $|I| \leq 8$. This is impossible.

    \textbf{Case 2. } Suppose that for every maximum degree vertex $v$ of $G$ there is an RSI-decomposition $(R,S,I)$ of $G$ separating $v$ such that $N(v) \subseteq S$ and $R = \{v\}$. 

    \textbf{Case 2a. } Suppose that $G$ has maximum degree $5$. We show that the degree sequence of $G$ is
    \[
        (\underbrace{3,\dots,3}_{15},5,5,5,5,5)
    \]
    and that the neighbourhoods of any two degree $5$ vertices are disjoint, which is clearly impossible.

    Fix a degree $5$ vertex $v$ and an RSI-decomposition $(R,S,I)$ separating $v$. By assumptions, we have $R = \{v\}$, and by \Cref{lemma:surplus_in_RSI} all the vertices in $I$ have degree $3$ and $e(S) = 0$. We look at the number of edges between $S$ and $I$. The five neighbours of $v$ all have at most two neighbours in $I$, since they have degree $3$ in the reduced graph $G$. The other vertices in $S$ have at most $5$ neighbours in $I$. We can thus bound the number of edges between $S$ and $I$ by
    \[
        3|I| =  e(S,I) \leq 5(|S|-5) + 10 = 5|I| - 20,
    \]
    with the inequality being strict when any of the vertices in $S \setminus N(v)$ has degree less than $5$. Because $|I| \leq 10$, this thus forces $|I| = 10$ and all the vertices in $S \setminus N(v)$ to have degree exactly $5$. The five neighbours of $v$ and the vertices in $I$ have degree exactly $3$, so that $G$ indeed has the claimed degree sequence. Note further that for any degree $5$ vertex $u\in S\setminus N(v)$, we have $N(u) \subseteq I$, so that $N(v) \cap N(u) = \emptyset$. Since $v$ was arbitrary, the neighbourhoods of any two degree $5$ vertices must be disjoint, which proves the claim.

    \textbf{Case 2b. } Suppose that $G$ has maximum degree $4$. Fix any degree $4$ vertex $v$ and RSI-decomposition $(R,S,I)$ of $G$ separating $v$, for which, by assumptions, $R = \{v\}$. The neighbours of $v$ have at most two neighbours in $I$, and there are at most $|S|-4$ vertices of degree $4$ in $S$, so that the number of edges between $S$ and $I$ is at most
    \[3|I| \leq e(S,I) \leq 4(|S|-4) + 8 = 4|I|-12 < 3|I|,\]
    where the last inequality comes from $|I| \leq 10$. This is a contradiction.
\end{proof}

\begin{proposition}\label{proposition:preprocessed_RSI}
    Let $G$ be a reduced graph on at most $21$ vertices with $\mindeg(G) = 3$ and $\sur(G) \leq 8$ such that $o(G) = \mathcal S$. Then there exists a maximum degree vertex $v$ of $G$, a proper RSI-decomposition $(R,S,I)$ of $G$ separating $v$ and a subgraph $G_{SI}$ of $G$ on the vertex set $S \cup I$ such that for $G_R = G[R]$ the following holds.
    \begin{enumerate}
        \item\label{item:preprocessed_RSI:vert} 
            $2 \leq |R| \leq 14$ and $4 \leq |I| = |S| + 1 \leq 10$.
        \item\label{item:preprocessed_RSI:degGR} 
            $\deg_{G_{R}}(r) \geq 1$ for all $r \in R$.
        \item\label{item:preprocessed_RSI:sur_in_GR}
            $\sur(G_R) \leq 5$.
        \item\label{item:preprocessed_RSI:bipartite} 
            $G_{SI}$ is bipartite. 
        \item\label{item:preprocessed_RSI:degGSI} 
            $\deg_{G_{SI}}(s) \geq 2$ for all $s \in S$ and $\deg_{G_{SI}}(i) = 3$ for all $i \in I$.
    \end{enumerate}
\end{proposition}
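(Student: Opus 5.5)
We argue with $G$ fixed as in the statement and $o(G)=\mathcal S$, and collect the earlier reductions one at a time.

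\textbf{Choice of $v$ and the decomposition.} By \Cref{proposition:mindeg_0_in_GR}, not every maximum degree vertex has all its RSI-decompositions with $N(v)\subseteq S$; otherwise $o(G)\geq\mathcal N$. So we pick a maximum degree vertex $v$ for which no RSI-decomposition separating $v$ satisfies $N(v)\subseteq S$. By \Cref{corollary:Staller_wins_then_proper_RSI-decomp} there is a proper RSI-decomposition $(R,S,I)$ separating $v$, and by our choice of $v$ it has $N(v)\not\subseteq S$. Since $v\in R$ and $I\cap N(v)=\emptyset$ (vertices of $I$ are isolated in $G-S$), $v$ has a neighbour in $R$, which gives $\deg_{G_R}(v)\geq 1$. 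Every $r\in R\setminus\{v\}$ is not isolated in $G-S$ by the definition $I=\Iso(G-S)\setminus\{v\}$, so $\deg_{G_R}(r)\geq1$. This proves \ref{item:preprocessed_RSI:degGR} and also $|R|\geq 2$.

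\textbf{Sizes and surplus.} By \Cref{proposition:i=s+2} we may assume $|I|=|S|+1$. Each $i\in I$ has degree at least $3$ and all its neighbours lie in $S$, so $|S|\geq 3$ and $|I|\geq4$. From $|R|\geq2$ and $|R|+2|S|+1\leq21$ we get $|S|\leq 9$, hence $|I|\leq10$. For the bound on $|R|$, note $|R|=n-2|S|-1\leq 21-7=14$. This gives \ref{item:preprocessed_RSI:vert}. For \ref{item:preprocessed_RSI:sur_in_GR}, apply \Cref{lemma:replace_v_by_w_in_R} with $d=3$ and $|I|-|S|=1$, which gives $\sur(G_R)\leq 8-3=5$. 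Here we should check that the surplus is computed with respect to a maximum degree vertex $w$ of $G_R$, which is exactly what that lemma provides.

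\textbf{The graph $G_{SI}$.} Take $G_{SI}=(S\cup I, E(S,I))$, i.e.\ $G[S\cup I]$ with the edges inside $S$ deleted. This graph is bipartite, since $I$ is independent. That gives \ref{item:preprocessed_RSI:bipartite}. Properness gives $\deg_{G_{SI}}(s)\geq2$ for every $s\in S$. For $i\in I$ we have $\deg_{G_{SI}}(i)=\deg_G(i)$. By \Cref{lemma:surplus_in_RSI}, with $d(|I|-|S|)=3$, the inequality $e(R,S)+2e(S)+2\sum_{i\in I}(\deg(i)-3)\leq 5$ holds. So only a bounded number of vertices of $I$ can have degree above $3$; it allows at most two (each contributing $2$), or one of degree $5$.

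\textbf{Main obstacle: forcing $\deg_{G_{SI}}(i)=3$.} If some $i\in I$ has degree $4$ or $5$, we choose $G_{SI}$ as a spanning subgraph that deletes edges at such $i$ down to degree $3$. We delete only edges $is$ whose endpoint $s$ has degree at least $3$ in $E(S,I)$, so that the bound $\deg_{G_{SI}}(s)\geq2$ survives. We must check that such edges exist. Suppose a high-degree $i$ had all its $S$-neighbours of $G_{SI}$-degree exactly $2$. We would then aim to contradict the surplus budget, since then $e(S,I)$ is small relative to $3|I|$, or alternatively use the reducedness of $G$. We expect this local edge-deletion argument, and the bookkeeping when two high-degree vertices of $I$ share neighbours, to be the delicate step. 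If it fails, we would instead argue that a vertex of degree greater than $3$ in $I$ already forces $o(G)\geq\mathcal N$ via a cheaper case split using \Cref{lemma:surplus_in_RSI}.
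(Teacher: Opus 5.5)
\textbf{Gap in item \ref{item:preprocessed_RSI:degGSI}.} Your arguments for items \ref{item:preprocessed_RSI:vert}--\ref{item:preprocessed_RSI:bipartite} are correct and match the paper. Your choice of $v$ is exactly the negation of the hypothesis of \Cref{proposition:mindeg_0_in_GR}. Item \ref{item:preprocessed_RSI:degGSI} is also settled when every $i\in I$ has degree $3$.

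The gap is the remaining case, where some $i\in I$ has degree $4$ or $5$. Write $G_1=(S\cup I,E(S,I))$. You say you would delete edges $is$ with $\deg_{G_1}(s)\geq 3$, but you never show that enough such edges exist. You also leave open how to handle two high-degree vertices of $I$ that share neighbours. The mechanisms you suggest do not work. The relation $e(S,I)=\sum_{i\in I}\deg(i)\geq 3|I|$ holds automatically and yields nothing, and reducedness plays no role here. So item \ref{item:preprocessed_RSI:degGSI} is not proved.

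\textbf{The missing idea is a charging argument.} Every $s\in S$ has $\deg_G(s)\geq 3$. So if $\deg_{G_1}(s)=2$, then $s$ is incident to at least one edge of $E(R,S)\cup E(S)$. Hence the number of such $s$ is at most $e(R,S)+2e(S)$. By \Cref{lemma:surplus_in_RSI} with $|I|=|S|+1$, this number is at most $5-2\sum_{i\in I}(\deg(i)-3)$.

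\emph{One high-degree vertex.} If exactly one $i_1\in I$ has degree $\geq 4$, then at most three vertices of $S$ have $G_1$-degree $2$. Among the $\deg(i_1)\in\{4,5\}$ neighbours of $i_1$, at least $\deg(i_1)-3$ therefore have $G_1$-degree $\geq 3$. Delete the edges from $i_1$ to exactly $\deg(i_1)-3$ of them.

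\emph{Two high-degree vertices.} Otherwise the budget forces exactly two vertices $i_1,i_2$ of degree $4$. Then at most one vertex of $S$ has $G_1$-degree $2$. So each of $i_1,i_2$ has at least three neighbours of $G_1$-degree $\geq 3$. You can therefore choose distinct $s_1\in N(i_1)$ and $s_2\in N(i_2)$ of $G_1$-degree $\geq 3$ and delete $s_1i_1$ and $s_2i_2$. Distinctness is what resolves the shared-neighbour issue.

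In both cases each $s\in S$ loses at most one edge, so $\deg_{G_{SI}}(s)\geq 2$, and now $\deg_{G_{SI}}(i)=3$ for all $i\in I$.
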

\begin{proof}
    By \Cref{corollary:Staller_wins_then_proper_RSI-decomp}, there exists a proper RSI-decomposition $(R,S,I)$ separating $v$ for every maximum degree vertex $v$ of $G$. If $N(v) \subseteq S$ for every such $v$, we have $o(G) \geq \mathcal N$ by \Cref{proposition:mindeg_0_in_GR}, contradicting the assumptions, so we may assume that there is some maximum degree vertex $v$ and a proper RSI-decomposition $(R,S,I)$ separating $v$, such that $N(v) \cap R \neq \emptyset$. We prove the proposition for this choice of $v$ and $(R,S,I)$. 

    Let us first prove \ref{item:preprocessed_RSI:vert}. If $|I| \geq |S| + 2$, \Cref{proposition:i=s+2} implies $o(G) \geq \mathcal  N$, contradicting $o(G) = \mathcal S$, so that $|I| = |S|+1$. Note that $v$ is not the only vertex in $R$, since otherwise $N(v) \subseteq S$. Furthermore, note that $S$ contains at least $3$ neighbours of some vertex in $I \neq \emptyset$ so that $|I| = |S|+1 \geq 4$. Finally, the upper bounds on the sizes of $R,S$ and $I$ come from $G$ having at most $21$ vertices. 
    
    We already know that $v$ has a neighbour in $R$, and by definition of RSI-decompositions, it must be that $r \notin \Iso(G-S)$ for every $r \in R \setminus \{v\}$, so that \ref{item:preprocessed_RSI:degGR} follows. \Cref{lemma:replace_v_by_w_in_R} immediately gives \ref{item:preprocessed_RSI:sur_in_GR}.

    For the definition of $G_{SI}$ and the properties \ref{item:preprocessed_RSI:bipartite}--\ref{item:preprocessed_RSI:degGSI}, we consider three cases.

    \textbf{Case 1.} Suppose that $\deg_G(i) = 3$ for all $i \in I$. In this case, we define $G_{SI} = (S \cup I, E(S,I))$. Clearly $G_{SI}$ is bipartite, so \ref{item:preprocessed_RSI:bipartite} follows. Property \ref{item:preprocessed_RSI:degGSI} holds because $(R,S,I)$ is proper.
    
    \textbf{Case 2.} There is exactly one element $i_1 \in I$ with $\deg_G(i_1) \geq 4$. In particular, this vertex has degree $4$ or $5$, as otherwise, we have a contradiction to \Cref{lemma:surplus_in_RSI}. Consider the graph $G_1 = (S \cup I, E(S,I))$. By \Cref{lemma:surplus_in_RSI}, we have that $e(R,S)+2e(S) \leq 3$, so that there are at most three vertices of degree $2$ in $G_1$ while all the other vertices have degree at least $3$. We therefore find a set of vertices $S_1 \subseteq N_{G_1}(i_1) = N_{G}(i_1) \subseteq S$ with $|S_1| = \deg_G(i_1)-3$, all of degree at least $3$ in $G_1$. By construction, $G_{SI} = G_1 - \{si_1:s\in S_1\}$ is bipartite and all $s\in S$ have degree at least $2$ in $G_{SI}$. By assumption, $i_1$ was the only vertex of degree greater than $3$ in $I$, so that $\deg_{G_{SI}}(i) = 3$ for all $i\in I$. 
    
    \textbf{Case 3.} There are at least two elements $i_1,i_2 \in I$ with $\deg_G(i_1), \deg_G(i_2) \geq 4$. Again, consider the graph $G_1 = (S \cup I, E(S,I))$. From \Cref{lemma:surplus_in_RSI} we get that $\deg_G(i_1)= \deg_G(i_2) = 4$, that there does not exist another $i_3 \in I \setminus \{i_1,i_2\}$ with $\deg(i_3) \geq 4$ and that there is at most one vertex in $S$ of degree $2$ in $G_1$. We therefore find two distinct vertices $s_1 \in N_{G_1}(i_1) = N_{G}(i_1) \subseteq S$ and $s_2 \in N_{G_1}(i_2) = N_{G}(i_2) \subseteq S$ both of degree at least $3$ in $G_1$. By construction, $G_{SI} = G_1 - \{s_1i_1,s_2i_2\}$ is bipartite and all $s\in S$ have degree at least $2$ in $G_{SI}$. We already know that $i_1$ and $i_2$ were the only vertices of degree greater than $3$ in $I$, so that $\deg_{G_{SI}}(i) = 3$ for all $i\in I$.
\end{proof}

We are now ready to prove \Cref{theorem:beta_3_for_sur_8}, which we restate for convenience.
\betathreesureigth*
\begin{proof}
    Assume the statement does not hold, that is there is a reduced graph $G$ on at most $21$ vertices, with minimum degree $3$, $\sur(G) \leq 8$ and $o(G) = \mathcal S$. By \Cref{proposition:preprocessed_RSI} there is a maximum degree vertex $v$ of $G$, an RSI-decomposition $(R,S,I)$ of $G$ separating $v$ and graphs $G_{SI}$ and $G_R = G[R]$ such that conditions \ref{item:preprocessed_RSI:vert}--\ref{item:preprocessed_RSI:degGSI} are satisfied. Note that $e(R,S) \leq 5$ by \Cref{lemma:surplus_in_RSI}. Depending on the size of $R$, we consider the following cases.

    \textbf{Case 1.} Suppose that $|R| = 2$. By \ref{item:preprocessed_RSI:degGR} in \Cref{proposition:preprocessed_RSI}, $G_R$ is isomorphic to $K_2$, on which the Dominator wins going second. Note that $|S|+1 = |I| \leq 10$ and that $\deg_G(v) \leq 4$, because $e(R,S) \leq 5$ and since all the vertices in $R$ must have degree at least $3$ in $G$. It follows that every vertex in $G_{SI}$ has degree at most $4$, thus we can apply \Cref{si-lemma:S9I10}. We get $o(G_{SI}) \geq \mathcal N$, and thus $o(G) \geq \mathcal N$.
    
    \textbf{Case 2.} Suppose that $|R| = 3$. There are only two graphs on three vertices fulfilling \ref{item:preprocessed_RSI:degGR} in \Cref{proposition:preprocessed_RSI}, namely $P_3$ and $K_3$. If $G_R$ is isomorphic to $P_3$, then $\deg_G(v) = 3$ since $e(R,S) \leq 5$ and since all the vertices in $R$ have degree at least $3$ in $G$. In this case, $G$ is regular and $o(G) \geq \mathcal N$ by \Cref{corollary:Dominator_wins_on_regular_graphs}. For $G_R$ being isomorphic to $K_3$, note that $|S|+1 = |I| \leq 9$ so that from \Cref{si-lemma:S8I9} it follows that $o(G_{SI}) \geq \mathcal N$. Since $o(K_3) = \mathcal D$, we get $o(G) \geq \mathcal N$.

    \textbf{Case 3.} Suppose that $|R| = 4$. \Cref{r-lemma:R_4} implies that $o(G_R) = \mathcal D$. Note that $|S|+1 = |I| \leq 9$ so that from \Cref{si-lemma:S8I9} it follows that $o(G_{SI}) \geq \mathcal N$ and therefore $o(G) \geq \mathcal N$.

    \textbf{Case 4.} Suppose that $5 \leq |R| \leq 6$. We apply \Cref{r-lemma:R_5_6} and split into two further subcases depending on the conclusion in this lemma. Note that in both cases $|S|+1 = |I| \leq 8$.
    
    \textbf{Case 4a.} Suppose that $o(G_R)= \mathcal D$. \Cref{si-lemma:S8I9} tells us that $o(G_{SI}) \geq \mathcal N$, and it therefore follows that $o(G) \geq \mathcal N$.
    
    \textbf{Case 4b.} Suppose that there exists $u \in R$ with $\deg_{G_R}(u)=1$ and $o(G_R') = \mathcal D$, where $G_R' = G_R - u$. Add the vertex $u$ and two of its edges into $S$ to the graph $G_{SI}$ to obtain the new graph $G_{SI}'$, that is, $V(G_{SI}') = V(G_{SI}) \cup \{u\}$ and $E(G_{SI}') = E(G_{SI}) \cup \{uw_1, uw_2\}$, where $w_1, w_2 \in S \cap N_G(u)$ are distinct. Since $e(R,S)\leq 5$, there are now at most three vertices in $S$ that are of degree $2$ in $G_{SI}'$. Thus, the graph $G_{SI}'$ satisfies the conditions of \Cref{si-lemma:S7I9} so that $o(G_{SI}') \geq \mathcal N$ and thus $o(G) \geq \mathcal N$. 
    
    \textbf{Case 5.} Suppose that $7 \leq |R| \leq 12$. \Cref{si-lemma:S6I7} implies that $o(G_{SI})=\mathcal D$ and \Cref{r-lemma:R_12} implies $o(G_R) \geq \mathcal N$. Therefore, $o(G) \geq \mathcal N$.
    
    \textbf{Case 6.} Suppose that $13 \leq |R| \leq 14$. All the vertices in $I$ have degree $3$ in $G_{SI}$, so that $G_{SI}$ must be isomorphic to $K_{3,4}$, on which the Dominator wins going second. We split into further subcases depending on the arrangement of the degree $1$ vertices in $G_R$.
    
    \textbf{Case 6a.} Suppose that there are no degree $1$ vertices in $G_R$. It then follows from \Cref{r-lemma:R_14_no_deg_1} that $o(G_R) \geq \mathcal N$, and thus $o(G) \geq \mathcal N$.
    
    \textbf{Case 6b.} Suppose that there is exactly one degree $1$ vertex $u$ of $G_R$. In $G$, $u$ has at least two distinct neighbours $u_1, u_2 \in S$. Consider the graph $G_R'$ we get by adding a fresh vertex $w$ together with the edge $uw$ to $G_R$, that is, $V(G_R') = V(G_R) \cup \{w\}$ and $E(G_R') = E(G_R) \cup \{uw\}$. Note that the potential of the game state $(G_R', \{w\},\emptyset)$ is less than $1$. Indeed, since there are at most $3$ degree $2$ vertices in $G_R$, the potential is maximized if $G_R$ has the degree sequence 
    \[(1,2,2,2,3,3,3,3,3,3,3,3,3,4),\]
    in which case $\pot(G_R', \{w\},\emptyset) = \frac{3}{2^3} + \frac{9}{2^4} + \frac{1}{2^5} < 1$. By \Cref{theorem:ES_for_gs}, the Dominator has a winning strategy on $(G_R', \{w\},\emptyset)$ going first, and by \Cref{si-lemma:K34_special_strat}, the Dominator has a winning strategy on $G_{SI}$ as the second player, with which he can also guarantee to claim a vertex from $\{u_1,u_2\}$. Combining these two strategies, we see that the Dominator wins on $G$ going first. Indeed, at the end of the game all the vertices in $V(G) \setminus \{u\}$ are clearly dominated in $G$, and $u$ is dominated, since at least one of its neighbours $u_1$ or $u_2$ is claimed by the Dominator.
    
    For the remaining subcases, we may assume that there are exactly two vertices $u,u' \in R$ with degree $1$ in $G_R$.

    \textbf{Case 6c.} Suppose that $N_{G_R}(u) = \{w\} \neq \{w'\} = N_{G_R}(u')$. We move the vertices $u$ and $u'$ from $G_R$ to $G_{SI}$ to obtain new graphs $G_R'$ and $G_{SI}'$, that is, $G_R' = G_R -\{u, u'\}$ and $G_{SI}' = G[S\cup I \cup \{u, u'\}]$. We prove that $o(G_{SI}') = \mathcal D$ and that $o(G_R') \geq \mathcal N$, from which $o(G) \geq \mathcal N$ follows. Using the notation from \Cref{si-lemma:K34_with_extra_vtx}, either $G_{SI}^{(1)} - w^{(1)}$ or $G_{SI}^{(2)} - w^{(2)}$ is a spanning subgraph of $G_{SI}'$ so that by \Cref{si-lemma:K34_with_extra_vtx} we have $o(G_{SI}') = \mathcal D$. For $G_R'$, note that the potential $\pot(G_R')$ is maximized when $w$ and $w'$ have degrees $2$ and $3$ in $G_R$ and when the degree sequence of $G_R'$ is 
    \[
        (1,2,3,3,3,3,3,3,3,3,3,4).
    \]
    Since $\frac{1}{2^2} + \frac{1}{2^3} + \frac{9}{2^4} + \frac{1}{2^5} < 1$, it follows from \Cref{theorem:ES_for_gs} that $o(G_R') \geq \mathcal N$.
    
    \textbf{Case 6d. } Suppose that $N_{G_R}(u) = N_{G_R}(u') = \{w\}$ and $\deg_{G_R}(w) > 3$. As before, we move the vertices $u$ and $u'$ from $G_R$ to $G_{SI}$ so that we get the new graphs $G_R' = G_R -\{u, u'\}$ and $G_{SI}' = G[S\cup I \cup \{u, u'\}]$. Again, $o(G_{SI}') = \mathcal D$ holds by \Cref{si-lemma:K34_with_extra_vtx}. For $G_R'$, note that the potential $\pot(G_R')$ is maximized when $w$ has degree $4$ in $G_R$ and when the degree sequence of $G_R'$ is 
    \[
        (2,2,3,3,3,3,3,3,3,3,3,3).
    \]
    Since $\frac{2}{2^3} + \frac{10}{2^4} < 1$, it again follows from \Cref{theorem:ES_for_gs} that $o(G_R') \geq N$.    
    
    \textbf{Case 6e. } Suppose that $N_{G_R}(u) = N_{G_R}(u') = \{w\}$ and $\deg_{G_R}(w) \leq 3$. We move the vertices $u, u'$ and $w$ from $G_R$ to $G_{SI}$ to obtain new graphs $G_R'$ and $G_{SI}'$, that is, $G_R' = G_R -\{u, u', w\}$ and $G_{SI}' = G[S\cup I \cup \{u, u', w\}]$. Either $G_{SI}^{(1)}$ or $G_{SI}^{(2)}$ is a spanning subgraph of $G_{SI}'$ so that by \Cref{si-lemma:K34_with_extra_vtx} we have $o(G_{SI}') = \mathcal D$. The potential of $G_R'$ is maximized when $w$ has a neighbour $w' \in V(G_R')$ that has degree $2$ in $G_R$ and when the degree sequence of $G_R'$ is 
    \[
        (1,3,3,3,3,3,3,3,3,3,4).
    \] 
    Since $\frac 1{2^2} + \frac {9}{2^4} + \frac{1}{2^5} < 1$, it follows that $o(G_R') \geq \mathcal N$ and thus $o(G) \geq \mathcal N$. 
\end{proof}

\section{High Surplus: Completing Cores}\label{section:high_surplus}

In this section we finish the proof that $\beta(3) = 22$. Note that, together with \Cref{theorem:beta_3_for_sur_8} and \Cref{lemma:reduction_to_reduced_graphs}, the following theorem immediately implies this result. 

\begin{theorem}\label{theorem:beta_3_for_high_sur}
    Let $G$ be a reduced graph on at most $21$ vertices and $\mindeg(G) = 3$, and assume that $\sur(G) \geq 9$. Then $o(G) \geq \mathcal N$.
\end{theorem}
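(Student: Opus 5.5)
We plan to argue by contradiction combined with a computational reduction, this time driven by the Erd\H{o}s-Selfridge potential rather than by RSI-decompositions. High surplus means many vertices of degree larger than $3$, or one very high degree vertex, and both lower the potential. Concretely, let $G$ be a reduced graph on $n\le 21$ vertices with $\mindeg(G)=3$, $\sur(G)\ge 9$ and $o(G)=\mathcal S$. We first apply \Cref{cor:ES} and \Cref{lemma:one_step_deg_seq}: if the potential of the degree sequence is below $1$, or the one-step quantity $\sum_{i\le n-d_n-1}2^{-d_i-1}$ is below $\frac12$, then $o(G)\ge\mathcal N$. Enumerating all degree sequences with $n\le 21$, minimum $3$, even sum and surplus at least $9$, we expect only a short list of survivors to remain. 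These should have few vertices of degree exceeding $3$, at most $21$ vertices but close to it, and a moderately large maximum degree.

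Next we exploit reducedness. Vertices of degree greater than $3$ form an independent set, and all their neighbours have degree $3$. We call the subgraph induced on the high-degree vertices together with their neighbourhoods (or a suitable part of it) a \emph{core}. We would then run a potential-guided \emph{core completion}. We enumerate the possible cores, which are small bipartite-like structures determined by the high-degree vertices and their degree-$3$ neighbours. For each core we consider the game state after the Dominator claims a maximum degree vertex, or after short forced sequences, and compute an upper bound on the potential of any completion to a graph with $\le 21$ vertices. The remaining vertices all have degree $3$ and each contributes $2^{-4}$, so the bound depends only on the core and the number of extra vertices. Cores whose bound falls below the relevant threshold are discarded. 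The surviving cores are completed in all possible ways, generated with \nauty, respecting reducedness and the degree sequence, and checked with \Cref{algorithm:outcome_game_state}. If possible, the \pcc\ game-state machinery (\texttt{pccgs.cpp}) would prune game states that are already decided by a pairing or potential argument.

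Finally, we would treat separately the cases where the maximum degree vertex $v$ is large, say $\deg(v)\ge n/2$. Here a direct argument is available: the Dominator claims $v$ first and the remaining undominated vertices have small potential. We would also reuse \Cref{lemma:beta_3_disconnected} for disconnected graphs and \Cref{theorem:beta_3_for_sur_8} for any borderline sequence whose surplus accounting can be shifted, for example by choosing a different vertex $v$.

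The main obstacle is the computational size of the middle step. The number of reduced graphs on $20$–$21$ vertices with minimum degree $3$ is far too large to enumerate naively. The crux is therefore to choose the core and the potential bound tightly enough that only a manageable number of completions survive. Accordingly, we first attempt the cheapest pruning: the one-step Erd\H{o}s-Selfridge bound after the Dominator claims the top vertex and the Staller replies. Only if this is insufficient would we add two-move lookahead on the core.
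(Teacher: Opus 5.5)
Your outline has the right skeleton: Erd\H{o}s--Selfridge pruning of degree sequences, enumeration of the bipartite cores of reduced graphs, then completions. However, the step that makes the argument feasible is missing, and the replacement you propose does not do the job.

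First, the core-level pruning you describe is vacuous. In a reduced graph with $\sur(G)\ge 9$, every neighbour of a maximum-degree vertex $v$ has degree $3$, and every other vertex of degree greater than $3$ lies outside $N[v]$. Hence $\pot(G,\{v\},\emptyset)=(n_3-\deg(v))2^{-4}+\sum_u 2^{-\deg(u)-1}$, where $n_3$ is the number of degree-$3$ vertices and $u$ ranges over the other high-degree vertices. This quantity is determined by the degree sequence alone, so it discards no core that the sequence-level test has not already discarded.

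Second, one move deeper the Staller may claim a degree-$3$ vertex whose remaining edges are not in the core. The potential then depends on those unknown edges. So your claim that the bound depends only on the core and the number of extra vertices fails exactly where it would be needed.

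What remains of your plan is to complete every surviving core in all possible ways and run \Cref{algorithm:outcome_game_state} on each completion. That amounts to generating every reduced graph with one of the surviving degree sequences and solving the game on each $20$- or $21$-vertex graph. This is precisely the enumeration the core approach is designed to avoid, and you name it as the main obstacle without supplying a way around it.

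The missing idea is the paper's lazy completion. \Cref{algorithm:potential_outcome} plays the game directly on a partially completed core $P$, as follows.
\begin{itemize}
\item The Dominator may only claim vertices of degree at least $3$ in $P$, whose neighbourhoods are the same in every completion.
\item Every Staller move on an uncompleted vertex is handled by an Erd\H{o}s--Selfridge bound that assigns it its potential-wise worst possible hidden neighbours.
\item The algorithm returns \unsure\ whenever this is inconclusive.
\end{itemize}
\Cref{algorithm:completion_outcome} then inserts one hidden edge at a time at a chosen uncompleted vertex, branching over all admissible partners. It re-examines only those partial completions that are still undecided. Even so, applying this to the roughly $3.76\cdot 10^9$ cores arising from the $118$ degree sequences that survive the two-step bound of \Cref{lemma:two_step_potential} costs about $3184$ CPU hours. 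That gives an idea of the scale that full completion would face.

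A minor remark: choosing a different vertex $v$ in order to invoke \Cref{theorem:beta_3_for_sur_8} gains nothing. The value $\surp{3}{G}{v}$ is the same for all maximum-degree vertices and only grows for any other choice of $v$.
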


Throughout this section $G$ is a graph of minimum degree $d$, and we assume that $G$ is reduced. Recall that reduced means that no vertices of degree greater than $d$ are adjacent. Then the \emph{core} of $G$ is the bipartite spanning subgraph $C$ of $G$ with edge set
\[
    E(C) = \{uv \in E(G) : \deg_G(u) = d \text{ and } \deg_G(v) > d\}.
\]

If $P_1$ and $P_2$ are spanning subgraphs of $G$ with
\[
    E(C) \subseteq E(P_1)\subseteq E(P_2)\subseteq E(G),
\]
then $P_1$ and $P_2$ are called \emph{partially completed cores}, or \emph{\pcc s} for short, and furthermore we say that $P_2$ is a \emph{partial completion of $P_1$}. In case $P_2$ has minimum degree $d$, which happens if and only if $P_2 = G$, we say that $P_2$ is a \emph{completion} of $P_1$. In the remainder of the section, all reduced graphs are of minimum degree $d$, all cores are cores of such reduced graphs of minimum degree $d$, and all \pcc s are partial completions of such cores.

\begin{figure}[ht]
    \centering
        \begin{tikzpicture}[scale=1]
        \node[vertex] (L1) at (0,  2.5) {};
        \node[vertex] (L2) at (0,  1) {};
        \node[vertex] (L3) at (0, -1) {};
        \node[vertex] (L4) at (0, -2.5) {};
        
        \node[vertex] (R1)  at (4.0,  3.0) {};
        \node[vertex] (R2)  at (4.0,  1.0) {};
        \node[vertex] (R3)  at (4.0,  0.0) {};
        \node[vertex] (R4)  at (4.0, -2.4) {};
        
        \node[vertex] (R5)  at (5.5,  2.4) {};
        \node[vertex] (R6)  at (5.5,  1.6) {};
        \node[vertex] (R7)  at (5.5,  0.4) {};
        \node[vertex] (R8)  at (5.5, -1.0) {};
        \node[vertex] (R9)  at (5.5, -2.2) {};
        
        \node[vertex] (R10) at (7.0, -0.5) {};
        
        \draw[edge] (L1)--(R5);
        \draw[edge] (L1)--(R3);
        \draw[edge] (L1)--(R1);
        \draw[edge] (L1)--(R2);
        \draw[edge] (L1)--(R6);
        
        \draw[edge] (L2)--(R5);
        \draw[edge] (L2)--(R1);
        \draw[edge] (L2)--(R2);
        \draw[edge] (L2)--(R4);
        \draw[edge] (L2)--(R7);
        
        \draw[edge] (L3)--(R3);
        \draw[edge] (L3)--(R1);
        \draw[edge] (L3)--(R4);
        \draw[edge] (L3)--(R8);
        \draw[edge] (L3)--(R9);
        
        \draw[edge] (L4)--(R3);
        \draw[edge] (L4)--(R2);
        \draw[edge] (L4)--(R4);
        \draw[edge] (L4)--(R7);
        \draw[edge] (L4)--(R8);
        
        \draw[edge,dashed] (R10)--(R9);
        \draw[edge,dashed] (R10)--(R7);
        \draw[edge,dashed] (R10)--(R6);
        
        \draw[edge,dashed] (R8)--(R9);
        \draw[edge,dashed] (R5)--(R6);
    \end{tikzpicture}
    \caption{A reduced graph $G$ of minimum degree $3$ and its core $C(G)$. The dashed edges belong to $G$ but not to $C(G)$; all the other edges belong to both $G$ and $C(G)$.}
    \label{fig:placeholder}
\end{figure}

The rough structure of the proof of \Cref{theorem:beta_3_for_high_sur} is as follows. First, we use \Cref{algorithm:sequence_generator} to generate all degree sequences of reduced graphs on at most $21$ vertices of minimum degree $3$ and surplus at least $9$, which are not ruled out by \Cref{lemma:two_step_potential}, an easy application of the Erd\H os-Selfridge theorem. Using \nauty, we then generate the cores of the reduced graphs having one of these degree sequences. Since cores are bipartite and we have quite restrictive degree conditions, generating the cores is much easier than generating the reduced graphs themselves. Finally, we use \Cref{algorithm:completion_outcome} to check whether the Dominator wins on all completions of these cores going first.

The following lemma should be seen as a two-step analogue to \Cref{lemma:one_step_deg_seq} for degree sequences of reduced graphs. Instead of bounding the potential after one step into the game, we bound it after two steps. 

\begin{lemma}[Two-step Erd\H os-Selfridge for degree sequences of reduced graphs]\label{lemma:two_step_potential}
    Let $G$ be a reduced graph of minimum degree $d$, and assume that $N:=|V(G)| \leq 2^{d+3} - 8d - 8$. Let $(d_1,\dots,d_N)$ be the degree sequence of $G$, and let $n$ be minimal with $d_n > d$. If
    \begin{align}\label{align:two_step_potential}
        \max_{n \leq i \leq N-1}\ \ \frac{n-1-d_N+\min(d_i, n-1 - d_N)}{2^{d+1}}+ 2^{-d_i} + \sum_{\substack{j = n\\ j \neq i}}^{N-1} 2^{-d_j - 1} < 1
    \end{align}
    then $o(G) \geq \mathcal N$.
\end{lemma}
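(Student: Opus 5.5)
The plan is to exhibit an explicit strategy for the Dominator as first player. He first claims a vertex $v$ of maximum degree $d_N$. After the Staller's reply $x$, he checks via \Cref{theorem:ES_for_gs} that the game state $(G,\{v\},\{x\})$ has potential less than $1$; since it is then the Dominator's turn, that gives $o_{\mathcal D}(G,\{v\},\{x\})=\mathcal D$ for every $x$, hence $o(G)\geq\mathcal N$.

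\textbf{Step 1: the potential after the first move.} Since $G$ is reduced and $d_N>d$, every neighbour of $v$ has degree $d$, and no vertex of degree $>d$ other than $v$ is adjacent to $v$. Hence $\pot(G,\{v\},\emptyset)$ is the sum of two kinds of terms:
\begin{itemize}
\item the $n-1-d_N$ vertices of degree $d$ outside $N[v]$, each contributing $2^{-(d+1)}$;
\item the high-degree vertices $j=n,\dots,N-1$, each contributing $2^{-d_j-1}$.
\end{itemize}
By \Cref{align:pot_change}, the Staller's move $x$ increases the potential by exactly $\pot_{(G,\{v\},\emptyset)}(x)$, the sum of the current weights of the undominated vertices in $N[x]$.

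\textbf{Step 2: the Staller picks a high-degree vertex $x$ with index $i\in\{n,\dots,N-1\}$.} The weight of $x$ doubles, from $2^{-d_i-1}$ to $2^{-d_i}$. All neighbours of $x$ have degree $d$ because $G$ is reduced, and at most $\min(d_i,n-1-d_N)$ of them are still undominated. Each such neighbour gains an extra $2^{-(d+1)}$. The resulting potential is exactly bounded by the $i$-th term of the maximum in \Cref{align:two_step_potential}, so it is below $1$ by hypothesis.

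\textbf{Step 3: the Staller picks a vertex $x$ of degree $d$.} Here the increase is at most the total weight of $N[x]$, i.e. at most $d+1$ vertices.
\begin{itemize}
\item If $n-1-d_N\geq d+1$ and there is at least one high-degree vertex besides $v$, take any $i\geq n$. The term $\min(d_i,n-1-d_N)\geq d+1$ in the $i$-th expression already accounts for up to $d+1$ doubled degree-$d$ weights. A high-degree neighbour $j$ of $x$ only adds $2^{-d_j-1}<2^{-(d+1)}$, so this replaces a degree-$d$ neighbour to the Dominator's advantage. The resulting potential is therefore dominated by the $i$-th term of \Cref{align:two_step_potential}.
\item If $n-1-d_N\leq d$, or if $n=N$ so that the maximum is over an empty set, the hypothesis \Cref{align:two_step_potential} gives nothing. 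Here I plan to bound directly, using that at most $n-1-d_N$ undominated degree-$d$ vertices exist at all. The new potential is then at most $2(n-1-d_N)/2^{d+1}$ plus at most twice the high-degree contribution. The number of high-degree vertices is controlled by $N\leq 2^{d+3}-8d-8$, together with edge counting in the reduced graph: each high vertex has at least $d+1$ neighbours, all of degree $d$.
\end{itemize}

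This last regime is where I expect the real work to lie. The crucial use of the bound $N\leq 2^{d+3}-8d-8$ is to show that $2^{d+3}$ times the potential stays below $2^{d+3}$ there. The constants $8d+8=8(d+1)$ strongly suggest the inequality $\pot(G,\{v\},\emptyset)+(d+1)2^{-(d+1)}<1$, obtained by bounding every remaining weight by $2^{-(d+1)}$ (or $2^{-(d+2)}$ for high-degree vertices) and counting vertices. I will try first to derive precisely this crude inequality from the vertex bound, and only fall back on the finer case split above if the crude estimate fails for some range of $n$.
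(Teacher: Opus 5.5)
Your Steps 1 and 2 and the first bullet of Step 3 coincide with the paper's proof; the paper uses the term $i=N-1$ where you allow any $i\ge n$. The remaining regime $n-1-d_N\le d$ is left as a plan, and of your two candidate bounds only the second one works.

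Doubling the whole potential, i.e.\ bounding by $2\pot(G,\{v\},\emptyset)$, gives from vertex counting only $\frac{d}{2^{d}}+\frac{N}{2^{d+2}}$. This is not below $1$ for $N$ near $2^{d+3}-8d-8$. The correct observation is the one in your last paragraph: the Staller's move changes only the weights of the at most $d+1$ undominated vertices of $N[x]$, each by at most $2^{-(d+1)}$. Let $A$ and $B$ be the sets of vertices of degree $>d$ and $=d$. Reducedness gives $(d+1)|A|\le e(A,B)\le d|B|$, hence $|A|<|B|$ and $N-n\le N/2$. Therefore
\[
\pot(G,\{v\},\{x\})\le \frac{(n-1-d_N)+(d+1)}{2^{d+1}}+\frac{N-n}{2^{d+2}}<\frac{d+1}{2^{d}}+\frac{N}{2^{d+3}}\le 1,
\]
and the last inequality is exactly $N\le 2^{d+3}-8d-8$. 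This is precisely how the paper finishes, so once this computation is written out your proof is the paper's proof.

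The one case your plan cannot close is $n=N$ with $n-1-d_N\ge d+1$. In fact no two-move potential bound can close it. For $d=3$, $N=21$ and degree sequence $(3,\dots,3,4)$, one already has $\pot(G,\{v\},\emptyset)=16/16=1$.

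The paper's proof has the same blind spot: its subcase $n\ge d_N+d+2$ invokes the $(N-1)$-th term of the maximum, which exists only when $n\le N-1$. So either read the lemma as requiring at least two vertices of degree $>d$, or handle this case separately. Suppose $v$ is the only vertex of degree $>d$. Then an RSI-decomposition $(R,S,I)$ separating $v$ would give $d|I|\le e(S,I)\le d|S|$, contradicting $|I|>|S|$. Hence $G$ has a $v$-factor by \Cref{proposition:v-factor_RSI-decomp}, and $o(G)\ge\mathcal N$.
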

\begin{proof}
    Let $v \in V(G)$ be a vertex of degree $d_N$. We show that for every $w \in V(G) \setminus \{v\}$ we have $\pot(G, \{v\}, \{w\}) < 1$, from which $o(G) \geq \mathcal N$ follows from the Erd\H{o}s-Selfridge Theorem. Let $A,B \subseteq V(G)$ denote the sets of vertices of degree greater than $d$ and of degree $d$, respectively. First, if $w \in A \setminus \{v\}$ is of degree $d_i$ with $n \leq i \leq N-1$, then
    \begin{align*}
        \pot(G, \{v\}, \{w\}) &= \frac{|B \setminus (N(v) \cup N(w))|}{2^{d+1}}+\frac{|N(w) \setminus N(v)|}{2^{d}}+ 2^{-d_i} + \sum_{\substack{j = n \\ j \neq i}}^{N-1} 2^{-d_j - 1}.
    \end{align*}
    Since $N(v), N(w) \subseteq B$, we have that 
    \begin{align*}
        |B \setminus (N(v) \cup N(w))| + 2|N(w) \setminus N(v)| &= |B \setminus N(v)| + |N(w) \setminus N(v)| \\
        &\leq n-1-d_N + \min(d_i, n-1 - d_N)
    \end{align*}
    so that by \Cref{align:two_step_potential} we indeed get $\pot(G,\{v\}, \{w\}) < 1$. Let us now assume that $w \in B$. Note that we have 
    \begin{align*}
        &|B \setminus (N(v) \cup N[w])| + |N[w] \cap B \setminus N(v)| \leq n-1-d_N \quad \text{and}  \\
        &|N[w] \cap B \setminus N(v)|+ |A \cap N(w) \setminus \{v\}| \leq d+1,
    \end{align*}
    so that
    \begin{align*}
        \pot(G, \{v\}, \{w\}) &= \frac{|B \setminus (N(v) \cup N[w])|}{2^{d+1}} + \frac{|N[w] \cap B \setminus N(v)|}{2^d}\\ 
        &+ \sum_{u \in A \cap N(w) \setminus \{v\}} 2^{-\deg(u)} + \sum_{u \in A \setminus (N(w) \cup \{v\})} 2^{-\deg(u) - 1}\\
        &\leq \frac{|B \setminus (N(v) \cup N[w])| + 2|N[w] \cap B \setminus N(v)| + |A \cap N(w) \setminus \{v\}|}{2^{d+1}}+ \sum_{j = n}^{N-1} 2^{-d_j - 1}\\
        &\leq \frac{n-d_N+d}{2^{d+1}} + \sum_{\substack{j = n}}^{N-1} 2^{-d_j - 1}
    \end{align*}
    follows.
    Now, if $n \geq d_N + d + 2$, then 
    \begin{align*}
        \frac{n-d_N+d}{2^{d+1}} + \sum_{j = n}^{N-1} 2^{-d_j - 1} \leq \frac{n-1-d_N+\min(d_{N-1}, n - 1 - d_N)}{2^{d+1}}+ 2^{-d_{N-1}} + \sum_{j=n}^{N-2} 2^{-d_j - 1} < 1
    \end{align*}
    and if $n < d_N + d + 2$, that is, $n - d_N + d < 2d+2$, then
    \begin{align*}
        \frac{n-d_N+d}{2^{d+1}} + \sum_{j = n}^{N-1} 2^{-d_j - 1} < \frac{d+1}{2^{d}} + \frac{N/2}{2^{d+2}} \leq 1.
    \end{align*}
    Here the first inequality follows from $n \geq N/2$, which holds since $G$ is reduced, and the second inequality follows from the assumption $N\leq 2^{d+3} - 8d - 8$. In every case we indeed have $\pot(G,\{v\}, \{w\}) < 1$, which finishes the proof.
\end{proof}

\begin{algorithm}[ht]
    \caption{\textsc{GenerateSeqs}$(N, d, s)$}
    \label{algorithm:sequence_generator}
    \begin{algorithmic}[1]
        \REQUIRE Positive integers $N, d, s$ with $d < N$
        \ENSURE A set containing all degree sequences of reduced graphs on $N$ vertices of minimum degree $d$ and surplus at least $s$, for which \Cref{align:two_step_potential} does not hold
        
        \STATE $L \gets \emptyset$
        \STATE $(d_1,\dots,d_N)\gets (d,\dots,d,d+1,d+1)$ \label{algorithm_line:sequence_generator_seq_instanciation}
        \WHILE{$d_1 \neq d$}
            \STATE $n \gets \min\{i \in [N] : d_i >d\}$
            \IF{$d_N < N$ and \Cref{align:two_step_potential} does not hold}\label{algorithm_line:sequence_generator_if_inequality_does_not_hold}
                \IF{$\sum_{j=n}^{N-1} (d_j -d) \geq s$ and $2\mid\big((n-1)\cdot d + \sum_{j=n}^{N} d_j\big)$ and $(n-1)\cdot d \geq \sum_{j=n}^{N}d_j$}\label{algorithm_line:sequence_generator_if_deg_seq_conditions}
                    \STATE $L \gets L \cup \{(d_1,\dots,d_N)\}$ \label{algorithm_line:sequence_generator_add_seq}
                \ENDIF
                \STATE $d_N \gets d_N + 1$ \label{algorithm_line:sequence_generator_inc_last}
            \ELSE
                \STATE $k \gets \max\{ j \in [N] : d_j \neq d_N\}$
                \STATE $(d_k,d_{k+1}, \dots, d_N) \gets (d_k +1, \dots, d_k + 1)$ \label{algorithm_line:sequence_generator_inc_last_few}
            \ENDIF
        \ENDWHILE
        \STATE \textbf{return} $L$
    \end{algorithmic}
\end{algorithm}

\Cref{algorithm:sequence_generator} lexicographically iterates over all potential degree sequences of minimum degree $d$ and checks whether 
\begin{itemize}
    \item \Cref{align:two_step_potential} does not hold,
    \item the surplus is at least some fixed $s$,
    \item the sum of the degrees is even,
    \item the degree sequence can be of a reduced graph.
\end{itemize}
and returns a list of the remaining degree sequences.

The implementation of \Cref{algorithm:sequence_generator} which we use to generate the degree sequences of the reduced graphs whose cores we later want to check can be found in \texttt{sequence\_generator.cpp}. 

\begin{proof}[Proof of correctness of \Cref{algorithm:sequence_generator}]
    First observe that whenever the sequence $(d_1, \dots, d_N)$ is defined, it is non-decreasing with $d_N \leq N$. Indeed, $(d_1,\dots,d_N)$ is only ever updated in \lineref{algorithm_line:sequence_generator_inc_last} and \lineref{algorithm_line:sequence_generator_inc_last_few}, both updates preserve monotonicity, and $d_N$ only increases in \Cref{algorithm_line:sequence_generator_inc_last}, in which case $d_N < N$ by the if query in \lineref{algorithm_line:sequence_generator_if_inequality_does_not_hold}. Furthermore, observe that the while loop runs through the sequences $(d_1,\dots,d_N)$ in lexicographic order; that is, if in some iteration of the loop the sequence is $(s_1,\dots,s_N)$ and in the next iteration it is $(s_1',\dots,s_N')$ then $(s_1, \dots, s_N) <_{lex} (s_1',\dots,s_N')$. Here $(s_1, \dots, s_N) <_{lex} (s_1',\dots,s_N')$ if there is $k \in [N]$ with $s_i = s_i'$ for $i < k$ and $s_{k} < s_{k}'$. In particular, the while loop terminates as there are only finitely many non-decreasing sequences $(d_1,\dots,d_N)$ with $d_N \leq N$.
    
    Now let $\mathbf s=(s_1,\dots, s_n)$ be a degree sequence of a reduced graph $G$ of minimum degree $d$ and $\sur(G) \geq s$, for which \Cref{align:two_step_potential} does not hold. We have to show that at some point $\mathbf s$ is added to $L$ in \lineref{algorithm_line:sequence_generator_add_seq}. By assumption $s_N < N$, \Cref{align:two_step_potential} does not hold for $\mathbf s$, and the three conditions in \lineref{algorithm_line:sequence_generator_if_deg_seq_conditions} are satisfied for $\mathbf s$: the first one follows from $\sur(G) \geq s$, the second one from the handshaking lemma, and the third one from the fact that $G$ is reduced. This shows that in fact it is enough to prove that $(d_1,\dots,d_N) = \mathbf s$ in some iteration of the while loop. Let $\mathbf a=(a_1,\dots,a_N)<_{lex} \mathbf s$ be maximal with respect to $<_{lex}$ such that $(d_1,\dots,d_N) = \mathbf a$ in some iteration of the while loop, and let $\mathbf b$ denote the sequence $(d_1,\dots,d_N)$ in the next iteration.
    
    By way of contradiction, we assume that $\mathbf s <_{lex} \mathbf b$. If $a_N < N$ and \Cref{align:two_step_potential} does not hold for $\mathbf a$, then $\mathbf b = (a_1,\dots, a_{N-1}, a_N+1) \leq_{lex} \mathbf s$, contradicting $\mathbf s <_{lex} \mathbf b$. So from now on assume that $a_N \geq N$ or that \Cref{align:two_step_potential} holds for $\mathbf a$. Let $k \leq N-1$ be maximal with $a_k < a_N$. Since $\mathbf a <_{lex} \mathbf s <_{lex} \mathbf b$, the three sequences $\mathbf a, \mathbf s$, and $\mathbf b$ must be of the form
    \begin{align*}
    \begin{matrix}
        \mathbf a &= (a_1, &\dots &a_{k-1}, &a_k, &a_N, &a_N, &\dots &a_N)\\ 
        \mathbf s &= (a_1, &\dots &a_{k-1}, &a_k, &s_{k+1}, &s_{k+2}, &\dots &s_N)\\ 
        \mathbf b &= (a_1, &\dots &a_{k-1}, &a_k+1, &a_k+1, &a_k+1, &\dots &a_k+1)
    \end{matrix}
    \end{align*}
    with $s_{k+1} \geq a_N$. This proves that $a_i \leq s_i$ for all $i \in [N]$ and $a_N < s_N$. Note that since \Cref{align:two_step_potential} does not hold for $\mathbf s$, it also does not hold for the component-wise smaller sequence $\mathbf a$. Finally, we also can not have $a_N\geq N$ since then $s_N \geq N$, which implies the desired contradiction.
\end{proof}

The second algorithm we need for the proof of \Cref{theorem:beta_3_for_high_sur} is \Cref{algorithm:completion_outcome}. Given a core, or more generally a \pcc\ $P$, and a first player $p$, it decides whether the Dominator wins on all completions of $P$ when $p$ starts the game. Roughly speaking, the algorithm proceeds by inserting edges to $P$ one-by-one and after each insertion checking if the Dominator already wins on all the completions using \Cref{algorithm:potential_outcome}. For the latter algorithm, we need the notion of a game state on a \pcc. Let $P$ be a \pcc\ and let 
\begin{align}\label{align:claimed_vertices_in_pccgs_high_deg}
    D, S \subseteq \{v \in V(P) : \deg_P(v) \geq d\}
\end{align}
be disjoint. Then the triple $(P,D,S)$ is called \emph{partially completed core game state on $P$} or \emph{\pccgs} for short. Note, that in the supporting code, we use \texttt{PCCGS}. The game state degree of a vertex of a \pccgs\ and the potential of a \pccgs\ are defined via the game state degree and the potential of its completions.
Let $G$ be a completion of the \pcc\ $P$ and let $v$ be a vertex of $P$. Then the \emph{game state degree of $v$ with respect to $(P,D,S)$} and the \emph{potential} of the \pccgs\ $(P,D,S)$ are defined by
\[
    \pccgsdeg_{(P,D,S)}(v) = \gsdeg_{(G,D,S)}(v) \text{ and } \pccpot(P,D,S) = \pot(G,D,S).
\]
Because of \Cref{align:claimed_vertices_in_pccgs_high_deg}, $D$ and $S$ only contain vertices $v$ with $N_P(v) = N_G(v)$, so that both quantities are well-defined as they do not depend on the completion $G$. Similarly, as for \pcc s if $(P_1,D,S)$ and $(P_2, D,S)$ are \pccgs s such that $P_2$ is a (partial) completion of $P_1$, then $(P_2,D,S)$ is a (partial) completion of $(P_1, D,S)$.

Before stating \Cref{algorithm:potential_outcome} and proving its correctness, note that the main structure closely follows that of \Cref{algorithm:outcome_game_state}. Let us informally discuss some differences between these algorithms. The main difference is that we are given a \pccgs\ $(P,D,S)$ instead of a game state. In other words, some edges between vertices $v\in V(P)$ with $\deg_P(v) < d$ are “hidden”. Accordingly, we cannot always guarantee to decide whether the Dominator wins on all completions of $(P,D,S)$ without making assumptions about these hidden edges, in which case the algorithm returns \unsure. It also has as a consequence that the Dominator is only allowed to pick vertices $v$ of degree $\deg_P(v) \geq d$, since these are vertices whose neighbourhood $N_P(v)$ is the same in all completions of $P$. Finally, in the case of the Staller claiming a vertex $v \in V(P)$ with $\deg_P(v) < d$, the Erd\H os-Selfridge potential in completions of $(P,D,S)$ may depend on how we complete the hidden edges containing $v$. In this case, we must give an upper bound for the potential by assuming the potential-wise worst hidden neighbours of $v$. 

\begin{algorithm}[H]
    \caption{\textsc{PotentialOutcome}$((P,D,S),p)$}
    \label{algorithm:potential_outcome}
    \begin{algorithmic}[1]
        \REQUIRE A \pccgs\ $(P,D,S)$ and a first player $p \in \{\mathcal D, \mathcal S\}$
        \ENSURE Returns $\mathcal D$, $\mathcal S$ or \unsure. If it returns $\mathcal D$ then $o_p(G,D,S) = \mathcal D$ for every completion $G$ of $P$. If it returns $\mathcal S$ then $o_p(G,D,S) = \mathcal S$ for some completion $G$ of $P$. If $P$ is its own completion, that is $\mindeg(P) = d$, then it does not return \unsure. 
        \STATE $L \gets \{u \in V(P) : \deg_{P}(u) < d\}$ \COMMENT{uncompleted vertices}
        \FORALL{$v \in V(P)$}
            \STATE \textbf{if} $v \in L$ and $|L \setminus N_P[v]| + \deg_P(v) < d$ \textbf{then} \textbf{return} $\mathcal D$ \label{algorithm_line:potential_outcome_no_completion}
            \STATE \textbf{if} $\pccgsdeg_{(P,D,S)}(v) = 0$ \textbf{then} \textbf{return} $\mathcal S$ \label{algorithm_line:potential_outcome_Staller_already_won}
        \ENDFOR
        \IF{$p = \mathcal D$}
            \STATE \textbf{if} $\pccpot(P,D,S) < 1$ \textbf{then} \textbf{return} $\mathcal D$ \label{algorithm_line:potential_outcome_Dominator_potreturn}
            \STATE \textbf{if} $L \cup D \cup S = V(G)$ and $L \neq \emptyset$ \textbf{then} \textbf{return} \unsure \label{algorithm_line:potential_outcome_Dominator_has_no_move_unsure}
            \STATE $uFlag \gets$ \textbf{false}
            \FORALL{$v \in V(P) \setminus (L \cup D \cup S)$}
                \STATE $out \gets \textsc{PotentialOutcome}((P, D \cup \{v\}, S), \mathcal S)$
                \STATE \textbf{if} $out = \mathcal D$ \textbf{then} \textbf{return} $\mathcal D$
                \STATE \textbf{if} $out = \unsure$ \textbf{then} $uFlag \gets$ \textbf{true}
            \ENDFOR
            \STATE \textbf{if} $uFlag$ \textbf{then} \textbf{return} \unsure\ \textbf{else} \textbf{return} $\mathcal S$ \label{algorithm_line:potential_outcome_Dominator_final_return}
        \ELSIF{$p = \mathcal S$}
            \STATE \textbf{if} $\pccpot(P,D,S) < \frac{1}{2}$ \textbf{then} \textbf{return} $\mathcal D$ \label{algorithm_line:potential_outcome_Staller_potreturn}
            \FORALL{$v \in L$} \label{algorithm_line:potential_outcome_Staller_L_loop}
                \STATE Let $l_1,\dots,l_m$ be pairwise distinct such that $\{l_1,\dots,l_m\} = L \setminus N_P[v]$ and \label{algorithm_line:potential_outcome_Staller_L_loop_l_def}
                \Statex \quad\quad $\pccgsdeg_{(P,D,S)}(l_1) \leq \dots \leq \pccgsdeg_{(P,D,S)}(l_m)$
                \STATE $N \gets N_P[v] \cup \{l_1, \dots, l_{d - \deg_P(v)}\}$
                \STATE \textbf{if} $\pccpot(P,D,S) + \sum_{u \in N} 2^{-\pccgsdeg_{(P,D,S)}(u)} \geq 1$ \textbf{then} \textbf{return} \unsure\label{algorithm_line:potential_outcome_Staller_L_loop_return}
            \ENDFOR
            \STATE $uFlag \gets$ \textbf{false}
            \FORALL{$v \in V(P) \setminus (L \cup D \cup S)$}
                \STATE $out \gets \textsc{PotentialOutcome}((P, D, S \cup \{v\}), \mathcal D)$ \label{algorithm_line:potential_outcome_Staller_rec_call}
                \STATE \textbf{if} $out = \mathcal S$ \textbf{then} \textbf{return} $\mathcal S$ \label{algorithm_line:potential_outcome_Staller_return_after_rec_call}
                \STATE \textbf{if} $out = \unsure$ \textbf{then} $uFlag \gets$ \textbf{true}
            \ENDFOR
            \STATE \textbf{if} $uFlag$ \textbf{then} \textbf{return} \unsure\ \textbf{else} \textbf{return} $\mathcal D$ \label{algorithm_line:potential_outcome_Staller_final_return}
        \ENDIF
    \end{algorithmic}
\end{algorithm}

\begin{proof}[Proof of correctness of \Cref{algorithm:potential_outcome}]
    Let $(P,D,S)$ be a \pccgs, let $p \in \{\mathcal S, \mathcal D\}$, and let $L = \{u \in V(P) : \deg_{P}(u) < d\}$. If there is $v\in L$ with $|L \setminus N_P[v]| + \deg_P(v)< d$, then there is no completion of $P$, and hence we may return $\mathcal D$ in \lineref{algorithm_line:potential_outcome_no_completion}. Now assume there is a completion $G$ of $P$. Fix an arbitrary such completion $G$ for the rest of the proof. If there is $v \in V(P)$ with $\gsdeg_{(G,D,S)}(v) = \pccgsdeg_{(P,D,S)}(v)= 0$, then the Staller wins on $(G,D,S)$ regardless of the first player, and we return $\mathcal S$ in \lineref{algorithm_line:potential_outcome_Staller_already_won}. From now on, assume $\pccgsdeg_{(P,D,S)}(v) > 0$ for all $v \in V(P)$ and $|L \setminus N_P[v]| + \deg_P(v)\geq d$ for all $v \in L$.
    
    As in the proof of correctness of \Cref{algorithm:outcome_game_state}, we use induction. But before, it is convenient to split off a lemma that we use both in the induction base and the induction step. 

    \begin{lemma}\label{lemma:potential_outcome_low_deg_vtx_lemma}
        If the algorithm reaches the for loop in \Cref{algorithm_line:potential_outcome_Staller_L_loop} but does not return \unsure\ in \Cref{algorithm_line:potential_outcome_Staller_L_loop_return}, then the Dominator wins on $(G,D,S\cup \{v\})$ going first for all $v \in L$.
    \end{lemma}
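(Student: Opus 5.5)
The plan is to fix $v \in L$ and an arbitrary completion $G$ of $P$. The goal is to bound $\pot(G,D,S\cup\{v\})$ from above by the quantity tested in \Cref{algorithm_line:potential_outcome_Staller_L_loop_return}. Since that test did not return \unsure, the quantity is less than $1$. \Cref{theorem:ES_for_gs} then gives $o_{\mathcal D}(G,D,S\cup\{v\}) = \mathcal D$, which is the claim.

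First I will use \Cref{align:pot_change}: claiming $v$ for the Staller increases the potential by exactly
\[
\pot_{(G,D,S)}(v) = \sum_{u\in N_G[v]} 2^{-\gsdeg_{(G,D,S)}(u)}.
\]
(The display in the proof of \Cref{theorem:ES_for_gs} writes $\gsdeg(v)$ in the exponent, but $u$ is meant.) Together with $\pot(G,D,S)=\pccpot(P,D,S)$ and $\gsdeg_{(G,D,S)}(u)=\pccgsdeg_{(P,D,S)}(u)$, which hold by definition, this reduces the task to showing
\[
\sum_{u\in N_G[v]} 2^{-\pccgsdeg(u)} \le \sum_{u\in N} 2^{-\pccgsdeg(u)},
\]
where $N = N_P[v]\cup\{l_1,\dots,l_{d-\deg_P(v)}\}$ is as in the algorithm.

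Next I will describe $N_G[v]$. Because $P$ is a partially completed core and $G$ is a completion of it, every edge of $E(G)\setminus E(P)$ joins two vertices of degree $d$ in $G$. Such edges only touch vertices that are still deficient in $P$, i.e. vertices of $L$. As $v\in L$ has degree exactly $d$ in $G$, we get $N_G[v] = N_P[v] \cup M$ with $M\subseteq L\setminus N_P[v]$ and $|M| = d-\deg_P(v)$.

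The sum over $N_P[v]$ is common to both sides. For the remaining terms, $l_1,\dots,l_m$ are sorted by nondecreasing game state degree, so $2^{-\pccgsdeg(l_j)}$ is nonincreasing in $j$. Hence among all subsets of $L\setminus N_P[v]$ of size $d-\deg_P(v)$, the first $d-\deg_P(v)$ of the $l_j$ maximize the sum, and the sum over $M$ is at most the sum over $\{l_1,\dots,l_{d-\deg_P(v)}\}$. The list is long enough because we may assume $|L\setminus N_P[v]|+\deg_P(v)\ge d$ at this point. This gives the required inequality.

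The one subtle point is that the game state degree of a vertex $u\in L$ in $(G,D,S)$ really equals $\pccgsdeg_{(P,D,S)}(u)$ independently of the completion, since $u$'s neighbourhood changes with $G$. The definition in the paper asserts this; it holds because only vertices of $P$-degree $\ge d$ are ever claimed, so $D$ and $S$ contain no vertex of $L$. Hence whether $N_G[u]$ meets $D$, and the size of $N_G[u]\cap S$, are determined by the fixed neighbourhoods of claimed vertices, which do not depend on $G$. I expect this well-definedness step to be the main thing to state carefully. The inequality itself is a routine greedy comparison.
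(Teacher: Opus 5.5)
Your proposal is correct and follows essentially the same route as the paper: both write $\pot(G,D,S\cup\{v\})$ as $\pccpot(P,D,S)$ plus the contributions of $N_P[v]$ and of the hidden neighbours $N_G(v)\setminus N_P(v)\subseteq\{l_1,\dots,l_m\}$, bound the latter by the sum over $l_1,\dots,l_{d-\deg_P(v)}$ using the sorting, and conclude with \Cref{theorem:ES_for_gs}. Beyond the paper, you make explicit why there are exactly $d-\deg_P(v)$ hidden neighbours (reducedness of $G$ and $\deg_G(v)=d$) and why the game state degrees do not depend on the completion, both of which the paper leaves implicit.
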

    \begin{proof}
        Let $v \in L$ and let $l_1,\dots,l_m \in L$ be such as in \Cref{algorithm_line:potential_outcome_Staller_L_loop_l_def}, that is, they are pairwise different with
    \[
        \{l_1,\dots,l_m\} = L \setminus N_P[v] \ \text{ and } \ \pccgsdeg_{(P,D,S)}(l_1) \leq \dots \leq \pccgsdeg_{(P,D,S)}(l_m).
    \]
    Note that $m = |L \setminus N_P[v]| \geq d - \deg_P(v)$ and let $N =  N_P[v] \cup \{l_1, \dots, l_{d - \deg_P(v)}\}$. Then 
    \begin{align*}
        \pot(G,D,S\cup \{v\}) =& \pot(G,D,S) + \sum_{u \in N_{G}[v]} 2^{-\gsdeg_{(G,D,S)}(u)}\\ 
        =& \pccpot(P,D,S) + \sum_{u \in N_P[v]} 2^{-\pccgsdeg_{(P,D,S)}(u)}\\ 
        &+ \sum_{u \in N_G(v) \setminus N_P(v)}2^{-\pccgsdeg_{(P,D,S)}(u)}
    \end{align*}
    The latter sum can be bounded by $\sum_{i = 1}^{d- \deg_P(v)}2^{-\pccgsdeg_{(P,D,S)}(l_i)}$ since $N_G(v) \setminus N_P(v) \subseteq \{l_1,\dots,l_m\}$ so that all in all we get 
    \[\pot(G,D,S\cup \{v\}) \leq \pccpot(P,D,S) + \sum_{u \in N} 2^{-\pccgsdeg_{(P,D,S)}(u)}.\]
    In case we never return \unsure\ in \Cref{algorithm_line:potential_outcome_Staller_L_loop_return}, the latter is less than $1$ so that indeed the Dominator wins on $(G,D,S\cup \{v\})$ going first by the Erd\H os-Selfridge Theorem.
    \end{proof}

    Let us now proceed by induction on $n := |V(P) \setminus (L \cup D \cup S)|$. First consider the base case $n = 0$. If $L = \emptyset$, that is, $P$ is its own completion, then $\pccpot(P,D,S) \in \Z_{\geq 0}$ and thus $\pccpot(P,D,S) = 0$ since $\pccgsdeg_{(P,D,S)}(v)> 0$ for all $v \in V(G)$. Regardless of the first player $p$, the algorithm returns $\mathcal D$ in \Cref{algorithm_line:potential_outcome_Dominator_potreturn} or \Cref{algorithm_line:potential_outcome_Staller_potreturn}, and indeed the Dominator wins on $(G,D,S) = (P,D,S)$. Now if $L \neq \emptyset$ and $p = \mathcal D$, then we return \unsure\ in \Cref{algorithm_line:potential_outcome_Dominator_has_no_move_unsure}, which is allowed since $P$ is not its own completion. Finally, if $L \neq \emptyset$ and $p = \mathcal S$, then by \Cref{lemma:potential_outcome_low_deg_vtx_lemma} the algorithm returns $\mathcal D$ in \Cref{algorithm_line:potential_outcome_Staller_final_return} only if he wins on $(G,D,S)$. If we do not return $\mathcal D$ in \Cref{algorithm_line:potential_outcome_Staller_final_return}, then returning \unsure\ in \Cref{algorithm_line:potential_outcome_Staller_L_loop_return} is allowed since $L \neq \emptyset$.

    For the induction step, assume that $n > 0$ and that \Cref{algorithm:potential_outcome} matches its specification for \pccgs s $(P',D',S')$ with $|V(P') \setminus (L' \cup D' \cup S')| < n$, where $L' = \{u \in V(P') : \deg_{P'}(u) < d\}$. Let us only do the case $p = \mathcal S$. The other case is almost analogous. If $\pot(G,D,S) = \pccpot(P,D,S) < \frac 12$, then the algorithm returns $\mathcal D$ in \Cref{algorithm_line:potential_outcome_Staller_potreturn}, and indeed the Dominator wins on $(G,D,S)$ going second. If we return \unsure\ in \Cref{algorithm_line:potential_outcome_Staller_L_loop_return}, then $L \neq \emptyset$ so that indeed $P$ can not be its own completion. By \Cref{lemma:potential_outcome_low_deg_vtx_lemma}, we can now assume that the Dominator wins on $(G,D,S\cup \{v\})$ going first for all $v \in L$.
    
    For a vertex $v \in V(P) \setminus (L \cup D \cup S)=:R$ let $o(v) = \textsc{PotentialOutcome}((P, D, S \cup \{v\}), \mathcal D)$, and note that the recursive call in \Cref{algorithm_line:potential_outcome_Staller_rec_call} correctly computes $o(v)$ by the induction hypothesis. If $o(v) = \mathcal S$ for some $v \in R$, then the algorithm returns $\mathcal S$ in \Cref{algorithm_line:potential_outcome_Staller_return_after_rec_call}, and indeed she wins on $(G,D,S)$ going first. If $o(v) = \mathcal D$ for all $v \in R$, the algorithm returns $\mathcal D$, and indeed for all $w \in V(P) \setminus (D \cup S)$ the Dominator wins on $(G,D,S\cup \{w\})$ going first. Otherwise, the algorithm returns \unsure\ and we must have $o(v) = \unsure$ for some $v \in R$, in which case we indeed have $L \neq \emptyset$ by the induction hypothesis. 
\end{proof}

\begin{algorithm}[H]
    \caption{\textsc{CompletionOutcome}$(P, p)$}
    \label{algorithm:completion_outcome}
    \begin{algorithmic}[1]
        \REQUIRE A \pcc\ $P$ and a first player $p \in \{\mathcal D, \mathcal S\}$
        \ENSURE Returns $\mathcal D$ if $o_p(G) = \mathcal D$ for every completion $G$ of $P$; returns $\mathcal S$ otherwise.
        \STATE $S \gets \{P\}$ \label{algorithm_line:compl_out_while_loop}
        \WHILE{$S \neq \emptyset$}
            \STATE $S' \gets S$
            \STATE $S \gets \emptyset$
            \FORALL{$P' \in S'$} \label{algorithm_line:compl_out_for_loop}
                \STATE $out \gets \textsc{PotentialOutcome}((P', \emptyset, \emptyset), p)$ \label{algorithm_line:compl_out_potout_call}
                \STATE \textbf{if} $out = \mathcal S$ \textbf{then} \textbf{return} $\mathcal S$ \label{algorithm_line:compl_out_return_S}
                \IF{$out = \unsure$}
                    \STATE $L \gets \{u \in V(P') : \deg_{P'}(u) < d\}$
                    \STATE Let $v \in L$ \label{algorithm_line:compl_out_pick_low_deg_vtx}
                    \FORALL{$w \in L \setminus N_{P'}[v]$} \label{algorithm_line:compl_out_for_loop_over_potlow_deg_neigh}
                        \STATE $S \gets S \cup \big\{(V(P'), E(P') \cup \{vw\})\big\}$ \label{algorithm_line:compl_out_add_new_compl_to_S}
                    \ENDFOR
                \ENDIF
            \ENDFOR
        \ENDWHILE
        \STATE \textbf{return} $\mathcal D$ \label{algorithm_line:compl_out_return_D}
    \end{algorithmic}
\end{algorithm}
\begin{proof}[Proof of correctness of \Cref{algorithm:completion_outcome}]
    In this proof we always assume that $p$ starts the MBD-game. First, note that every completion of $P$ has 
    \[
        |E(P)| + \frac12\sum_{v \in V(P)} \max\{0, d - \deg_{P}(v)\}
    \]
    edges. For a completion $G$ of $P$, let $m = |E(G)| - |E(P)|$, and for $n \geq 0$ denote by $S_n$ the set $S$ of \pcc s after the $n$-th iteration of the while loop in \lineref{algorithm_line:compl_out_while_loop}, so that in particular $S_0 = \{P\}$. In case the algorithm returns before the $n$-th iteration of the while loop ends or if it never reaches the $n$-th iteration of the while loop, $S_n$ is undefined. Note that if $S_n$ is defined, it consists of partial completions of $P$ that have $|E(P)| + n$ edges. This means that if we return $\mathcal S$ in \lineref{algorithm_line:compl_out_return_S}, it must be because $\textsc{PotentialOutcome}((P',\emptyset,\emptyset), p) = \mathcal S$ for some partial completion $P'$ of $P$. In this case it follows from the specification of \lineref{algorithm:potential_outcome} that we indeed have a completion of $P'$, and thus of $P$, on which the Staller wins.

    We now show by induction on $n$ that if $S_n$ is defined and if $G$ is a completion of $P$ on which the Dominator loses, then there is $P_2 \in S_n$ such that $G$ is a completion of $P_2$. The base case $n=0$ is trivial, so let us consider the case $n \geq 1$.

    Let $G$ be a completion of $P$ on which the Dominator loses, and assume that $S_n$ is defined. By the induction hypothesis, there is a $P_1\in S_{n-1}$ such that $G$ is a completion of $P_1$. Consider the $n$-th iteration of the while loop. In \lineref{algorithm_line:compl_out_for_loop} we run over all the elements $P'$ of $S_{n-1}$. Consider the iteration of the for loop in which we run over $P' = P_1$. Since $P_1$ has a completion, namely $G$, on which the Dominator loses, we cannot have $out = \mathcal D$ in \lineref{algorithm_line:compl_out_potout_call} so that $out \in \{\mathcal S, \unsure\}$. Since $S_n$ is defined, we do not return $\mathcal S$ in \lineref{algorithm_line:compl_out_return_S} so that $out = \unsure$. The vertex $v$ we pick in \lineref{algorithm_line:compl_out_pick_low_deg_vtx} has degree less than $d$ in $P_1$. In $G$ it thus has a neighbour $w \in N_G(v) \setminus N_{P_1}(v)$ that we iterate over in the for loop in \lineref{algorithm_line:compl_out_for_loop_over_potlow_deg_neigh}. Therefore, $G$ is a completion of the \pcc\ $P_2=(V(P_1), E(P_1) \cup \{vw\})$. This $P_2$ is added to $S$ in \lineref{algorithm_line:compl_out_add_new_compl_to_S} and thus is an element of $S_n$, which finishes the induction step. 

    We already dealt with the case of the algorithm returning $\mathcal S$ in \lineref{algorithm_line:compl_out_return_S}, so let us assume it does not. The number of iterations of the while loop is bounded by $m+1$, since $S_{m+1} = \emptyset$ in case it is defined. In particular, $S_n = \emptyset$ for some $n \geq 1$, so that by the above observation there can not be a completion of $P$ on which the Dominator loses. The algorithm also returns $\mathcal D$ in \lineref{algorithm_line:compl_out_return_D}.
\end{proof}

\begin{proof}[Proof of \Cref{theorem:beta_3_for_high_sur}] 
    By \Cref{lemma:reduction_to_reduced_graphs} and \Cref{theorem:ES_for_gs} we only have to deal with reduced graphs of minimum degree $d=3$, surplus at least $s=9$, and $16 \leq N \leq 21$ vertices.
    Running \Cref{algorithm:sequence_generator} with this $d$ and $s$ and all $16 \leq N \leq 21$, we obtain a list of $118$ degree sequences, which can be found in \texttt{input\_sequences.txt}.
    We bunch up these degree sequences into $38$ classes such that for every class there is a single \nauty\ command that generates all cores of graphs corresponding to this given degree sequence. These \nauty\ commands can be found in \texttt{cores\_cmds.txt}.
    Using \Cref{algorithm:completion_outcome}, we check all cores generated by all of these \nauty\ commands in \texttt{generate\_and\_check.sh}.
    In order to parallelize, all core generations are split into blocks as provided by \geng. In total \pgfmathprintnumber{3756845840} cores need to be checked which needs around 3184 hours of computation.
\end{proof}

\addcontentsline{toc}{section}{Acknowledgments}
\section*{Acknowledgements} Jakob Führer, Paul Hametner and Oliver Roche-Newton were supported by the Austrian Science Fund FWF Project 10.55776/PAT2559123.
This article is partially based upon work from COST Action CA22145, supported by COST (European Cooperation in Science and Technology).

This research was funded in part by the Austrian Science Fund (FWF) [10.55776/PAT2559123]. For the purpose of open access, the authors have applied a CC BY public copyright licence to any Author Accepted Manuscript version arising from this submission.

We sincerely thank Valentin Gledel for sharing his insights with us concerning the problems in this paper, and especially for showing us his construction giving the upper bound for $\beta(d)$ in \Cref{cor:g-bound}, which appears for the first time in this paper, and for which we claim no credit. We are very grateful to Qi Jiayue for introducing us to the Maker Breaker Domination Game and for many helpful conversations on the topic, and to Patrick Mederitsch for some earlier help with attempts to solve this problem via coding. We also thank Krishnendu Bhowmick, Laura Dilly and Miriam Patry for many helpful conversations.

A variant of this work was published as Paul Hametner's master's thesis at Johannes Kepler University Linz.

\bibliographystyle{plainurl}
\bibliography{references}

@article{
    BDGLP,
    author = "Guillaume Bagan and Eric Duchêne and Valentin Gledel and Tuomo Lehtilä and Aline Parreau",
    title = "Partition strategies for the Maker-Breaker domination game",
    journal = "Algorithmica",
    volume = "87",
    pages = "191--222",
    year = "2025",
    doi = "10.1007/s00453-024-01280-x"
}

@article {
    BKR,
    AUTHOR = {Bre\v{s}ar, Bo\v{s}tjan and Klav\v{z}ar, Sandi and Rall, Douglas F.},
    TITLE = {Domination game and an imagination strategy},
    JOURNAL = {SIAM Journal on Discrete Mathematics},
    VOLUME = {24},
    YEAR = {2010},
    NUMBER = {3},
    PAGES = {979--991},
    DOI = {10.1137/100786800},
}

@article{
    DGPR,
    author = "Eric Duchêne and Valentin Gledel and Aline Parreau and Gabriel Renault",
    title = "Maker-breaker domination game",
    journal = "Discrete Math.",
    volume = "343",
    pages = "12 pp.",
    year = "2020",
    doi = "10.1016/j.disc.2020.111955"
}

@article{
    ErdösSelfridge,
    author = {Erdős, Paul and Selfridge, John L.},
    title = {On a combinatorial game},
    journal = {Journal of Combinatorial Theory, Series A},
    volume = {14},
    number = {3},
    pages = {298--301},
    year = {1973},
    doi = {10.1016/0097-3165(73)90005-8}
}

@book{
    HKSS,
    author = {Hefetz, Dan and Krivelevich, Michael and Stojakovi\'{c}, Milo\v{s} and Szab\'{o}, Tibor},
    title = {Positional games},
    series = {Oberwolfach Seminars},
    volume = {44},
    publisher = {Springer},
    address = {Basel},
    year = {2014},
    doi = {10.1007/978-3-0348-0825-5}
}

@article{
    McKayPiperno2014,
    author = {Brendan D. McKay and Adolfo Piperno},
    title = {Practical graph isomorphism, {II}},
    journal = {Journal of Symbolic Computation},
    volume = {60},
    pages = {94--112},
    year = {2014},
    doi = {10.1016/j.jsc.2013.09.003},
}

@article{
    Tutte_1953,
    author = {Tutte, William T.},
    title = {The 1-factors of oriented graphs},
    journal = {Proceedings of the American Mathematical Society},
    volume = {4},
    number = {6},
    pages = {922--931},
    year = {1953},
    doi = {10.1090/s0002-9939-1953-0063009-7}    
}

@article{
    Tutte1947,
    author = {William T. Tutte},    
    title = {The Factorization of Linear Graphs},
    journal = {Journal of The London Mathematical Society-second Series},
    year = {1947},
    pages = {107--111},
    doi = {10.1112/jlms/s1-22.2.107}
}

@book{
    Akiyama_Kano_book,
    author = "Jin Akiyama and Mikio Kano",
    title = "Factors and factorizations of graphs: Proof techniques in factor theory",
    pages = "1--367",
    year = "2011",
    series = "Lecture Notes in Mathematics",
    publisher = "Springer Verlag",
    doi = "10.1007/978-3-642-21919-1"
}

@misc{repo,
  author = {Jakob Führer and Georg Grasegger and Paul Hametner and Oliver Roche-Newton},
  title = {Code for the minimum degree question for the Maker Breaker Domination Game},
  howpublished = {https://github.com/jakobfuehrer/MBDChecker},
  year = {2026}
}

\end{document}